\newtheorem{theorem}{Theorem}[section]
\newtheorem{lemma}{Lemma}[section]
\newtheorem{definition}{Definition}[section]
\newtheorem{example}{Example}[section]
\newtheorem{assumption}{Assumption}[section]
\newtheorem{remark}{Remark}[section]
\newenvironment{proof}{{\noindent \bf Proof:}}{\hfill$\Box$\medskip}
\definecolor{lred}{rgb}{1,0.8,0.8}
\definecolor{lblue}{rgb}{0.8,0.8,1}
\definecolor{dred}{rgb}{0.6,0,0}
\definecolor{dblue}{rgb}{0,0,0.5}
\definecolor{dgreen}{rgb}{0,0.5,0.5}
 \title{A proximal dual semismooth Newton method for computing zero-norm penalized QR estimator}
 \author{Dongdong Zhang\footnote{(mathzdd@mail.scut.edu.cn) School of Mathematics, SCUT, Guangzhou, China.}
 \ \ Shaohua Pan\footnote{(shhpan@scut.edu.cn) School of Mathematics, South China University of Technology,
 China.}\ \ {\rm and}\ \
 Shujun Bi\footnote{(bishj@scut.edu.cn) School of Mathematics, South China University of Technology, China.}}
\begin{document}

 \maketitle

 \begin{abstract}
  This paper is concerned with the computation of the high-dimensional
  zero-norm penalized quantile regression estimator, defined as a global
  minimizer of the zero-norm penalized check loss function.
  To seek a desirable approximation to the estimator, we reformulate this
  NP-hard problem as an equivalent augmented Lipschitz optimization problem,
  and exploit its coupled structure to propose a multi-stage convex relaxation
  approach (MSCRA\_PPA), each step of which solves inexactly a weighted
  $\ell_1$-regularized check loss minimization problem with a proximal
  dual semismooth Newton method. Under a restricted strong convexity condition,
  we provide the theoretical guarantee for the MSCRA\_PPA
  by establishing the error bound of each iterate to the true estimator and
  the rate of linear convergence in a statistical sense. Numerical comparisons
  on some synthetic and real data show that
  MSCRA\_PPA not only has comparable even better estimation performance,
  but also requires much less CPU time.
 \end{abstract}

 \noindent
 {\bf Keywords}: High-dimensional; Zero-norm penalized quantile regression;
 Variable selection; Proximal dual semismooth Newton method

%-----------------------------------------------------------------------------------------------
\section{Introduction}\label{sec1}

  Sparse penalized regression has become a popular approach for high-dimensional
  data analysis. In the past two decades, many classes of sparse penalized
  regressions have been developed by imposing a suitable penalty term on
  the least squares loss such as the bridge penalty in \cite{Frank93},
  Lasso in \cite{Tibshirani96}, SCAD in \cite{Fan01}, elastic net in \cite{Zou05},
  adaptive lasso by \cite{Zou06}, and so on. We refer to the survey papers by
  \cite{Bickel06} and \cite{Fan10} for the references. These penalties,
  as a convex surrogate (say, $\ell_1$-norm) or a nonconvex approximation
  (say, the bridge penalty) to the zero-norm, essentially try to capture
  the performance of the zero-norm, first used in the best subsect selection
  by \cite{Breiman96}. The sparse least squares regression approach is useful,
  but it only focuses on the central tendency of the conditional distribution.
  It is known that a certain covariate may not have significant influence on
  the mean value of the response but may have a strong effect on the upper quantile
  of the conditional distribution due to the heterogeneity of data.
  It is likely that a covariate has different effects at different segments
  of the conditional distribution. As illustrated by \cite{Koenker78},
  for non-Gaussian error distributions, the least squares regression is substantially
  out-performed by the quantile regression (QR).

  \medskip

  Inspired by this, many researchers recently have considered the QR
  introduced by \cite{Koenker78} for high-dimensional data analysis,
  owing to its robustness to outliers and its ability to offer unique
  insights into the relation between the response variable and the covariates;
  see, e.g., \cite{Wu09,Belloni11,Wang12,Wang13,Fan14a,Fan14b}. \cite{Belloni11}
  focused on the theory of the $\ell_1$-penalized QR and showed that
  this estimator is consistent at the near-oracle rate and provided
  the conditions under which the selected model includes the true model;
  \cite{Wang13} studied the $\ell_1$-penalized least absolute derivation (LAD) regression
  and verified that the estimator has near oracle performance with a high probability;
  and \cite{Fan14a} studied the weighted $\ell_1$-penalized QR and
  established the model selection oracle property and the asymptotic normality
  for this estimator. For nonconvex penalty-type QRs, \cite{Wu09} under
  mild conditions achieved the asymptotic oracle property of the SCAD and
  adaptive-Lasso penalized QRs, and \cite{Wang12} showed that
  with probability approaching one, the oracle estimator is a local optimal
  solution to the SCAD or MCP penalized QRs of ultra-high dimensionality.
  We notice that the above results are all established for the asymptotic case $n\to\infty$.

  \medskip

  Besides the above theoretical works, there are some works concerned with
  the computation of (weighted) $\ell_1$-penalized QR estimators which,
  compared to the (weighted) $\ell_1$-least-squares estimator, requires more
  sophisticated algorithms due to the piecewise linearity of the check loss function.
  Although the $\ell_1$-penalized QR model can be transformed into a linear program (LP)
  by introducing additional variables and one may use the interior point method
  (IPM) softwares such as SeDuMi in \cite{Sturm99} to solve it, this is limited to
  the small or medium scale case; see Figure \ref{fig1}-\ref{fig2} in Section \ref{sec5}.
  Inspired by this, \cite{Wu08} proposed a greedy coordinate descent algorithm for
  the $\ell_1$-penalized LAD regression, \cite{Yi17} proposed a semismooth Newton
  coordinate descent algorithm for the elastic-net penalized QR, and \cite{Gu18}
  recently developed a semi-proximal alternating direction method of multipliers (sPADMM)
  and a combined version of ADMM and coordinate descent method (which is actually an inexact
  ADMM) for solving the weighted $\ell_1$-penalized QR. In addition, for nonconvex penalized QRs,
  \cite{Peng15} developed an iterative coordinate descent algorithm
  and established the convergence of any subsequence to a stationary point,
  and \cite{Fan14b} provided a systematic study for folded concave
  penalized regressions, including the SCAD and MCP penalized QRs as special cases,
  and showed that with high probability the oracle estimator can be obtained within
  two iterations of the local linear approximation (LLA) approach proposed by \cite{Zou08}.
  We find that \cite{Peng15} and \cite{Fan14b} did not establish the error bound
  of the iterates to the true solution.

  \medskip

  This work is interested in the computation of the high-dimensional zero-norm
  penalized QR estimator, a global minimizer of the zero-norm regularized
  check loss. To seek a high-quality approximation to this estimator,
  we reformulate this NP-hard problem as a mathematical program with an equilibrium
  constraint (MPEC), and obtain an equivalent augmented Lipschitz optimization problem
  from the global exact penalty of the MPEC. This augmented problem not only has a favorable
  coupled structure but also implies an equivalent DC (difference of convex)
  surrogate for the zero-norm regularized check loss minimization; see Section \ref{sec2}.
  By solving the augmented Lipschitz problem in an alternating way,
  we propose in Section \ref{sec3} an MSCRA to compute a desirable surrogate
  for the zero-norm penalized QR estimator. Similar to the LLA method owing to
  \cite{Zou08}, the MSCRA solves in each step a weighted $\ell_1$-regularized
  check loss minimization, but the subproblems are allowed to be solved inexactly.
  Under a mild restricted strong convexity condition, we provide its theoretical
  guarantee in Section \ref{sec4} by establishing the error bound
  of each iterate to the true estimator and the rate of linear convergence
  in a statistical sense.

  \medskip

  Motivated by the recent work \cite{Tang19}, we also develop a proximal
  dual semismooth Newton method (PDSN) in Section \ref{sec5} for solving
  the subproblems involved in the MSCRA. Different from the semismooth Newton method
  by \cite{Yi17}, this is a proximal point algorithm (PPA) with the subproblems
  solved by applying the semismooth Newton method to their duals,
  rather than to a smooth approximation to the elastic-net penalized check
  loss minimization problem. Numerical comparisons are made on some synthetic
  and real data for MSCRA\_PPA, MSCRA\_IPM and MSCRA\_ADMM, which are the MSCRA
  with the subproblems solved by PDSN, SeDuMi in \cite{Sturm99} and
  semi-proximal ADMM in \cite{Gu18}, respectively. We find that
  MSCRA\_IPM and MSCRA\_ADMM have very similar performance, while MSCRA\_PPA
  not only has a comparable estimation performance with the two methods
  but also requires only one-fifteenth of the CPU time required by
  MSCRA\_ADMM and MSCRA\_IPM.

  \medskip

%%-----------------------------------------------------------------------------------
  Throughout this paper, $I$ and $e$ denote an identity matrix
  and a vector of all ones, whose dimensions are known from the context.
  For an $x\in\mathbb{R}^p$, write $|x|:=(|x_1|,\ldots,|x_p|)^{\mathbb{T}}$
  and ${\rm sign}(x):=({\rm sign}(x_1),\ldots,{\rm sign}(x_p))^{\mathbb{T}}$,
  and denote by $\|x\|_1, \|x\|$ and $\|x\|_\infty$ the $l_1$-norm,
  $l_2$-norm and $l_\infty$-norm of $x$, respectively. For a matrix $A\in\mathbb{R}^{n\times p}$,
  $\|A\|,\|A\|_{\max}$ and $\|A\|_1$ respectively denote the spectral norm,
  element-wise maximum norm, and maximum column sum norm of $A$.
  For a set $S$, $\mathbb{I}_{S}$ means the characteristic function on $S$,
  i.e., $\mathbb{I}_{S}(z)=1$ if $z\in S$, otherwise $\mathbb{I}_{S}(z)=0$.
  For given $a,b\in\mathbb{R}^p$ with $a_i\le b_i$ for $i=1,\ldots,p$,
  $[a,b]$ means the box set. For an extended real-valued function
  $f\!:\mathbb{R}^p\to(-\infty,+\infty]$,
  write ${\rm dom}\,f:=\{x\in\mathbb{R}^p\ |\ f(x)<\infty\}$,
  and denote $\mathcal{P}_{\gamma}f$ and $e_{\gamma}f$ for a given $\gamma>0$
  by the proximal mapping and Moreau envelope of $f$, defined as
  $\mathcal{P}_{\gamma}f(x):=\mathop{\arg\min}_{z\in\mathbb{R}^p}\big\{f(z)+\frac{1}{2\gamma}\|z-x\|^2\big\}$
  and $e_{\gamma}f(x):=\min_{z\in\mathbb{R}^p}\big\{f(z)+\frac{1}{2\gamma}\|z-x\|^2\big\}$.
  In the sequel, we write $\mathcal{P}\!f$ for $\mathcal{P}_{1}f$.
  When $f$ is convex, $\mathcal{P}_{\gamma}f\!:\mathbb{R}^p\to\mathbb{R}^p$
  is a Lipschitz mapping with modulus $1$, and $e_{\gamma}f$ is
  a smooth convex function with $\nabla e_{\gamma}f(x)=\gamma^{-1}(x-\mathcal{P}_{\gamma}f(x))$.

%---------------------------------------------------------------------------------
 \section{Zero-norm penalized quantile regression and equivalent difference of convex model}\label{sec2}

 Quantile regression is a popular method for studying the influence of
 a set of covariates on the conditional distribution of a response variable,
 and has been widely used to handle heteroscedasticity; see \cite{Koenker82}
 and \cite{Wang12}. For a univariate response ${\bf Y}$ and
 a vector of covariates ${\bf X}\in\mathbb{R}^p$,
 the conditional cumulative distribution function of ${\bf Y}$ is defined as
 $F_{\bf Y}(t|x):={\rm Pr}({\bf Y}\leq t\ |\ {\bf X}=x)$, and the $\tau$th conditional
 quantile of ${\bf Y}$ is given by
 \(
   Q_{\bf Y}(\tau|x):=\inf\big\{t\!: F_{\bf Y}(t|x)\geq \tau\big\}.
 \)
 Let $X\!=[x_1\ \cdots\ x_n]^{\mathbb{T}}$ be an $n\times p$ design matrix
 on ${\bf X}$. Consider the linear quantile regression
 \begin{equation}\label{QR-model}
   y = X\beta^* +\varepsilon
 \end{equation}
 where $y=(y_1,\ldots,y_n)^{\mathbb{T}}\in\mathbb{R}^n$ is the response vector, $\varepsilon=(\varepsilon_1,\ldots,\varepsilon_n)^{\mathbb{T}}$
 is the noise vector whose components are independently distributed and
 satisfy ${\rm Pr}(\varepsilon_i\le 0|x_i)=\tau$ for some known constant $\tau\in(0,1)$,
 and $\beta^*\in\mathbb{R}^p$ is the true but unknown coefficient vector.
 This quantile regression model actually assumes that
 $Q_{\bf Y}(\tau|x_i) = x_i^{\mathbb{T}}\beta^*$ for $i=1,\ldots,n$.
 We are interested in the high-dimensional case where $p>n$ and the sparse model
 in the sense that only $s^*(\ll p)$ components of the unknown true $\beta^*$ are nonzero.

 \medskip

 For $\tau\in\!(0,1)$, let $f_{\tau}\!:\mathbb{R}^n\to\mathbb{R}$
 be the check loss function of \eqref{QR-model}, i.e.,
 \begin{equation}\label{ftau}
  f_{\tau}(z):=n^{-1}{\textstyle\sum_{i=1}^n}\theta_{\tau}(z_i)
  \ \ {\rm with}\ \ \theta_{\tau}(u):=(\tau-\mathbb{I}_{\{u\le 0\}})u
 \end{equation}
 which was first introduced by \cite{Koenker78}.
 To estimate the unknown true $\beta^*$ in \eqref{QR-model}, we consider
 the zero-norm regularized problem
 \begin{equation}\label{prob}
   \widehat{\beta}(\tau)\in\mathop{\arg\min}_{\beta\in\mathbb{R}^p}
   \Big\{\nu f_{\tau}(y-\!X\beta)+\|\beta\|_0\Big\}
  \end{equation}
  where $\nu>0$ is the regularization parameter, and $\|\beta\|_0$ denotes the zero-norm
  of $\beta$ (i.e., the number of nonzero entries of $\beta$). By the expression of $f_{\tau}$,
  $f_{\tau}$ is nonnegative and coercive (i.e., $f_{\tau}(\beta^k)\to +\infty$
  whenever $\|\beta^k\|\to\infty$). By Lemma 3 in Appendix A,
  the estimator $\widehat{\beta}(\tau)$ is well defined.
  Since $\widehat{\beta}(\tau)$ depends on $\tau$, there is a great possibility
  for model \eqref{prob} to monitor different ``locations'' of the conditional distribution,
  and then the heteroscedasticity of the data, when existing,
  can be inspected by solving \eqref{prob} with different $\tau\in(0,1)$.
  For the simplicity, in the sequel we use $\widehat{\beta}$ to
  replace $\widehat{\beta}(\tau)$, and for a given $\tau\in(0,1)$, write
  $\underline{\tau}:=\min(\tau,1\!-\!\tau)$ and $\overline{\tau}:=\max(\tau,1\!-\!\tau)$.

  \medskip

  Due to the combination of the zero-norm, the computation of $\widehat{\beta}$
  is NP-hard. To design an algorithm in the next section for seeking
  a high-quality approximation to $\widehat{\beta}$, we next derive an equivalent
  augmented Lipschitz optimization problem from a primal-dual viewpoint,
  and to demonstrate that such a mechanism provides a unified way to yield
  equivalent DC surrogates for the zero-norm regularized problem \eqref{prob},
  we introduce a  family of proper lsc convex functions on $\mathbb{R}$,
  denoted by $\mathscr{L}$, satisfying the conditions:
  \begin{equation}\label{phi-assump}
   {\rm int}({\rm dom}\,\phi)\supseteq[0,1],\
   t^*\!:=\mathop{\arg\min}_{0\le t\le 1}\phi(t),\ \phi(t^*)=0
   \ \ {\rm and}\ \ \phi(1)=1.
  \end{equation}
  With a $\phi\in\!\mathscr{L}$, clearly, the zero-norm $\|z\|_0$ is the optimal value function of
  \begin{equation*}
   \min_{w\in\mathbb{R}^p}\Big\{{\textstyle\sum_{i=1}^p}\phi(w_i)\quad\mbox{s.t.}
   \ \ \langle e-w,|z|\rangle=0,\,0\le w\leq e\Big\}.
  \end{equation*}
  This characterization of zero-norm shows that model \eqref{prob} is equivalent to
 \begin{equation}\label{Eprob}\small
  \min_{\beta\in\mathbb{R}^p,w\in\mathbb{R}^p}\bigg\{\nu f_{\tau}(y-\!X\beta)
   +\sum_{i=1}^p\phi(w_i)\quad\mbox{s.t.}\ \ \langle e\!-w,|\beta|\rangle=0,\,0\le w\leq e\bigg\}
 \end{equation}
 in the following sense: if $\overline{\beta}$ is globally optimal to \eqref{prob},
 then $(\overline{\beta}\!,{\rm sign}(|\overline{\beta}|))$ is a global optimal
 solution of problem \eqref{Eprob}, and conversely, if $(\overline{\beta},\overline{w})$
 is a global optimal solution of \eqref{Eprob}, then $\overline{\beta}$ is
 globally optimal to \eqref{prob}. Problem \eqref{Eprob} is a mathematical
 program with an equilibrium constraint $e-w\ge 0,|\beta|\ge 0$, $\langle e-w,|\beta|\rangle=0$
 (abbreviated as MPEC). The equivalence between \eqref{prob} and \eqref{Eprob}
 shows that the difficulty of model \eqref{prob} arises from
 the hidden equilibrium constraint. It is well known that
 the handling of nonconvex constraints is much harder than that of nonconvex
 objective functions. Then it is natural to consider the penalized version
 of problem \eqref{Eprob}
 \begin{equation}\label{Eprob-penalty}
  \min_{\beta\in\mathbb{R}^p,w\in[0,e]}\Big\{\nu f_{\tau}(y-\!X\beta)
     +\big[\textstyle{\sum_{i=1}^p}\phi(w_i)+\rho\langle e-w,|\beta|\rangle\big]\Big\}
 \end{equation}
 where $\rho>0$ is the penalty parameter. Since $\beta\mapsto\!f_{\tau}(y-\!X\beta)$
 is Lipschitz continuous, the following conclusion holds by Section 3.2 of \cite{LiuBiPan18}.
%-------------------------------------------------------------------------------------
 \begin{theorem}\label{theorem-epenalty}
  The problem \eqref{Eprob-penalty} associated to each
  $\rho>\overline{\rho}:=\frac{ \phi_{-}'(1)(1-t^*)\overline{\tau}\nu\|X\|}{1-t_0}$
  has
  the same global optimal solution set as the MPEC \eqref{Eprob} does,
  where $t^0$ is the minimum element in $[t^*,1)$ such that
  $\frac{1}{1-t^*}\in\partial\phi(t_0)$.
 \end{theorem}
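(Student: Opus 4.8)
The plan is to prove this as a global exact penalty statement: I will show that for $\rho$ large enough every global minimizer of \eqref{Eprob-penalty} automatically satisfies the equilibrium constraint $\langle e-w,|\beta|\rangle=0$, so that on the optimal set the penalized objective and the MPEC objective coincide. The easy half of the equivalence is immediate. Since the box constraint $0\le w\le e$ is retained in \eqref{Eprob-penalty} and the penalty term $\rho\langle e-w,|\beta|\rangle$ vanishes on the feasible set of \eqref{Eprob}, every MPEC-feasible pair is feasible for \eqref{Eprob-penalty} with the identical objective value; hence the optimal value of \eqref{Eprob-penalty} is no larger than that of \eqref{Eprob}, and every global solution of \eqref{Eprob} remains a global solution of \eqref{Eprob-penalty}. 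The substance lies in the reverse inclusion.

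First I would fix $\beta$ and analyze the inner minimization over $w\in[0,e]$. Because the objective is separable and each $\phi\in\mathscr{L}$ is proper lsc convex, this reduces to the coordinate-wise value function $g_\rho(s):=\min_{0\le t\le1}\{\phi(t)+\rho(1-t)s\}$ evaluated at $s=|\beta_i|$. Using $\phi(t^*)=0$ and $\phi(1)=1$ from \eqref{phi-assump}, one checks that $g_\rho(0)=0$ (attained at $t=t^*$), that $s\mapsto g_\rho(s)$ is concave and nondecreasing on $[0,\infty)$, and that, by convexity of $\phi$, the left-derivative condition $\rho s\ge\phi_{-}'(1)$ is exactly what makes $t=1$ the minimizer with $g_\rho(s)=1$. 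Thus the penalty term vanishes at coordinate $i$ and the optimal $w_i=1$ as soon as $|\beta_i|\ge\phi_{-}'(1)/\rho$; this is the mechanism by which the $w$-part of \eqref{Eprob-penalty} collapses to $\|\beta\|_0$ on the support, recovering the characterization of $\|z\|_0$ stated before the theorem.

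The decisive step is to bound the nonzero components of any global minimizer $(\overline\beta,\overline w)$ of \eqref{Eprob-penalty} away from zero. I would argue by contradiction: if some $\overline\beta_i\ne0$ were too small, replacing it by $0$ (and re-setting $\overline w_i=t^*$) would change the check-loss term by at most $\overline\tau\nu\|X\|\,|\overline\beta_i|$, its Lipschitz modulus times the perturbation, while decreasing the $w$-part by $g_\rho(|\overline\beta_i|)$; optimality therefore forces $g_\rho(|\overline\beta_i|)\le\overline\tau\nu\|X\|\,|\overline\beta_i|$. Since $g_\rho$ is concave with $g_\rho(0)=0$, the secant ratio $g_\rho(s)/s$ is nonincreasing, and parametrizing the interior regime of $g_\rho$ by the minimizer $t$ (with $\rho s\in\partial\phi(t)$) turns this inequality into a $\rho$-explicit bound governed by the convex-analytic data of $\phi$. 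Comparing with the chord slope $\tfrac{1}{1-t^*}$ of $\phi$ from $(t^*,0)$ to $(1,1)$—whose subdifferential inclusion $\tfrac{1}{1-t^*}\in\partial\phi(t_0)$ defines $t_0$—shows that any nonzero $|\overline\beta_i|$ below the positive constant $\tfrac{1-t_0}{(1-t^*)\overline\tau\nu\|X\|}$ would make the replacement strictly decrease the objective, a contradiction. Hence $\min_{i:\overline\beta_i\ne0}|\overline\beta_i|\ge\tfrac{1-t_0}{(1-t^*)\overline\tau\nu\|X\|}$.

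Combining the two thresholds closes the argument. For every $\rho>\overline\rho=\tfrac{\phi_{-}'(1)(1-t^*)\overline\tau\nu\|X\|}{1-t_0}$ the lower bound gives $\rho|\overline\beta_i|\ge\phi_{-}'(1)$ at each nonzero component, so by the second paragraph the optimal $\overline w_i=1$ there, while $\overline w_i$ may be taken feasible on the complement; consequently $\langle e-\overline w,|\overline\beta|\rangle=0$, i.e.\ $(\overline\beta,\overline w)$ is MPEC-feasible with matching objective value. This forces the two optimal values to be equal and the two global solution sets to coincide, which is the claim. I expect the main obstacle to be the third step: extracting the precise lower bound on the nonzero components and matching it to the exact constant in $\overline\rho$, since this is where the Lipschitz modulus of $f_\tau$ must be balanced quantitatively against the near-origin slope of the value function $g_\rho$ encoded by $t^*,t_0$ and $\phi_{-}'(1)$—precisely the computation carried out in Section~3.2 of \cite{LiuBiPan18}.
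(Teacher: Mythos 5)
The paper does not actually prove Theorem~\ref{theorem-epenalty}: it invokes the Lipschitz continuity of $\beta\mapsto f_{\tau}(y-X\beta)$ and cites Section~3.2 of \cite{LiuBiPan18} for the exact-penalty machinery. Your proposal reconstructs that argument from scratch, and its skeleton is the right one: (a) MPEC-feasible points carry over with unchanged objective; (b) the inner minimization over $w$ collapses to the coordinatewise value function $g_{\rho}(s)=\min_{0\le t\le1}\{\phi(t)+\rho(1-t)s\}$, which is concave, nondecreasing, vanishes at $0$, and equals $1$ with minimizer $t=1$ once $\rho s\ge\phi_{-}'(1)$; (c) zeroing a nonzero coordinate of a global minimizer and resetting $w_i=t^*$ costs at most the Lipschitz modulus $\overline{\tau}\nu\|X\|\,|\overline{\beta}_i|$ in the loss while saving $g_{\rho}(|\overline{\beta}_i|)$ in the penalty, so optimality forces $g_{\rho}(|\overline{\beta}_i|)\le\overline{\tau}\nu\|X\|\,|\overline{\beta}_i|$. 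All of this is correct. One small logical slip in the first paragraph: the claim that every global solution of \eqref{Eprob} ``remains'' globally optimal for \eqref{Eprob-penalty} does not follow from the inequality between optimal values alone; it needs the equality of values, which is only available after the hard half. Since you do establish the hard half, this is self-repairing, but the order of deductions should be fixed.

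The genuine soft spot is exactly where you flag it: the extraction of the threshold in step three. The route you sketch --- parametrizing the interior regime by $\rho s\in\partial\phi(t(s))$ and comparing with the chord slope $\frac{1}{1-t^*}$ at $t_0$ --- does not deliver the clean bound $|\overline{\beta}_i|\ge\frac{1-t_0}{(1-t^*)\overline{\tau}\nu\|X\|}$. Carrying it out via the subgradient inequality $\phi(t)\ge\phi(t_0)+\frac{1}{1-t^*}(t-t_0)$ together with $1-t(s)\le \overline{\tau}\nu\|X\|/\rho$ only yields $|\overline{\beta}_i|\ge\frac{1-t_0-\overline{\tau}\nu\|X\|/\rho}{(1-t^*)\overline{\tau}\nu\|X\|}$, whose combination with $\rho>\overline{\rho}$ leaves a residual term and forces a slightly larger penalty threshold than the one stated. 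The fix uses only the concavity you already established: since $g_{\rho}$ is concave with $g_{\rho}(0)=0$ and $g_{\rho}(s_1)=1$ at $s_1:=\phi_{-}'(1)/\rho$, the secant ratio gives $g_{\rho}(s)\ge s/s_1$ for all $s\in(0,s_1]$; if a nonzero $|\overline{\beta}_i|$ were $\le s_1$, the optimality inequality would force $\rho\le\phi_{-}'(1)\overline{\tau}\nu\|X\|$, which contradicts $\rho>\overline{\rho}$ because $t_0\ge t^*$ implies $\overline{\rho}\ge\phi_{-}'(1)\overline{\tau}\nu\|X\|$. Hence $\rho|\overline{\beta}_i|>\phi_{-}'(1)$ strictly at every nonzero coordinate, $t=1$ is the \emph{unique} minimizer of the inner problem there (strictness matters, or a global minimizer could pick $\overline{w}_i<1$ and break the solution-set equality), the equilibrium constraint holds, and the two optimal values and solution sets coincide. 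With this repair your argument is complete and is, in substance, the argument the paper delegates to \cite{LiuBiPan18}.
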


 Theorem \ref{theorem-epenalty} states that problem \eqref{Eprob-penalty}
 is a global exact penalty of \eqref{Eprob} in the sense that there is
 a threshold $\overline{\rho}>0$ such that the former associated
 to every $\rho>\overline{\rho}$ has the same global optimal solution set
 as the latter does. Together with the equivalence between \eqref{prob}
 and \eqref{Eprob}, model \eqref{prob} is equivalent to problem
 \eqref{Eprob-penalty}. Notice that the objective function of \eqref{Eprob-penalty}
 is globally Lipschitz continuous over its feasible set and its nonconvexity is owing to
 the coupled term $\langle e\!-\!w,|\beta|\rangle$ rather than the combination.
 So, problem \eqref{Eprob-penalty} provides an equivalent augmented
 Lipschitz reformulation for the zero-norm problem \eqref{prob}.
 In fact, problem \eqref{Eprob-penalty} associated to every $\rho>\overline{\rho}$
 implies an equivalent DC surrogate for \eqref{prob}. To illustrate this, let
 $\psi(t)=\phi(t)$ if $t\in[0,1]$ and otherwise $\phi(t)=+\infty$.
  Then, with the conjugate $\psi^*(s):=\sup_{t\in\mathbb{R}}\{st-\psi(t)\}$
  of $\psi$, one may check that \eqref{Eprob-penalty} is equivalent to
 \begin{equation}\label{Eprob-conj}
   \min_{\beta\in\mathbb{R}^p}\Big\{\Theta_{\nu,\rho}(\beta)
   :=f_{\tau}(y-\!X\beta)+\nu^{-1}{\textstyle\sum_{i=1}^p}\big[\rho|\beta_i|-\psi^*(\rho|\beta_i|)\big]\Big\}.
  \end{equation}
 Since $\psi^*$ is a nondecreasing finite convex function on $\mathbb{R}$,
 the function $s\mapsto\psi^*(\rho|s|)$ is convex, and problem \eqref{Eprob-conj}
 is a DC program. To sum up the above discussions, problem \eqref{Eprob-conj}
 associated to every $\rho>\overline{\rho}$ provides
 an equivalent DC surrogate for \eqref{prob}. Moreover,
 $H_{\rho}(\beta):=\sum_{i=1}^ph_{\rho}(\beta_i)$
 with $h_{\rho}(t):=\rho|t|-\psi^*(\rho|t|)$ for $t\in\mathbb{R}$
 is a DC surrogate for the zero-norm. To close this section,
 we present some examples of $\phi\in\mathscr{L}$.
 \begin{example}\label{examplephi1}
 Let $\phi(t)=t$ for $t\in\mathbb{R}$. After a simple computation, we have
 \[
    \psi^*(s)=\left\{\begin{array}{cl}
                 0   & {\rm if}\ s\leq1,\\
                 s-1 & {\rm if}\ s>1
             \end{array}\right.
~~~{\rm and}~~~~
    h_{\rho}(t)=\left\{\begin{array}{cl}
                 \rho|t|   & {\rm if}\ |t|\leq\frac{1}{\rho},\\
                 1         & {\rm if}\ |t|>\frac{1}{\rho}.
             \end{array}\right.
  \]
  It is immediate to see that the function $\nu^{-1}h_{\rho}(t)$ will reduce to
  the capped $\ell_1$-function $t\mapsto\lambda\min(|t|,\alpha)$ in \cite{ZhangT10} with
  $\nu=\rho/\lambda$ and $\rho=\alpha^{-1}$.
 \end{example}
 \begin{example}\label{examplephi2}
  Let $\phi(t):=\frac{a-1}{a+1}t^2+\frac{2}{a+1}t\ (a>1)$ for $t\in \mathbb{R}$.
  One can calculate
  \begin{align}\label{psi-star}
  \psi^*(s)&=\left\{\begin{array}{cl}
                      0 & \textrm{if}\ s\leq \frac{2}{a+1},\\
                      \frac{((a+1)s-2)^2}{4(a^2-1)} & \textrm{if}\ \frac{2}{a+1}<s\leq \frac{2a}{a+1},\\
                      s-1 & \textrm{if}\ s>\frac{2a}{a+1};
                \end{array}\right.\\
  h_{\rho}(t)&=\left\{\begin{array}{cl}
                      \rho|t| & \textrm{if}\ |t|\leq \frac{2}{(a+1)\rho},\\
                      \rho|t|-\frac{((a+1)\rho|t|-2)^2}{4(a^2-1)} & \textrm{if}\ \frac{2}{(a+1)\rho}<|t|\leq \frac{2a}{(a+1)\rho},\\
                      1 & \textrm{if}\ |t|>\frac{2a}{(a+1)\rho}.
                \end{array}\right.\nonumber
  \end{align}
  It is not hard to check that $\nu^{-1}h_{\rho}(t)$ will reduces to
  the SCAD function $\rho_{\lambda}(t)$ in \cite{Fan01}
  when $\nu=\frac{2}{(a+1)\lambda^2}$ and $\rho=\frac{2}{(a+1)\lambda}$.
  \end{example}
 \begin{example}\label{examplephi3}
  Let $\phi(t):=\frac{a^2}{4}t^2-\frac{a^2}{2}t+at+\frac{(a-2)^2}{4}\ (a>2)$ for $t\in\mathbb{R}$.
  We have
  \begin{align*}
  \psi^*(s)&=\left\{\begin{array}{cl}
                      -\frac{(a-2)^2}{4} & \textrm{if}\ s\leq  a-a^2/2,\\
                      \frac{1}{a^2}(\frac{a(a-2)}{2}+s)^2-\frac{(a-2)^2}{4}& \textrm{if}\  a-a^2/2 <s\leq a,\\
                      s-1 & \textrm{if}\ s>a;
                \end{array}\right.\\
  h_{\rho}(t)&=\left\{\begin{array}{cl}
                      \rho|t|-\frac{1}{a^2}(\frac{a(a-2)}{2}+\rho|t|)^2+\frac{(a-2)^2}{4}& \textrm{if}\  |t|\leq {a}/{\rho},\\
                      1 & \textrm{if}\ |t|>{a}/{\rho}.
                \end{array}\right.
  \end{align*}
  The $\nu^{-1}h_{\rho}(t)$ will reduce to the MCP in \cite{Zhang10}
  if $\nu=\frac{2}{ a\lambda^2 },\rho=\frac{1}{\lambda}$.
  \end{example}
%-------------------------------------------------------------------------------------
 \section{Multi-stage convex relaxation approach }\label{sec3}

  From the last section, to compute the estimator $\widehat{\beta}$,
  we only need to solve a single penalty problem \eqref{Eprob-penalty}
  that is much easier than the zero-norm problem \eqref{prob}
  because its nonconvexity only arises from the coupled term $\langle w,|\beta|\rangle$.
  Observe that \eqref{Eprob-penalty} becomes a convex program when
  either of $w$ and $\beta$ is fixed. So, we solve it in an alternating way
  and propose the following multi-stage convex relaxation approach (MSCRA)
  with $\phi$ in Example \ref{examplephi2}.
%-------------------------------------------------------------------------------------
 \begin{algorithm}[h]
 \caption{\label{Alg}{\bf(MSCRA for computing $\widehat{\beta}$)}}
 \textbf{Initialization:} Choose $\tau\in(0,1), \nu>0,\rho_0=1,w^0\!\in[0,\frac{1}{2}e]$.
                          Set $\lambda=\frac{\rho_0}{\nu}$.\\
 \textbf{for} $k=1,2,\ldots.$
 \vspace{-0.3cm}
 \begin{enumerate}
  \item  Seek an inexact solution to the weighted $\ell_1$-regularized problem
         \vspace{-0.3cm}
         \begin{equation}\label{subprob-betak}
         \vspace{-0.3cm}
          \beta^k\approx\mathop{\arg\min}_{\beta\in\mathbb{R}^p}
           \Big\{f_{\tau}(y-\!X\beta)+\lambda\,{\textstyle\sum_{i=1}^p}(1\!-\!w_i^{k-1})|\beta_{i}|\Big\}.
         \end{equation}
  \item When $k=1$, select a suitable $\rho_1\ge\rho_0$ in terms of $\|\beta^1\|_\infty$.
        If $k=2,3$, select $\rho_k$ such that $\rho_k\ge\rho_{k-1}$;
        otherwise, set $\rho_k=\rho_{k-1}$.
  \item For $i=1,2,\ldots,p$, compute the following minimization problem
        \vspace{-0.3cm}
        \begin{equation}\label{subprob-wk}
        \vspace{-0.3cm}
              w_i^k=\mathop{\rm arg\min}_{0\le w_i\le 1}
              \left\{\phi(w_i)-\rho_k w_i|\beta^k_{i}|\right\}.
        \end{equation}
  \end{enumerate}
  \vspace{-0.3cm}
  \textbf{end for}
\end{algorithm}
 \begin{remark}\label{remark-Alg}
 {\bf(i)} Step 1 of Algorithm \ref{Alg} is solving problem \eqref{Eprob-penalty}
  with $w$ fixed to be $w^{k-1}$, while Step 3 is solving this problem
  with $\beta$ fixed to be $\beta^k$; that is, Algorithm \ref{Alg} is solving
  the nonconvex penalty problem \eqref{Eprob-penalty} in an alternating way.
  In the first stage, since there is no any information
  on estimating the nonzero entries of $\beta^*$, it is reasonable to impose
  an unbiased weight on each component of $\beta$. Motivated by this, we restrict
  the initial $w^0$ in $[0,0.5e]$, a subset of the feasible set of $w$.
  When $w^0=0$, the first stage is precisely the minimization of
  the $\ell_1$-penalized check loss function. Although the threshold $\overline{\rho}$
  is known when the parameter $\nu$ in \eqref{prob} is given, we select
  a varying $\rho$ for \eqref{subprob-wk} since it is just a relaxation
  of \eqref{Eprob-penalty}.

  \noindent
  {\bf(ii)} By the optimality condition of \eqref{subprob-wk},
  $\rho_k|\beta_i^k|\in\partial\psi(w_i^k)$ for each $i$, which by Theorem 23.5
  in \cite{Roc70} and \eqref{psi-star} is equivalent to saying
  \begin{equation}\label{wik}
   w_i^{k}=\min\Big[1,\max\Big(0,\frac{(a+1)\rho_k|\beta_i^{k}|-2}{2(a-1)}\Big)\Big]
   \ \ {\rm for}\ i=1,\ldots,p.
  \end{equation}
  Clearly, when $\rho_k|\beta_i^{k}|$ is close to $0$, $(1\!-\!w_i^k)$ in \eqref{wik}
  may not equal $1$ though close to $1$; when $\rho_k|\beta_i^{k}|$ is very larger,
  $(1\!-\!w_i^k)$ in \eqref{wik} may not equal $0$ though close to $0$.
  To achieve a high-quality solution with Algorithm \ref{Alg}, the last term of \eqref{subprob-betak}
  implies that a smaller $(1\!-\!w_i^{k-1})$ but not $0$ is expected for those larger $|\beta_i|$,
  and a larger $(1\!-\!w_i^{k-1})$ instead of $1$ is expected for those smaller $|\beta_i|$.
  Thus, the function $\phi$ in Example \ref{examplephi2} is desirable especially
  for those problems whose solutions have small nonzero entries.
  The weight $w^k$ associated to the function $\phi$ in Example \ref{examplephi3}
  has a similar performance, but the weight $w^k$ associated to the function
  $\phi$ in Example \ref{examplephi1} is different since $w_i^k=0$ if
  $\rho_k|\beta_i^{k}|<1$, $w_i^k=1$ if $\rho_k|\beta_i^{k}|>1$, otherwise
  $w_i^k\in[0,1]$.

 \noindent
 {\bf(iii)} Algorithm \ref{Alg} is actually an inexact majorization-minimization (MM) method
 (see \cite{Lange00}) for solving the equivalent DC surrogate \eqref{Eprob-conj} with
 a special starting point. Indeed, for a given $\beta'\in\mathbb{R}^p$,
  the convexity and smoothness of $\psi^*$ implies that
  with $w_i=(\psi^*)'(\rho|\beta_i'|)$ for $i=1,\ldots,p$,
  \begin{equation}\label{ineq-psistar}
   \sum_{i=1}^p\psi^*(\rho|\beta_i|)\ge\sum_{i=1}^p\psi^*(\rho|\beta_i'|)
   +\rho\langle w,|\beta|-|\beta'|\rangle\quad\ \forall \beta\in\mathbb{R}^p.
  \end{equation}
  Notice that each $w_i\in[0,1]$ by the expression of $\psi^*$.
  Hence, the function
  \[
    f_{\tau}(y-\!X\beta)+\lambda\big\|(e-\!w^{k-1})\circ \beta\big\|_1
    -\lambda\big[\sum_{i=1}^p\psi^*(\rho|\beta_i^{k-1}|)+\rho\langle w^{k-1},|\beta^{k-1}|\rangle\big]
  \]
  is a majorization of $\Theta_{\lambda,\rho}$ at $\beta^{k-1}$ and
  the subproblem \eqref{subprob-betak} is the inexact minimization of
  this majorization function. Also, for any given $\rho_0>0$,
  when $\|\beta^0\|_{\infty}\le\frac{2}{(a+1)\rho_0}$,
  we have $w_i^0=(\psi^*)'(\rho_0|\beta_i^0|)=0$ by \eqref{psi-star}.
  Thus, the first stage of Algorithm \ref{Alg} with $w^0=0$ is precisely
  the inexact MM method for \eqref{Eprob-conj} with $\beta^0$ satisfying
  $\|\beta^0\|_{\infty}\le\frac{2}{(a+1)\rho_0}$. In addition, Algorithm \ref{Alg}
  can be regarded as an inexact inversion of the LLA method proposed
  by \cite{Zou08} for \eqref{Eprob-conj}, but it is different from
  the DC algorithm by \cite{Wu09} since
  the latter depends on the majorization of
  $\beta\mapsto {\textstyle\sum_{i=1}^p}\psi^*(\rho|\beta_i|)$ at $\beta^k$
  and the obtained approximation is lack of symmetry.

 \noindent
 {\bf(iv)} Considering that practical computation always involves deviation,
  we allow the problem in \eqref{subprob-betak} to be solved inexactly with
  the accuracy measured in the following way:
  $\exists\delta^{k}\in\mathbb{R}^p$ and $r_k\ge 0$ with
  $\|\delta^{k}\|\le r_k$ such that
  \begin{align}\label{inexact-inclusion}
   \delta^{k}&\in\partial\big[f_{\tau}(y-\!X\beta)+\lambda\|(e-\!w^{k-1})\circ \beta\|_1\big]_{\beta=\beta^{k}}\nonumber\\
   &=-X^{\mathbb{T}}\partial\!f_{\tau}(y\!-\!X\beta^k)+\lambda\big[(1\!-\!w_1^{k-1})\partial|\beta_1^k|
     \times\cdots\times(1\!-\!w_p^{k-1})\partial|\beta_p^k|\big]
  \end{align}
  where the equality is by Theorem 23.8 in \cite{Roc70}.
  Notice that the first-order optimality conditions of \eqref{Eprob-penalty}
  take the following form
  \begin{align*}
   u\in\partial\!f_{\tau}(z);\ \rho|\beta_i|\in\partial\psi(w_i)\ {\rm for}\ i=1,\ldots,p;\
   y-\!X\beta-z=0;\\
   X^{\mathbb{T}}u\in\lambda\big[(1\!-\!w_1)\partial|\beta_1|
     \times\cdots\times(1\!-\!w_p)\partial|\beta_p|\big],\qquad\quad
  \end{align*}
  where $u\in\mathbb{R}^n$ is the Lagrange multiplier associated to
  $y-X\beta-z=0$. By Step 2 of Algorithm \ref{Alg},
  $\rho_k|\beta^k|\in\partial\psi(w_1^k)\times\cdots\times\partial\psi(w_p^k)$.
  In view of this, we measure the KKT residual of \eqref{Eprob-penalty}
  associated to $\rho_k$ at $(\beta^k,z^k,u^k)$ by
  \begin{equation}\label{errk}
   {\bf Err}_k:=\frac{\sqrt{\|\Delta_1\|^2+\|\Delta_2^k\|^2+\|y-\!X\beta^k-\!z^k\|^2}}{1+\|y\|}
   \le{\rm tol}
  \end{equation}
  where $\Delta_1^k:=z^k-\mathcal{P}\!f_{\tau}(z^k+u^k)$
  and $\Delta_2^k:=X^{\mathbb{T}}u^k-\mathcal{P}h_{k}(X^{\mathbb{T}}u^k+\beta^k)$ with
  \begin{equation}\label{hkfun}
    h_{k}(\beta):=\|\lambda(e\!-\!w^k)\circ\beta\|_1
    \ \ {\rm for}\ \beta\in\mathbb{R}^p.
  \end{equation}
  \end{remark}

%---------------------------------------------------------------------------------------
 \section{Theoretical guarantees of Algorithm \ref{Alg}}\label{sec4}

We denote by $S^{*}$ the support of the true vector $\beta^{*}$,
 and define the set
 \[
  \mathcal {C}(S^{*}):=\bigcup_{S^{*}\subset S,|S|\le 1.5s^*}\!\Big\{\beta\in \mathbb{R}^p\!:
  \|\beta_{S^c}\|_1\le 3\|\beta_{S}\|_1\Big\}.
 \]
 The matrix $X$ is said to have the $\kappa$-restricted strong
 convexity on $\mathcal {C}(S^{*})$ if
 \begin{equation}\label{RSC}
  \kappa>0\ \ {\rm and}\ \
  \frac{1}{2n}\|X\Delta\beta\|^2\ge\kappa\|\Delta\beta\|^2\quad{\rm for\ all}\ \Delta\beta\in\mathcal{C}(S^*).
 \end{equation}
 The RSC is equivalent to the restricted eigenvalue condition of the Gram matrix
 $\frac{1}{2n}X^{\mathbb{T}}X$ due to \cite{van09} and \cite{Bickel09}.
 Notice that $\mathcal{C}(S^*)\supseteq
 \big\{\beta\in\mathbb{R}^p\!: \|\beta_{(S^*)^c}\|_1\le 3\|\beta_{S^*}\|_1\big\}$.
 This RSC is a little stronger than the one used by \cite{Negahban12}
 for the $\ell_1$-regularized smooth loss minimization. In this section,
 we shall provide the deterministic theoretical guarantees for Algorithm \ref{Alg}
 under this RSC, including the error bound of the iterate $\beta^k$ to the true
 $\beta^{*}$ and the decrease analysis of the error sequence.
 The proofs are all included in Appendix B. We need the following assumption
 on the optimality tolerance $r_k$ of $\beta^k$:
 \begin{assumption}\label{assump1}
  There exists $\epsilon>0$ such that for each $k\in\mathbb{N}$,
  $r_k\le\epsilon$.
 \end{assumption}

 First, by Lemma \ref{betak-lemma1} in Appendix B, we have the following error bound.
 %------------------------------------------------------------------------------
 \begin{theorem}\label{error-bound1}
  Suppose that Assumption \ref{assump1} holds, that $X$ has the $\kappa$-RSC over
  $\mathcal{C}(S^*)$, and that the noise vector $\varepsilon$ is nonzero. If $\rho_3$ and $\lambda$
  are chosen such that $\rho_3\le\frac{8}{9\sqrt{3}c\overline{\tau}\lambda\|\varepsilon\|_\infty}$
  and
  \(
  \lambda\in\Big[\frac{16\overline{\tau}\|X\|_1}{n}+8\epsilon ,
  \frac{ \underline{\tau}^2\kappa-c^{-1}-3\overline{\tau}\|X\|_{\rm max}(2n^{-1}\overline{\tau} \|X\|_1+\epsilon) s^*  }
  {3\overline{\tau}\|X\|_{\rm max} s^* }\Big]
  \)
  for some constant $$c\ge\frac{1}{\underline{\tau}^2\kappa-27 \overline{\tau}\|X\|_{\rm max}
  (2n^{-1}\overline{\tau}\|X\|_1+\epsilon) s^* },$$ then for every $k\in\mathbb{N}$
  \[
    \|\beta^{k}-\beta^*\|\le\frac{9c\overline{\tau}\lambda\sqrt{1.5s^*}}{8}\|\varepsilon\|_\infty.
  \]
 \end{theorem}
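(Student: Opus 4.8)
The plan is to prove the bound by induction on $k$: the right-hand side is independent of $k$, so I would aim to show that once $\|\beta^{k-1}-\beta^*\|$ obeys the bound, the data-dependent weights $w^{k-1}$ entering \eqref{subprob-betak} stay in a favorable regime, and this in turn reproduces the bound at stage $k$. Throughout I write $\Delta^k:=\beta^k-\beta^*$ and use $y-X\beta^*=\varepsilon$. The starting point is the inexact optimality \eqref{inexact-inclusion}: there is $\delta^k$ with $\|\delta^k\|\le r_k\le\epsilon$ (Assumption \ref{assump1}) in the subdifferential at $\beta^k$ of the convex map $F_k(\beta):=f_\tau(y-X\beta)+\lambda\sum_i(1-w_i^{k-1})|\beta_i|$. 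Convexity of $F_k$ and $\langle\delta^k,\beta^*-\beta^k\rangle\ge-\epsilon\|\Delta^k\|$ give the basic inequality
\[
 f_\tau(y-X\beta^k)-f_\tau(\varepsilon)\ \le\ \lambda\big[\|(e-w^{k-1})\circ\beta^*\|_1-\|(e-w^{k-1})\circ\beta^k\|_1\big]+\epsilon\|\Delta^k\|,
\]
and splitting the penalty difference over $S^*$ and $(S^*)^c$ (using $\beta^*_{(S^*)^c}=0$ and $0\le 1-w_i^{k-1}\le1$) bounds the right-hand side by $\lambda\|\Delta^k_{S^*}\|_1-\lambda\sum_{i\notin S^*}(1-w_i^{k-1})|\Delta^k_i|+\epsilon\|\Delta^k\|$.

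Next I would bound the left-hand side from below by a quadratic in $\Delta^k$. For any $\widehat\xi\in\partial f_\tau(\varepsilon)$ one has $\|\widehat\xi\|_\infty\le n^{-1}\overline{\tau}$, hence the linear estimate $f_\tau(y-X\beta^k)-f_\tau(\varepsilon)\ge-\langle X^{\mathbb{T}}\widehat\xi,\Delta^k\rangle$ with $\|X^{\mathbb{T}}\widehat\xi\|_\infty\le n^{-1}\overline{\tau}\|X\|_1$; but because $f_\tau$ is only piecewise linear, this alone carries no curvature. The essential point (the content of Lemma \ref{betak-lemma1}) is a restricted strong convexity of the check loss itself: using that $\varepsilon\ne0$, I would show that in the regime where $\|X\Delta^k\|_\infty$ is controlled by $\|\varepsilon\|_\infty$ the Knight-type remainder admits a deterministic lower bound of the form $f_\tau(y-X\beta^k)-f_\tau(\varepsilon)+\langle X^{\mathbb{T}}\widehat\xi,\Delta^k\rangle\gtrsim \underline{\tau}^2(2n\|\varepsilon\|_\infty)^{-1}\|X\Delta^k\|^2$. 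This is exactly where the factors $\underline{\tau}^2$ and $\|\varepsilon\|_\infty^{-1}$ and the nonzero-noise hypothesis enter. I expect this deterministic curvature estimate for the nonsmooth loss to be the hardest step, since the piecewise-linear Bregman remainder has no pointwise quadratic minorant and the argument must aggregate sign-crossings across coordinates.

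With the curvature estimate in hand I would feed it into the basic inequality and exploit the prescribed lower bound $\lambda\ge 16\overline{\tau}\|X\|_1/n+8\epsilon$: this lets the weighted $\ell_1$ penalty on $(S^*)^c$ absorb the linear terms $\langle X^{\mathbb{T}}\widehat\xi,\Delta^k\rangle$ and $\epsilon\|\Delta^k\|$, forcing the cone condition $\|\Delta^k_{(S^*)^c}\|_1\le 3\|\Delta^k_{S^*}\|_1$ after the standard enlargement of $S^*$ to a set of size $\le1.5s^*$ capturing the largest off-support coordinates; thus $\Delta^k\in\mathcal{C}(S^*)$. On this cone the $\kappa$-RSC \eqref{RSC} converts $\tfrac1{2n}\|X\Delta^k\|^2$ into $\kappa\|\Delta^k\|^2$, so the Step-2 bound becomes $\underline{\tau}^2\kappa\|\varepsilon\|_\infty^{-1}\|\Delta^k\|^2$ on the left.

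Finally I would close everything algebraically and inductively. Combining the previous steps and using $\|\Delta^k_{S^*}\|_1\le\sqrt{1.5s^*}\,\|\Delta^k\|$ yields an inequality of the shape $\big(\underline{\tau}^2\kappa-[\text{lower-order}]\big)\|\varepsilon\|_\infty^{-1}\|\Delta^k\|^2\le C\overline{\tau}\lambda\sqrt{1.5s^*}\,\|\Delta^k\|$, where the bracketed lower-order terms are precisely $c^{-1}+3\overline{\tau}\|X\|_{\max}(2n^{-1}\overline{\tau}\|X\|_1+\epsilon)s^*$; these are kept small by the upper bound imposed on $\lambda$, and the stated lower bound on $c$ guarantees the bracket stays positive. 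Dividing by $\|\Delta^k\|$ then produces the claimed estimate $\|\beta^k-\beta^*\|\le\frac{9c\overline{\tau}\lambda\sqrt{1.5s^*}}{8}\|\varepsilon\|_\infty$. To complete the induction I would check that this bound, together with $\rho_k=\rho_3\le\frac{8}{9\sqrt{3}c\overline{\tau}\lambda\|\varepsilon\|_\infty}$ for $k\ge3$, keeps each $\rho_k|\beta^k_i|$ small enough in \eqref{wik} that the weights $w^k$ remain in the regime required to rerun the argument at stage $k+1$, which is the coupling between the weights and the error bound that the induction is designed to resolve.
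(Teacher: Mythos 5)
Your overall architecture matches the paper's: induction on $k$, the basic inequality from the inexact optimality of $\beta^k$, a deterministic curvature bound for the check loss, the cone condition feeding the $\kappa$-RSC, and a closing step tying $\rho_k$ to the error bound so that the weights stay controlled. However, the two steps you leave open are exactly where the substance lies, and in both cases your sketch as written would not go through.

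First, the curvature estimate. You correctly flag it as the hardest step but supply no mechanism. The paper's device (Lemma 5 of Appendix B) is to write $\theta_\tau(z_i^k)-\theta_\tau(\varepsilon_i)=\frac{\theta_\tau^2(z_i^k)-\theta_\tau^2(\varepsilon_i)}{\theta_\tau(z_i^k)+\theta_\tau(\varepsilon_i)}$ on the indices where the denominator is nonzero, exploit that $\theta_\tau^2$ is smooth and strongly convex with modulus $2\underline{\tau}^2$, and bound the denominator by $\overline{\tau}(\|z^k\|_\infty+\|\varepsilon\|_\infty)$. The resulting lower bound is $\frac{\underline{\tau}^2\|X\Delta^k\|^2}{n\overline{\tau}(\|z^k\|_\infty+\|\varepsilon\|_\infty)}-\frac{2\overline{\tau}}{n}\|X\Delta^k\|_1$, not the $\underline{\tau}^2(2n\|\varepsilon\|_\infty)^{-1}\|X\Delta^k\|^2$ you posit: the denominator retains $\|z^k\|_\infty\le\|X\Delta^k\|_\infty+\|\varepsilon\|_\infty$, and eliminating it via $\|X\Delta^k\|_\infty\le 4\|X\|_{\rm max}\|\Delta^k_{S^{k-1}}\|_1$ is precisely what generates the $\overline{\tau}\|X\|_{\rm max}s^*$ terms in the denominator of Lemma 6's bound and in the admissible window for $\lambda$ and $c$. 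With your simpler guessed form those terms never appear, so your claim that the bracketed lower-order terms come out to exactly $c^{-1}+3\overline{\tau}\|X\|_{\rm max}(2n^{-1}\overline{\tau}\|X\|_1+\epsilon)s^*$ cannot be reached by the route you describe.

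Second, the induction closure. Keeping ``each $\rho_k|\beta_i^k|$ small enough that the weights $w^k$ remain in the regime'' is neither achievable nor what is needed: for $i\in S^*$ with $|\beta_i^*|$ bounded away from zero, $\rho_k|\beta_i^k|$ is necessarily large and $w_i^k$ is close to $1$, which is desirable. What the paper actually controls is the cardinality of the bad set: $w_i^l>\frac12$ forces $\rho_l|\beta_i^l|\ge 1$ by \eqref{wik}, so for $i\notin S^*$ this gives $\rho_l\big||\beta_i^l|-|\beta_i^*|\big|\ge 1$ and hence $|\{i\notin S^*: w_i^l>\frac12\}|\le\rho_l^2\|\beta^l-\beta^*\|^2\le 0.5s^*$ under the induction hypothesis and $\rho_l\lambda\le\rho_3\lambda\le\frac{8}{9\sqrt{3}c\overline{\tau}\|\varepsilon\|_\infty}$. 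The enlarged set is $S^l=S^*\cup\{i\notin S^*: w_i^l>\frac12\}$ (defined by the weights, not by the ``largest off-support coordinates''), and it is on its complement that $1-w_i^l\ge\frac12$ lets the penalty dominate in the cone argument. Without this counting step the induction does not close.
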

 \begin{remark}
 {\bf(i)} For the $\ell_1$-regularized least squares smooth loss estimator
 $$
   \beta^{\rm LS}\in\mathop{\arg\min}_{\beta\in\mathbb{R}^p}
   \Big\{\frac{1}{2n}\|y-\!X\beta\|^2+\lambda_n\|\beta\|_1\Big\},
 $$
 the error bound $\|\beta^{\rm LS}-\beta^*\|=O(\sigma\sqrt{s^*\log p/n})$
 was obtained in Corollary 2 of \cite{Negahban12} by taking
 $\lambda_n=\sqrt{\log p/n}$, where $\sigma>0$ represents the variance
 of the noise. By comparing with this error bound, the error bound
 in Theorem \ref{error-bound1} involves the infinite norm $\|\varepsilon\|_\infty$
 of noise $\varepsilon$ rather than its variance, and moreover,
 it still has the same order $O(\sqrt{s^*\log p/n})$ when the parameter
 $\lambda=O(1)$ in our model is rescaled to be $\lambda_n$.

 \noindent
 {\bf(ii)} For the following $\ell_1$-regularized square-root nonsmooth loss estimator
 $$
   \beta^{\rm sr}\in\mathop{\arg\min}_{\beta\in\mathbb{R}^p}
   \Big\{\frac{1}{\sqrt{n}}\|y-\!X\beta\|+\frac{\lambda'}{n}\|\beta\|_1\Big\},
 $$
 the error bound $\|\beta^{\rm sr}\!-\!\beta^*\|=O\big(\frac{\sigma\sqrt{s^*}\lambda'\varpi}{n}\big)$
 with $\varpi\ge\frac{1}{\sqrt{n}}\|\varepsilon\|$ was achieved in Theorem 1
 of \cite{Belloni110} by setting $\lambda'=O(n)$.
 By considering that $f_{\tau}(y-X\beta)=O(\sqrt{n}\|y-X\beta\|)$,
 the parameter $\lambda$ in our model corresponds to $\lambda'/n$.
 Thus, the error bound in Theorem \ref{error-bound1} corresponds to
 $O(\frac{\sqrt{s^*}\lambda'\|\varepsilon\|_\infty}{n})$, which has the same order
 as $O\big(\frac{\sigma\sqrt{s^*}\lambda'\varpi}{n}\big)$ since
 $\|\varepsilon\|_\infty=O(\frac{1}{\sqrt{n}}\|\varepsilon\|)$.

 \noindent
 {\bf(iii)} To ensure that the constant $c>0$ exists, the constant $\kappa$
 needs to satisfy $\kappa>\frac{54\overline{\tau}^2 s^*\|X\|_{\rm max}\|X\|_1}
 {n\underline{\tau}^2}$ and the inexact accuracy $\epsilon$ of $\beta^{k}$ needs to satisfy
 $$0\le\epsilon<\frac{n\underline{\tau}^2\kappa-54\overline{\tau}^2s^*\|X\|_{\rm max}\|X\|_1}{27n\overline{\tau} s^* }.$$
 Since $\|X\|_1=O(n)$, it is necessary to solve the subproblem \eqref{subprob-betak}
 with a very small inexact accuracy $\epsilon$.
 \end{remark}

 Theorem \ref{error-bound1} establishes an error bound for every iterate $\beta^k$,
 but it does not tell us if the error bound of the current $\beta^k$ is
 better than that of the previous $\beta^{k-1}$. In order to seek the answer,
 we study the decrease of the error bound sequence by bounding
 $\max_{i\in S^*}(1-w_i^k)$. For this purpose, write $F^0:=S^*$ and
 $\Lambda^0:=\{i\!:|\beta_i^*|\le \frac{4a}{(a+1)\rho_0}\}$,
 and for each $k\in\mathbb{N}$ define
 \begin{equation}\label{FLambdak}
  F^{k}:=\Big\{i\!: \big||\beta_i^k|-|\beta_i^*|\big|\ge\frac{1}{\rho_k}\Big\}
  \ {\rm and}\  \Lambda^k:=\Big\{i\!:|\beta_i^*|\le \frac{4a}{(a\!+\!1)\rho_k}\Big\}.
 \end{equation}
 From Lemma \ref{FLambdk} in Appendix B, the value $\max_{i\in S^*}(1-w_i^k)$
 is upper bounded by $$\max_{i\in S^*}\max(\mathbb{I}_{\Lambda^k}(i),\mathbb{I}_{F^k}(i)).$$
 By this, we have the following conclusion.
%----------------------------------------------------------------------------------
 \begin{theorem}\label{error-bound2}
  Suppose that Assumption \ref{assump1} holds, that $X$ has the $\kappa$-RSC over
  $\mathcal{C}(S^*)$, and that the noise $\varepsilon$ is nonzero.
  If $\lambda$ is chosen as in Theorem \ref{error-bound1} and
  the parameter $\rho_3$ satisfies
  $\rho_3 \leq \frac{1}{c\overline{\tau}\lambda\|\varepsilon\|_\infty(\sqrt{4.5s^*}+\!\sqrt{3}/8)}$,
  then for each $k\in\mathbb{N}$
  \begin{align}\label{betak}
   \!\|\beta^{k}\!-\beta^*\|
   &\le \frac{(3+\!\sqrt{3})c\overline{\tau}^2\sqrt{s^*}\|X\|_1\!\|\varepsilon\|_{\infty}}{n}
        +\frac{(3+\!3\sqrt{3})c\overline{\tau}\lambda\sqrt{s^*}\|\varepsilon\|_{\infty}}{2\sqrt{2}}
          \max_{i\in S^*}\mathbb{I}_{\Lambda^{0}}(i)\nonumber\\
  &\quad +c\overline{\tau}\|\varepsilon\|_{\infty}\sqrt{s^*}
           \sum_{j=0}^{k-2}r_{k-j}\Big(\frac{1}{\sqrt{3}}\Big)^{j}
          +\Big(\frac{1}{\sqrt{3}}\Big)^{k-1}\big\|\beta^{1}\!-\beta^*\big\|
 \end{align}
 where we stipulate that $\sum_{j=0}^{k-2}r_{k-j}(\frac{1}{\sqrt{3}})^{j}=0$ for $k=1$.
 \end{theorem}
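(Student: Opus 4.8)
The plan is to convert the per-iterate estimate underlying Theorem \ref{error-bound1} into a genuine one-step recursion for the error $e_k:=\|\beta^{k}-\beta^*\|$ and then unroll it. I would start from Lemma \ref{betak-lemma1} (the engine behind Theorem \ref{error-bound1}), which should deliver a one-step bound of the shape
\[
  \|\beta^{k}-\beta^*\|\le B+C\max_{i\in S^*}(1-w_i^{k-1})+D\,r_k,
\]
where $B=O(\overline{\tau}^2\sqrt{s^*}\|X\|_1\|\varepsilon\|_\infty/n)$ is the statistical noise floor, $C=O(\overline{\tau}\lambda\sqrt{s^*}\|\varepsilon\|_\infty)$ is the coefficient attached to the weight on the support, and $D\,r_k$ is the inexactness contribution at stage $k$. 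Replacing the weight by its trivial bound $1$ recovers the uniform estimate of Theorem \ref{error-bound1}; the refinement here is to show that $\max_{i\in S^*}(1-w_i^{k-1})$ is in fact small.

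To exploit this I would invoke Lemma \ref{FLambdk}, giving
\(
  \max_{i\in S^*}(1-w_i^{k-1})\le\max_{i\in S^*}\mathbb{I}_{\Lambda^{k-1}}(i)+\max_{i\in S^*}\mathbb{I}_{F^{k-1}}(i),
\)
and treat the two indicator terms separately. For the $\Lambda$-term, since $\rho_k$ is nondecreasing and stabilizes at $\rho_3$, the threshold $\frac{4a}{(a+1)\rho_{k-1}}$ is nonincreasing, so $\Lambda^{k-1}\subseteq\Lambda^{0}$ and $\max_{i\in S^*}\mathbb{I}_{\Lambda^{k-1}}(i)\le\max_{i\in S^*}\mathbb{I}_{\Lambda^{0}}(i)$, which produces the stationary $\mathbb{I}_{\Lambda^{0}}$ term. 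For the $F$-term, I would use the elementary inequality $\mathbb{I}_{\{t\ge 1/\rho_{k-1}\}}\le\rho_{k-1}t$ for $t\ge 0$ with $t=\big||\beta_i^{k-1}|-|\beta_i^*|\big|$, together with $\big||\beta_i^{k-1}|-|\beta_i^*|\big|\le|\beta_i^{k-1}-\beta_i^*|\le\|\beta^{k-1}-\beta^*\|$, to obtain $\max_{i\in S^*}\mathbb{I}_{F^{k-1}}(i)\le\rho_{k-1}\|\beta^{k-1}-\beta^*\|$. Substituting both estimates yields
\[
  \|\beta^{k}-\beta^*\|\le B+C\max_{i\in S^*}\mathbb{I}_{\Lambda^{0}}(i)+C\rho_{k-1}\|\beta^{k-1}-\beta^*\|+D\,r_k.
\]

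The decisive step is the choice of $\rho_3$. Since $\rho_{k-1}\le\rho_3$ for every $k$, the hypothesis $\rho_3\le\frac{1}{c\overline{\tau}\lambda\|\varepsilon\|_\infty(\sqrt{4.5s^*}+\sqrt{3}/8)}$ is calibrated so that $C\rho_{k-1}\le\tfrac{1}{\sqrt3}$, turning the inequality into the contraction $e_k\le\big[B+C\max_{i\in S^*}\mathbb{I}_{\Lambda^{0}}(i)+D\,r_k\big]+\tfrac{1}{\sqrt3}\,e_{k-1}$ for $k\ge2$. Iterating this recursion down to $e_1=\|\beta^{1}-\beta^*\|$ gives
\[
  e_k\le\sum_{j=0}^{k-2}\Big(\tfrac{1}{\sqrt3}\Big)^{j}\Big[B+C\max_{i\in S^*}\mathbb{I}_{\Lambda^{0}}(i)+D\,r_{k-j}\Big]+\Big(\tfrac{1}{\sqrt3}\Big)^{k-1}e_1,
\]
and summing the geometric series via $\sum_{j\ge0}(1/\sqrt3)^{j}=\frac{\sqrt3}{\sqrt3-1}=\frac{3+\sqrt3}{2}$ collapses the $B$- and $\mathbb{I}_{\Lambda^{0}}$-contributions into the first two terms of \eqref{betak}, while the $r_{k-j}$-contributions remain as the stated finite sum. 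The main obstacle is not the bookkeeping but checking that the constants $B,C,D$ coming out of Lemma \ref{betak-lemma1} combine with the prescribed $\rho_3$ to reproduce exactly the factor $1/\sqrt3$ and the advertised coefficients; in particular one must confirm that $C\rho_{k-1}\le1/\sqrt3$ holds uniformly in $k$ (using monotonicity of $\rho_k$, not merely at the stabilized stage $\rho_3$), and that the extra $\sqrt3/8$ in the denominator of the $\rho_3$ bound absorbs the lower-order discrepancy between the contraction requirement and the already-established uniform bound of Theorem \ref{error-bound1}.
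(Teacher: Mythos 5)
Your proposal follows essentially the same route as the paper: the per-iterate bound with the weight factor $\|v^{k-1}_{S^*}\|_\infty$, Lemma~\ref{FLambdk} to split it into the $\Lambda^{k-1}$- and $F^{k-1}$-indicators, the bounds $\Lambda^{k-1}\subseteq\Lambda^{0}$ and $\max_{i\in S^*}\mathbb{I}_{F^{k-1}}(i)\le\rho_{k-1}\|\beta^{k-1}-\beta^*\|$, the $\rho_3$ condition calibrated so the contraction factor is $1/\sqrt{3}$ (indeed $\sqrt{4.5s^*}+\sqrt{3}/8=\sqrt{3}(\sqrt{1.5s^*}+1/8)$), and unrolling the recursion with $\sum_j(1/\sqrt3)^j=(3+\sqrt3)/2$. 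The one bookkeeping step you leave implicit is that the paper writes $\sqrt{|S^{k-1}|}\le\sqrt{s^*}+\sqrt{|S^{k-1}\setminus S^*|}$ and pushes the second piece (bounded by $\tfrac{\lambda\rho_{k-1}}{8}\|\beta^{k-1}-\beta^*\|$ times the relevant factor, using $\lambda\ge 16n^{-1}\overline{\tau}\|X\|_1+8r_k$) into the contraction term — this is exactly where the $\sqrt{3}/8$ slack is spent and why the noise and inexactness terms in \eqref{betak} carry $\sqrt{s^*}$ rather than $\sqrt{1.5s^*}$.
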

 \begin{remark}
  {\bf(i)} The error bound in \eqref{betak} consists of
  the statistical error due to the noise,
  the identification error $\max_{i\in S^*}\mathbb{I}_{\Lambda^{0}}(i)$
  related to the choice of $a$ and $\rho_{0}$, and the computation errors
  $\sum_{j=0}^{k-2}r_{k-j}(\frac{1}{\sqrt{3}})^{j}$ and
  $(\frac{1}{\sqrt{3}})^{k-1}\|\beta^{1}\!-\beta^{*}\|$.
  By the definition of $\Lambda^{0}$, when $\rho_{0}$ and $a$ are
  such that $\frac{(a+1)\rho_{0}}{4a}>\frac{1}{\min_{i\in S^*}\!|\beta_i^*|}$,
  the identification error becomes zero. If $\min_{i\in S^*}\!|\beta_i^*|$
  is not too small, it would be easy to choose such $\rho_{0}$.
  Clearly, when $\rho_0$ and $a$ are chosen to be larger, the identification
  error is smaller. However, when $\rho_0$ and $a$ are larger, $\rho_1$ becomes larger
  and each component of $w^1$ is close to $1$ by \eqref{wik}.
  Consequently, it will become very conservative to cut those smaller entries
  of $\beta^2$ when solving the second subproblem. Hence, there is a trade-off
  between the choice of $a$ and $\rho_0$ and
  the computation speed of Algorithm \ref{Alg}.

  \noindent
  {\bf(ii)}
  If the subproblem \eqref{subprob-betak} could be solved exactly,
  the computation error $\sum_{j=0}^{k-2}r_{k-j}(\frac{1}{\sqrt{3}})^{j}$ vanishes.
  If the subproblem \eqref{subprob-betak} is solved with the accuracy $r_k$
  satisfying $r_k\le (\frac{1}{\sqrt{3}})^k\frac{1}{k^{\nu}}$ for
  $\nu>1$, this computation error will tend to $0$ as $k\to+\infty$.
  Since the third term on the right hand side of \eqref{betak} is
  the combination of the noise and $\sum_{j=0}^{k-2}r_{k-j}(\frac{1}{\sqrt{3}})^{j}$,
  it is strongly suggested that the subproblem \eqref{subprob-betak}
  is solved as well as possible.

  For the RSC assumption in Theorem \ref{error-bound1}-\ref{error-bound2}, from \cite{Raskutti10} we know that
  if $X$ is from the $\Sigma_x$-Gaussian ensemble (i.e., $X$ is formed by
  independently sampling each row $x_i^{\mathbb{T}}\sim N(0,\Sigma_x)$, there exists
  a constant $\kappa>0$ (depending on $\Sigma_x$) such that the RSC holds
  on $\mathcal{C}(S^*)$ with probability greater than $1\!-c_1\exp(-c_2n)$
  as long as $n>c_0s^*\log p$, where $c_0,c_1$ and $c_2$ are absolutely positive constants.
  From \cite{Banerjee15}, for some sub-Gaussian $X$,
  the RSC holds on $\mathcal{C}(S^*)$ with a high probability when
  $n$ is over a threshold  depending on the Gaussian
  width of $\mathcal{C}(S^*)$.
 \end{remark}

 %------------------------------------------------------------------------
 \section{Proximal dual semismooth Newton method}\label{sec5}

  By Remark \ref{remark-Alg} (iv), the pivotal part of Algorithm \ref{Alg} is
 the exact solution of
  \begin{equation}\label{weight-l1}
   \min_{\beta\in\mathbb{R}^p}\big\{f_{\tau}(y-\!X\beta)+h_{k-1}(\beta)-\langle\delta^k,\beta-\beta^{k-1}\rangle\big\}
  \end{equation}
  where, for each $k\in\mathbb{N}$, $h_k$ is the function defined in \eqref{hkfun}.
  In this section, we develop a proximal dual semismooth
  Newton method (PDSN) for \eqref{weight-l1}, which is a proximal point algorithm
  (PPA) with the subproblems solved by applying the semismooth Newton method to their dual problems.
%--------------------------------------------------------------------------------------------
 \begin{algorithm}[h]
 \caption{\label{PPA}{\bf\ PPA for solving problem \eqref{weight-l1}}}
 \textbf{Initialization:} Fix $k$. Choose $\gamma_{1,0},\gamma_{2,0},\underline{\gamma}>0,
 \varrho\!\in(0,1)$. Let $\beta^0=\beta^{k-1}$.\\
 \textbf{for} $j=0,1,2,\ldots$.
 \begin{itemize}
   \item[1.] Seek the unique minimizer $\beta^{j+1}$ to the following convex program
                \[
                 \!\min_{\beta\in\mathbb{R}^p}\Big\{f_{\tau}(y-\!X\beta)+h_{k-1}(\beta)
                 -\langle\delta^k,\beta-\!\beta^{k-1}\rangle
                   +\frac{\gamma_{1,j}}{2}\|\beta-\beta^j\|^2+\frac{\gamma_{2,j}}{2}\|X(\beta-\!\beta^j)\|^2\Big\}.
                \]

   \item[2.] If $\beta^{j+1}$ satisfies the stopping rule, then stop. Otherwise,
                update $\gamma_{1,j}$ and $\gamma_{2,j}$ by
                $\gamma_{1,j+1}=\max(\underline{\gamma},\varrho \gamma_{1,j})$ and
                $\gamma_{2,j+1}=\max(\underline{\gamma},\varrho\gamma_{2,j})$.
 \end{itemize}
 \vspace{-0.3cm}
 \textbf{end for}
 \end{algorithm}
 \begin{remark}\label{remark-PPA}
  {\bf(i)} Since $f_{\tau}(y\!-\!X\cdot)$ and $h_{k-1}$ are convex
  but nondifferentiable, we follow the same line as in \cite{Tang19} to
  introduce a key proximal term $\frac{\gamma_{2,j}}{2}\|X\beta-\!X\beta^j\|^2$
  except the common $\frac{\gamma_{1,j}}{2}\|\beta-\beta^j\|^2$. As will
  be shown later, this provides an effective way to handle the nonsmooth $f_{\tau}(y-\!X\cdot)$.

  \noindent
  {\bf(ii)} The first-order optimality conditions for \eqref{weight-l1}
  have the following form
  $
    u\in\partial\!f_{\tau}(z),\,
    X^{\mathbb{T}}u+\delta^k\in\partial h_{k-1}(\beta),\,
    y-\!X\beta-\!z=0,
  $
  where $u\in\mathbb{R}^n$ is the multiplier vector associated to
  $y-X\beta-z=0$. Hence, the KKT residual of problem \eqref{weight-l1}
  at $(\beta^{j},z^{j},u^{j})$ can be measured by
  \[
    {\bf Err}_{\rm PPA}^{j}\!:=\!
    \frac{\sqrt{\|z^{j}\!-\!\mathcal{P}\!f_{\tau}(z^{j}\!+\!u^{j})\|^2
    +\!\|\beta^{j}\!-\!\mathcal{P}h_{k-1}(X^{\mathbb{T}}u^{j}\!+\!\delta^{k})\|^2
    +\!\|y-\!X\beta^{j}\!-\!z^{j}\|^2}}{1+\|y\|}.
  \]
  So, we suggest ${\bf Err}_{\rm PPA}^{j}\!\le\epsilon_{\rm PPA}^{j}$
  as the stopping condition of Algorithm \ref{PPA}.
  \end{remark}

 The efficiency of Algorithm \ref{PPA} depends on the solution of its subproblem,
  which by introducing a variable $z\in\mathbb{R}^n$ is equivalently written as
 \begin{align}\label{Esubprobj}
  &\min_{\beta\in\mathbb{R}^p,z\in\mathbb{R}^n}\Big\{f_{\tau}(z)+h_{k-1}(\beta)
  -\!\langle\delta^k,\beta-\!\beta^{k-1}\rangle               +\frac{\gamma_{1,j}}{2}\|\beta-\beta^j\|^2+\frac{\gamma_{2,j}}{2}\|z-z^j\|^2\Big\}\nonumber\\
  &\quad\ {\rm s.t.}\quad X\beta+z-y=0\ \ {\rm with}\ \ z^j=y-\!X\beta^j.
 \end{align}
 After an elementary calculation, the dual of \eqref{Esubprobj} takes the following form
 \begin{equation*}\label{subdprobj}
  \min_{u\in\mathbb{R}^{n}}\bigg\{\Psi_{k,j}(u)\!:=\frac{\|u\|^2}{2\gamma_{2,j}} -e_{\gamma_{2,j}^{-1}}f_{\tau}\Big(z^j-\frac{u}{\gamma_{2,j}}\Big)
   -e_{\gamma_{1,j}^{-1}}h_{k-1}\Big(\beta^j-\frac{X^{\mathbb{T}}u\!+\!\delta^k}{\gamma_{1,j}}\Big)
   +\frac{\|X^{\mathbb{T}}u\|^2}{2\gamma_{1,j}}\bigg\}.
  \end{equation*}
  Since $\Psi_{k,j}$ is a smooth convex function,
  seeking an optimal solution of the last dual problem
  is equivalent to finding a root to the system
  \begin{equation}\label{semismooth-system}
   \Phi_{k,j}(u):=-\mathcal{P}_{\gamma_{2,j}^{-1}}f_{\tau}\Big(z^j\!-\!\frac{u}{\gamma_{2,j}}\Big)
   -X\mathcal{P}_{\gamma_{1,j}^{-1}}h_{k-1}\Big(\beta^j\!-\!\frac{X^{\mathbb{T}}u\!+\!\delta^k}{\gamma_{1,j}}\Big)+y=0.
  \end{equation}
  Since $\mathcal{P}_{\gamma_{2,j}^{-1}}f_{\tau}$ and $\mathcal{P}_{\gamma_{1,j}^{-1}}h_{k-1}$
  are strongly semismooth by Appendix A and the composition of
  strongly semismooth mappings is strongly semismooth by \cite{Facchinei03},
  the mapping $\Phi_{k,j}$ is strongly semismooth. Inspired by this,
  we use the semismooth Newton method to seek a root to system
  \eqref{semismooth-system}, which by \cite{QiSun93} is expected to have
  a superlinear even quadratic convergence rate.
  By Proposition 2.3.3 and Theorem 2.6.6 of \cite{Clarke83},
  the Clarke Jacobian $\partial_C\Phi_{k,j}(u)$ of $\Phi_{k,j}$ at $u$ is included in
  \begin{align}\label{inclusion}
   &\gamma_{2,j}^{-1}\partial_C\big[\mathcal{P}_{\gamma_{2,j}^{-1}}f_{\tau}\big]\Big(z^j\!-\!\frac{u}{\gamma_{2,j}}\Big)
   +\!\gamma_{1,j}^{-1}X\partial_C\big[\mathcal{P}_{\gamma_{1,j}^{-1}}h_{k-1}\big]
   \Big(\beta^j\!-\!\frac{X^{\mathbb{T}}u\!+\delta^k}{\gamma_{1,j}}\Big)X^{\mathbb{T}}\nonumber\\
   &=\gamma_{2,j}^{-1}\mathcal{U}_j(u)+\gamma_{1,j}^{-1}X\mathcal{V}_j(u)X^{\mathbb{T}}
   \ \forall u\in\mathbb{R}^n
  \end{align}
  where \eqref{inclusion} is due to Lemma \ref{CJacobi-halpha}-\ref{CJacobi-ftau} in Appendix A,
  and $\mathcal{U}_j(u)$ and $\mathcal{V}_j(u)$ are
  \begin{align*}
   \mathcal{U}_j(u):=\Big\{{\rm Diag}(v_1,\ldots,v_n)\ |\ v_i\in\partial_C\big[\mathcal{P}_{\gamma_{2,j}^{-1}}(n^{-1}\theta_{\tau})\big](z_i^j-\gamma_{2,j}^{-1}u_i)\Big\},
   \qquad\\
   \mathcal{V}_j(u)\!:=\!\Big\{{\rm Diag}(v)\,|\,
   v_i=1\ {\rm if}\ |(\gamma_{1,j}\beta^j\!-\!X^{\mathbb{T}}u-\!\delta^k)_i|>\omega_i^k,
   {\rm  otherwise}\ v_i\in[0,1]\Big\}.
  \end{align*}
  For each $U^j\!\in\mathcal{U}_j(u)$ and $V^j\!\in\mathcal{V}_j(u)$,
  the matrix $\gamma_{2,j}^{-1}U^j+\!\gamma_{1,j}^{-1}XV^jX^{\mathbb{T}}$
  is semidefinite, and positive definite when
  $\{i\ |\ \frac{\tau-1}{n\gamma}\!\le z_i^j-\gamma_{2,j}^{-1}u_i\le\!\frac{\tau}{n\gamma}\}=\emptyset$
  or the matrix $X_{J}$ has full row rank with $J=\!\{i\ |\ |(\gamma_{1,j}\beta^j-X^{\mathbb{T}}u-\delta^k)_i|>\omega_i^k\}$.
  To ensure that each iterate of the semismooth Newton method works,
  or each element of Clarke Jacobian $\partial_C\Phi_{k,j}(u)$ is nonsingular,
  we add a small positive definite perturbation $\mu I$ to
  $\gamma_{2,j}^{-1}U^j+\!\gamma_{1,j}^{-1}XV^jX^{\mathbb{T}}$.
  The detailed iterates of the semismooth Newton method is provided in Appendix C.
%--------------------------------------------------------------------------------
 \section{Numerical experiments}\label{sec6}

We shall test the performance of Algorithm \ref{Alg} with the subproblems
  solved by PDSN, SeDuMi and sPADMM, respectively, on synthetic and real data,
  and call the three solvers MSCRA\_PPA, MSCRA\_IPM and MSCRA\_ADMM, respectively.
  Among others, SeDuMi is solving the equivalent LP of \eqref{subprob-betak}:
  \begin{align}\label{equiv-LP}
   &\min_{(\beta^{+},\beta^{-})\in\mathbb{R}_{+}^{2p},(\zeta^{+},\zeta^{-})\in\mathbb{R}_{+}^{2n}}
    \langle\omega^k,\beta^{+}\rangle+\langle\omega^k,\beta^{-}\rangle
    +\frac{\tau}{n}\langle\zeta^{+},e\rangle+\frac{1-\tau}{n}\langle\zeta^{-},e\rangle\nonumber\\
   &\qquad\qquad {\rm s.t.}\ \ X\beta^{+}-X\beta^{-}+\zeta^{+}-\zeta^{-}=y,
  \end{align}
  and the iterates of sPADMM are described in Appendix C. All numerical
  results are computed by a laptop computer running on 64-bit Windows
  System with an Intel(R) Core(TM) i7-8565 CPU 1.8GHz and 8 GB RAM.

  For SeDuMi, we adopt the default setting, and for sPADMM we choose
  the step-size $\varrho=1.618$ and the initial $\sigma=1$, and adopt
  the stopping criterion in Appendix C with $j_{\rm max}=3000$
  and $\epsilon_{\rm ADMM}=10^{-6}$. For PDSN, we choose
  $\underline{\gamma}=10^{-8},\varrho=5/7$ and $\gamma_{1,0}=\gamma_{2,0}=\min(0.1,R_0)$
  where $R_0$ is the relative KKT residual at the initial $(\beta^0,z^0,u^0)$,
  and adopt the stopping criterion in Remark \ref{remark-PPA}(ii)
  with $\epsilon_{\rm PPA}^{j+1}=\max(10^{-8},0.1\epsilon_{\rm PPA}^{j})$
  for $\epsilon_{\rm PPA}^{0}\!=10^{-6}$ and the stopping rule
  $\frac{\|\Phi_{k,j}(u^l)\|}{1+\|y\|}\le0.1\epsilon_{\rm PPA}^{j}$ for Algorithm 1 in Appendix C.

  For MSCRA\_IPM, MSCRA\_ADMM and MSCRA\_PPA, we use $w^0=0$,
  and terminate them at $\beta^k$ when $k>10$, or
  $N_{\rm nz}(\beta^{k})=\cdots=N_{\rm nz}(\beta^{k-3})$ and ${\bf Err}_k\le 10^{-5}$,
  or $N_{\rm nz}(\beta^{k})=\cdots=N_{\rm nz}(\beta^{k-2})$ and
  $|{\bf Err}_k-{\bf Err}_{k-2}|\le10^{-6}$, where $N_{\rm nz}(\beta^k)\!:=\!\sum_{i=1}^p\mathbb{I}
  \big\{|\beta_i^k|>\!10^{-6}\max(1,\|\beta^k\|_\infty)\big\}$ denotes the number
  of nonzero entries of $\beta^k$, and ${\bf Err}_k$ is the KKT residual
  at the $k$th step defined in \eqref{errk}. We update $\rho_k$ by $\rho_1=\max\big(1,\frac{1}{3\|\beta^1\|_{\infty}}\big)$
  and $\rho_k=\min\big(\frac{5}{4}\rho_{k-1},\frac{10^8}{\|\beta^k\|_\infty}\big)$
  for $k=2,3$. In addition, during the implementation of three solvers, we run SeDuMi,
  sPADMM and PSDN to solve the $k$th subproblem with the optimal
  solution of the $(k\!-\!1)$th subproblem yielded by them as the starting point.
  When $k=1$, we choose $\beta^0=0$ to be the starting point of MSCRA\_IPM and MSCRA\_ADMM,
  and use $\beta^0=0$ to run Algorithm \ref{PPA}.

%------------------------------------------------------------------------------------
 \noindent{\bf 6.1. Comparisons of three solvers for the subproblem}\label{sec6.1}

 We make numerical comparisons among SeDuMi, sPADMM and PDSN by
 applying them to the problem \eqref{subprob-betak} for $k=1$, i.e., the $\ell_1$-regularized
 check loss minimization problem. Inspired by the work owing to \cite{Gu18},
 we consider the simulation model $y_i =x_i^{\mathbb{T}}\beta^* +\kappa\varepsilon_i$
 for $i=1,\ldots,n$ in \cite{Friedman10} to generate data,
 where $x_i^{\mathbb{T}}\sim N(0,\Sigma)$ for $i=1,\ldots,n$ with
 $\Sigma=(\alpha+(1-\!\alpha)\mathbb{I}_{\{i=j\}})_{p\times p},
 \beta_j^*=\!(-1)^j\exp(-\frac{2j-1}{20})$, $\varepsilon\sim N(0,\Sigma)$,
 and $\kappa$ is chosen such that the signal-noise ratio of
 the data is $3.0$. We focus on the high-dimensional situation with
 $(p,n)=(5000,500)$ and $\alpha=0$ and $0.95$. Figure \ref{fig1}-\ref{fig2}
 show the optimal values yielded by three solvers and their CPU time
 (in seconds) on solving \eqref{subprob-betak} with $k=1$ and the same sequence
 of $50$ values of $\lambda$. By the results in Section \ref{sec4},
 we select the $50$ values of $\lambda$ by
 \begin{equation}\label{lambda}
  \lambda_i=\max\big(0.01, \gamma_i\|X\|_1/n \big)
  \ \ {\rm with}\ \ \gamma_i=\gamma_{\rm min}+((i-1)/49)(\gamma_{\rm max}-\gamma_{\rm min})
 \end{equation}
 for $i=1,2,\ldots,50$, where $\gamma_{\rm min}=0.02$, and $\gamma_{\rm max}=0.25$
 and $0.38$ respectively for $\alpha=0$ and $0.95$. Such $\gamma_{\rm max}$
 is such that $N_{\rm nz}(\beta^{f})$ attains the value $0$,
 where $\beta^f$ represents the final output of a solver.
%-----------------------------------------------------------------------------figure
\begin{figure}[h]
  \centerline{\epsfig{file=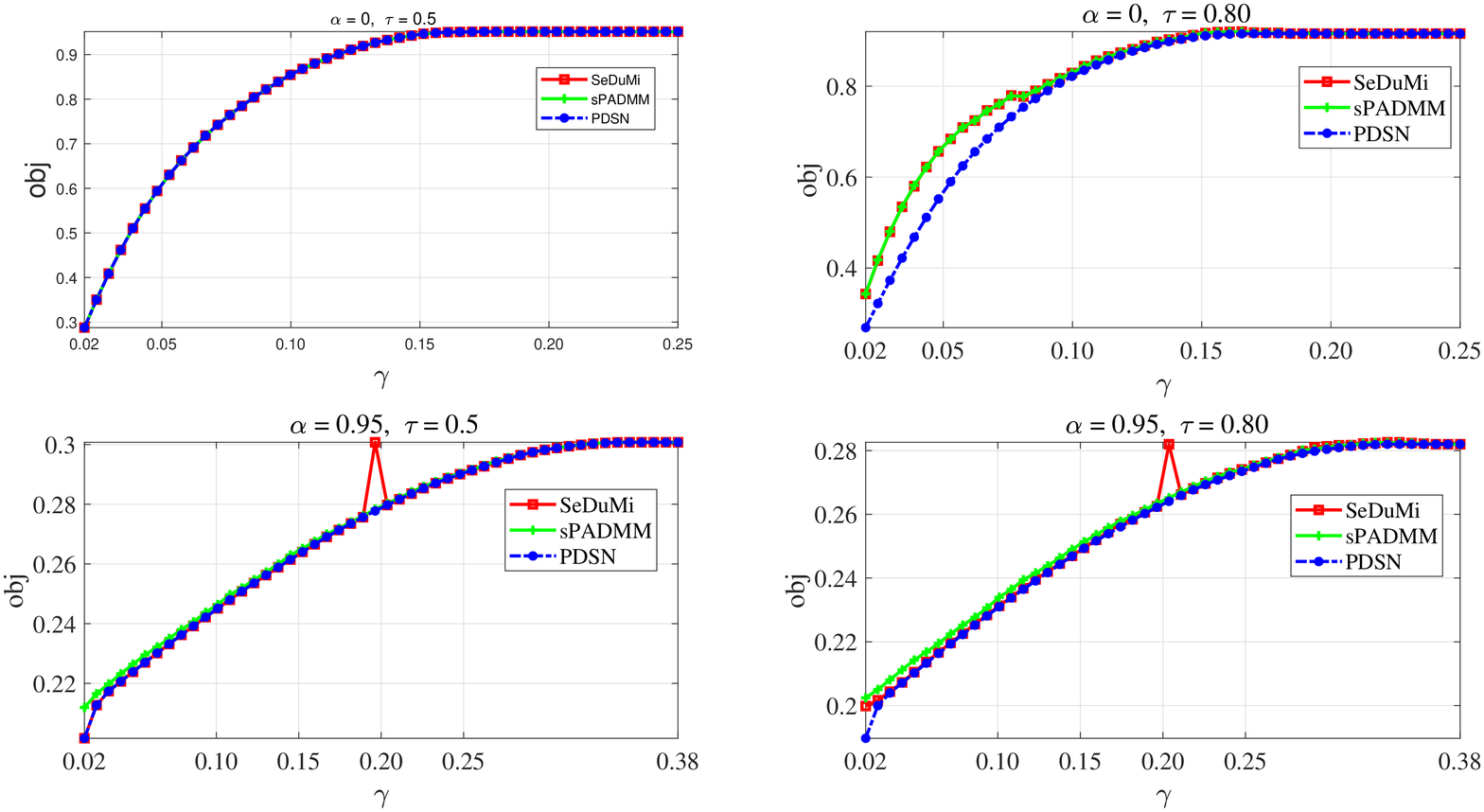,width=6in}}\par
   \vspace{-0.8cm}
  \caption{\small Optimal values of three solvers for the sample size $n=500$}
  \label{fig1}
 \end{figure}
 \begin{figure}[h]
  \centerline{\epsfig{file=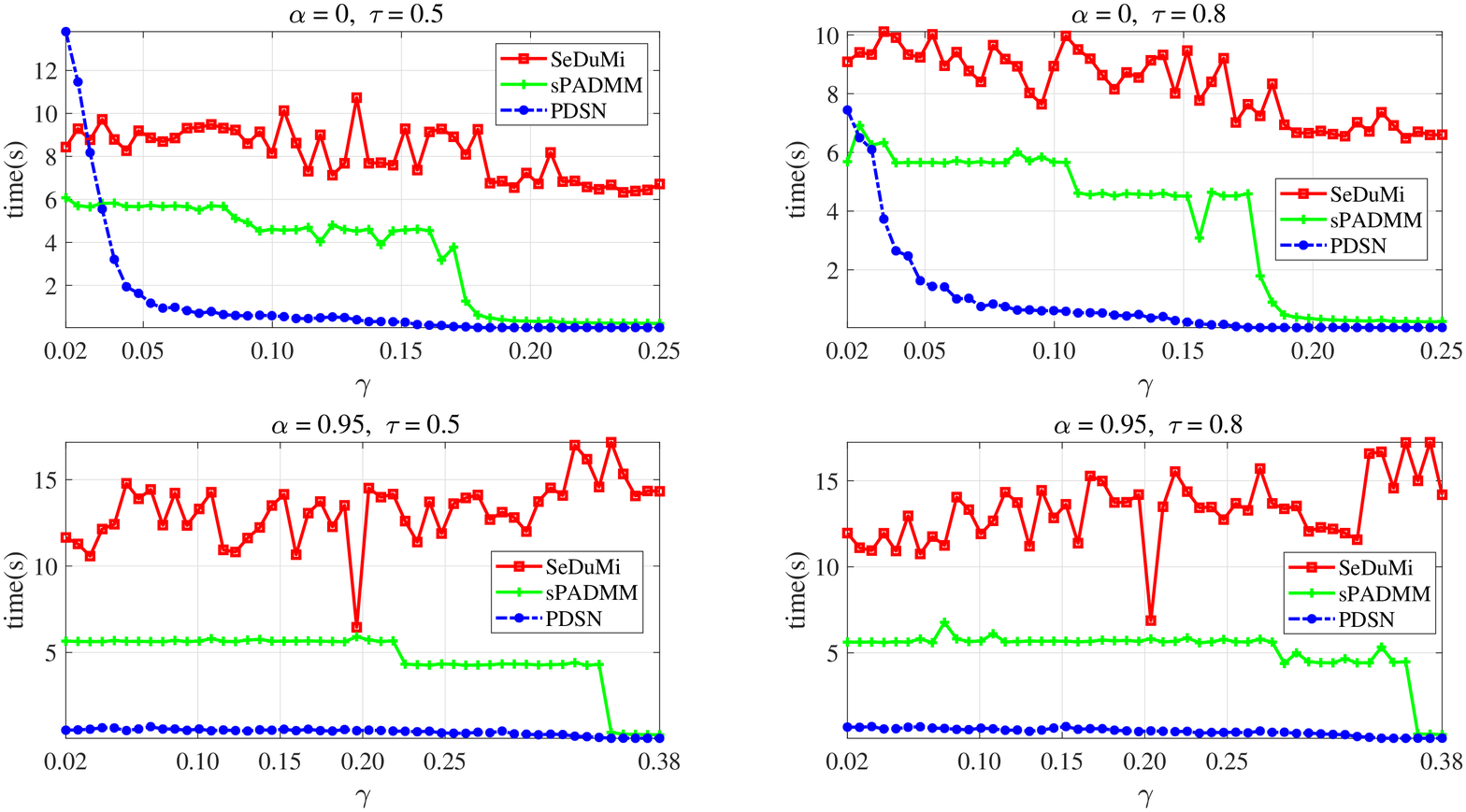,width=6in}}\par
   \vspace{-0.8cm}
  \caption{\small CPU times of three solvers for the sample size $n=500$}
  \label{fig2}
 \end{figure}

 Figure \ref{fig1} shows that the three solvers yield comparable optimal
 values, and the optimal values given by PDSN are a little better than
 those given by SeDuMi and sPADMM. Figure \ref{fig2} shows that PDSN requires
 much less CPU time than SeDuMi and sPADMM do, and for $\alpha=0.95$ the CPU time
 of the former is on average about $0.03$ and $0.09$
 times that of SeDuMi and sPADMM, respectively, but for $\alpha=0,\tau=0.5$,
 when $\lambda<\lambda_3$, PDSN requires more CPU time since the Clarke Jacobians
 are close to singularity. This shows that if the parameter $\lambda$ in the model
 is not too small (a common setting for sparsity), PDSN is superior to SeDuMi
 and sPADMM in terms of the optimal value and CPU time. We find that
 sPADMM always attains the maximum number of iterations $3000$ for all test problems
 (it even attains the maximum number of iterations if $j_{\rm max}=10000$).
 Since $j_{\rm max}=3000$ is used here, its CPU time is less than that of SeDuMi.
%%----------------------------------------------------------------------

 \noindent{\bf 6.2. Numerical performance of Algorithm \ref{Alg}}\label{sec6.2}

 We first apply MSCRA\_PPA to the example in Section 3.1 of \cite{Wang12},
 i.e., solve \eqref{Eprob-penalty} with $\nu=\lambda^{-1}$ for
 $\lambda=\max(0.01,0.1\|X\|_1/n)$, for which the scalar response is
 generated according to the heteroscedastic location-scale model
 $Y=X_6+X_{12}+X_{15}+X_{20}+0.7X_1\varepsilon$, where $\varepsilon\sim N(0,1)$
 is independent of the covariates. Table \ref{identification} reports
 its identification performance for $\tau=0.3,0.5$ and $0.7$ under
 different sample size, where \textbf{Size}, \textbf{AE},
 $P_1$ and $P_2$ have the same meaning as in \cite{Wang12}.
 We see that, for $\tau=0.5$, $P_2$ always equals $0$. So, the check loss with $\tau=0.5$
 can not identify $X_1$, but the check loss with $\tau=0.3$ and $0.7 $
 can identify $X_1$ and the proportion of identifying $X_1$ increases
 as $n$ becomes large.
%----------------------------------------------------------------------------
% \begin{table}[\!h]
% \caption{\small Identification performance of MSCRA\_PPA}
% \label{identification}
% \centering
% \scalebox{0.55}{
% \begin{tabular}{c|cccc|cccc|cccc }
%  \hline
%  \hline
%$n$&$\rm{Size}$&$P_1$& $P_2$ &$\rm{AE}$&$\rm{Size}$&$P_1$& $P_2$ &$\rm{AE}$&$\rm{Size}$&$P_1$& $P_2$ &$\rm{AE}$ \\
%      &\multicolumn{3}{c}{$\tau=0.3$}& &\multicolumn{3}{c}{$\tau=0.5$} & & \multicolumn{3}{c}{$\tau=0.7$} &  \\
%\hline
%$n=250$ & 11.800(4.369)&0.81&0.81&0.197(0.174)      &10.960(3.075)&1.00&0.00&0.034(0.014)     &12.590(4.356)&0.79&0.79&0.183(0.175)  \\
%$n=300$ & 9.320(3.146)&0.83&0.83&0.170(0.165)      & 7.910(2.060)&1.00&0.00&0.027(0.011)     &8.320(2.169)&0.88&0.88&0.220(0.180)  \\
%$n=400$ & 6.290(1.472)&0.93&0.93&0.176(0.155)      & 5.270(1.171)&1.00&0.00&0.021(0.010)     &6.310(1.308)&0.91&0.91&0.151(0.146)  \\
%$n=500$ & 5.330(0.697)&0.91&0.91&0.145(0.127)      & 4.370(0.597)&1.00&0.00&0.018(0.008)     &5.380(0.693)&0.93&0.93&0.162(0.142)  \\
%\hline
%\hline
%\end{tabular}}
%\end{table}
%\vspace{-0.3cm}

 \begin{table}[h]
 \caption{  Identification performance of MSCRA\_PPA}
 \label{identification}
 \centering
 \scalebox{0.95}{
 \begin{tabular}{c|ccccc }
  \hline
  \hline
   & &$n=250$&$n=300$&$n=400$ &$n=500$  \\
   \hline
  \multirow{4}{*}{$\tau=0.3$}
  & \rm{Size} & 11.800(4.369) & 9.320(3.146)  &6.290(1.472)  &5.330(0.697) \\
  & $P_1$     & 0.81 & 0.83  & 0.93 & 0.91\\
  & $P_2$     & 0.81 &0.83   &0.93  &0.91 \\
  & \rm{AE}   & 0.197(0.174) &0.170(0.165)    & 0.176(0.155)  &0.145(0.127) \\
\hline
  \multirow{4}{*}{$\tau=0.5$}
  & \rm{Size} & 10.960(3.075) & 7.910(2.060)  &5.270(1.171)  & 4.370(0.597)\\
  & $P_1$     &1.00 & 1.00  & 1.00 &1.00 \\
  & $P_2$     &0.00 & 0.00  & 0.00 & 0.00\\
  & \rm{AE}   &0.034(0.014) &0.027(0.011)   & 0.021(0.010) &0.018(0.008) \\
  \hline
  \multirow{4}{*}{$\tau=0.7$}
  & \rm{Size} &12.590(4.356)  &8.320(2.169)   & 6.310(1.308) &5.380(0.693) \\
  & $P_1$     & 0.79 & 0.88  & 0.91 &0.93 \\
  & $P_2$     & 0.79 & 0.88  & 0.91 &0.93  \\
  & \rm{AE}   & 0.183(0.175)  & 0.220(0.180)   &0.151(0.146)  &0.162(0.142) \\
\hline
\hline
\end{tabular}}
\end{table}

 Next we use a synthetic example to show that MSCRA\_PPA can solve efficiently
 a series of zero-norm regularized problems \eqref{prob} with different $\tau$
 but a fixed $\lambda$. We generate an i.i.d. standard normal random vector
 $\beta_{S^*}^*$ with $s^*=\lfloor 0.5\sqrt{p}\rfloor$ entries of $S^*$ chosen
 randomly from $\{1,\ldots,p\}$ for $p=15000$, and then obtain the response vector
 $y$ from model \eqref{QR-model}, where $x_i^{\mathbb{T}}\sim N(0,\Sigma)$ for
 $i=1,\ldots,n$ with $\Sigma=0.6E+0.4I$ and $n=\lfloor 2s^*\log p\rfloor$,
 and the noise $\varepsilon_i$ is from the Laplace distribution with density
 $d(u)=0.5\exp(-|u|)$. Here, $E$ is a $p\times p$ matrix of all ones.
 Figure \ref{fig3} describes the average absolute $\ell_2$-error $\|\widehat{\beta}^f\!-\!\beta^*\|$
 and time when applying MSCRA\_PPA to $10$ test problems
 for $\tau\in\{0.05,0.1,0.15,\ldots,0.95\}$ with $\nu=\lambda^{-1}$ and $\lambda=37.5/n$.
 We see that MSCRA\_PPA yields better $\ell_2$-errors for $\tau$ close to $0.5$,
 and worse $\ell_2$-errors for $\tau$ close to $0$ or $1$. So,
 for this class of noises, the check loss with $\tau$ close to $0.5$ is suitable.
 The MSCRA\_PPA yields a desired solution for all test problems in $40$ seconds,
 and the CPU time for $\tau$ close to $0$ or $1$ is about $1.5$ times that of
 $\tau$ close to $0.5$. This means that it is an efficient solver for a series of
 zero-norm regularized problems in \eqref{prob}.
%-------------------------------------------------------------------------------------
% \begin{figure}[H]
%  \centering
%  \includegraphics[height=8cm,width=15cm]{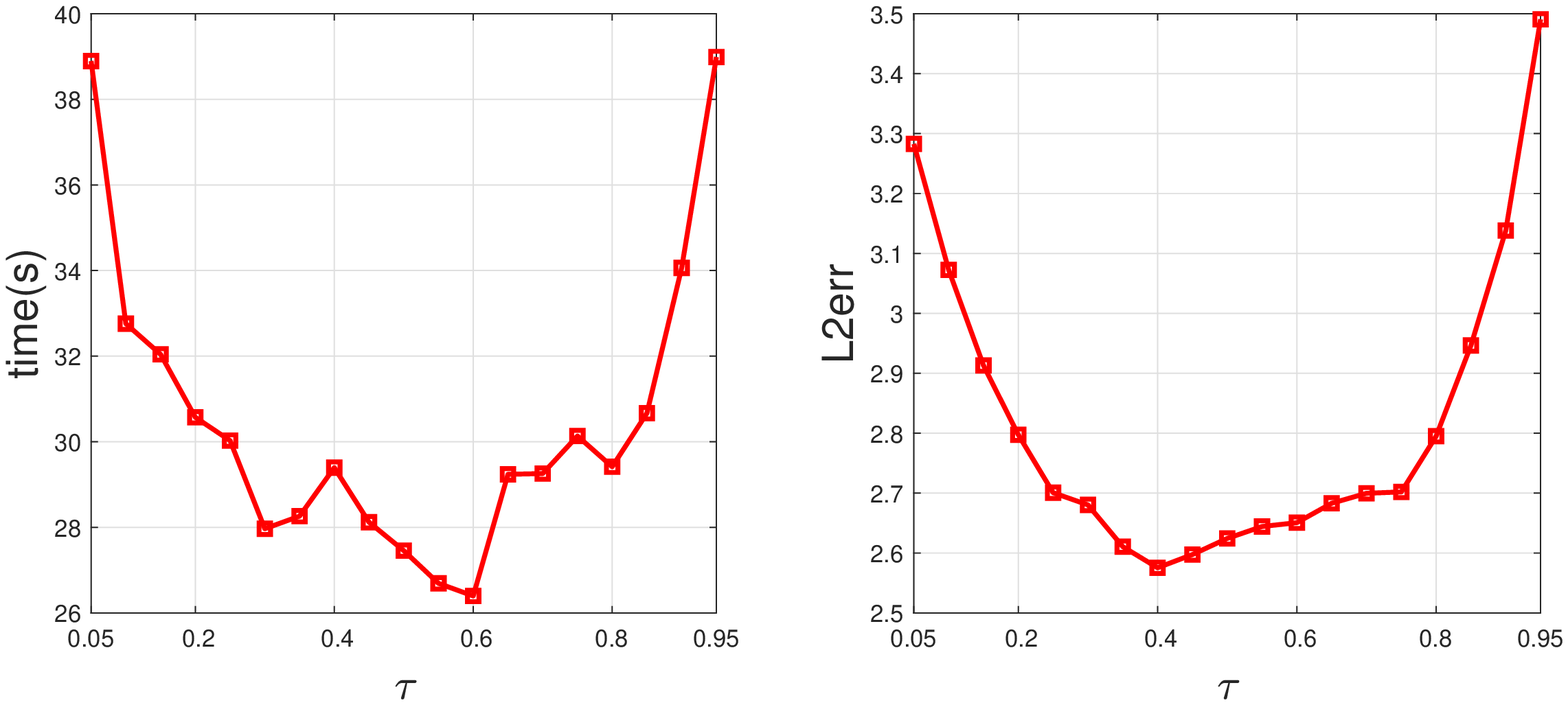}
%  \caption{\small Performance of MSCRA\_PPA under different quantile level $\tau$}
%  \label{fig3}
% \end{figure}

 \begin{figure}[H]
  \centerline{\epsfig{file=runtau.eps,width=6in}}\par
   \vspace{-0.8cm}
  \caption{\small Performance of MSCRA\_PPA under different quantile level $\tau$}
  \label{fig3}
 \end{figure}
%----------------------------------------------------------------------------------
 \section{Conclusions}\label{sec7}

  \vspace{-0.3cm}
  We have proposed a multi-stage convex relaxation approach, MSCRA\_PPA,
  for  computing a desirable approximation to the zero-norm penalized QR,
  which is defined as a global minimizer of an NP-hard problem.
  Under the common RSC condition and a mild restriction on the noises,
  we established the error bound of every iterate to the true estimator
  and the linear rate of convergence of the iterate sequence in a statistical sense.
  Numerical comparisons with MSCRA\_IPM and MSCRA\_ADMM show that MSCRA\_PPA
  yields a comparable estimation performance within much less time.
%
%
%%%%%%%%%%%%%%%%%%%%%%%%%%%%%%%%%%%%%%%%%%%%%%%%%%%%%%%%%%%%%%%%%%%%%%%%%%%%%%%%%%%%%%%%%%%%%%%%%%%%%%%%%%%%%%%%%%%%%%%%%%%%
\vskip 14pt
\noindent {\large\bf Supplementary Materials}

 The online supplementary material consists of five parts. Appendix A includes
 some preliminary knowledge on generalized subdifferentials and Clarke
 Jacobian, and some lemmas used in Section \ref{sec2}-\ref{sec5}; Appendix B includes
 the proof of Theorem \ref{error-bound1} and Theorem \ref{error-bound2};
 Appendix C introduces the semismooth Newton method and the semi-proximal
 ADMM in \cite{Gu16}; Appendix D includes performance comparisons of
 MSCRA\_IPM, MSCRA\_ADMM and MSCRA\_PPA on some synthetic data and real data.

%%%%%%%%%%%%%%%%%%%%%%%%%%%%%%%%%%%%%%%%%%%%%%%%%%%%%%%%%%%%%%%%%%%%%%%%%%%%%%%%%%%%%%%%%%%%%%%%%%%%%%%%%%%%%%%%%%%%%%%%%%%%
\vskip 14pt
\noindent {\large\bf Acknowledgements}

 The authors would like to give their sincere thanks to two anonymous reviewers
 for their helpful comments. The authors would like to express their sincere thanks
 to Professor Kim-Chuan Toh from National University of Singapore for giving them some help
 on the implementation of Algorithm \ref{PPA} when he visited SCUT.
 This work is supported by the National Natural
 Science Foundation of China under project No. 11971177.

  \newpage
  \noindent\centerline{ {\large\bf Supplementary Materials}}

  \noindent
 {\bf\large Appendix A}

 This part includes some preliminary knowledge on generalized subdifferentials
 and Clarke Jacobian, and some lemmas used in Section 2-5.
 First, we recall from \cite[Definition 8.3]{RW98} the notion of
 the subdifferential of an extended real-valued function.
%-----------------------------------------------------------------------------------
  \begin{definition}\label{Gsubdiff-def}
   Consider a function $f\!:\mathbb{R}^p\to(-\infty,+\infty]$ and
  $x\in{\rm dom}f$. The regular subdifferential of $f$ at $x$,
  denoted by $\widehat{\partial}\!f(x)$, is defined as
  \[
    \widehat{\partial}\!f(x):=\bigg\{v\in\mathbb{R}^p\ \big|\
    \liminf_{x'\to x\atop x'\ne x}
    \frac{f(x')-f(x)-\langle v,x'-x\rangle}{\|x'-x\|}\ge 0\bigg\};
  \]
  and the (limiting) subdifferential of $f$ at $x$, denoted by $\partial\!f(x)$, is defined as
  \[
    \partial\!f(x)\!:=\!\Big\{v\in\mathbb{R}^p\ |\  \exists\,x^k\!\to x\ {\rm with}\
    f(x^k)\!\to f(x)\ {\rm and}\  v^k\in\widehat{\partial}\!f(x^k)\ {\rm with}\ v^k\!\to v\Big\}.
  \]
 \end{definition}
%-------------------------------------------------------------------------------
 \begin{remark}\label{remark-Gsubdiff}
  At each $x\in{\rm dom}f$, $\widehat{\partial}\!f(x)$ and $\partial\!f(x)$
  are closed and satisfy $\widehat{\partial}\!f(x)\subseteq\partial\!f(x)$,
  and the set $\widehat{\partial}\!f(x)$ is convex but $\partial\!f(x)$ is
  generally nonconvex. When $f$ is convex, $\widehat{\partial}\!f(x)=\partial\!f(x)$
  and is precisely the subdifferential of $f$ at $x$ in the sense of
  convex analysis \cite{Roc70}.
 \end{remark}
%--------------------------------------------------------------------------
 \begin{definition}(see \cite{Clarke83})
  Let $H\!:\Omega\to\mathbb{R}^n$ be a locally Lipschitz continuous mapping
  defined on an open set $\Omega\subseteq\mathbb{R}^p$. Denote by
  $D_H\subseteq\Omega$ the set of points where $H$ is differentiable
  and by $H'(z)\in\mathbb{R}^{n\times p}$ the Jacobian of $H$ at $z\in D_H$.
  The Clarke Jacobian of $H$ at $\overline{z}\in\Omega$ is
  \[
    \partial_CH(\overline{z}):={\rm conv}\Big\{\lim_{k\to\infty}H'(z^k)\ |\
    \{z^k\}\subseteq D_H\ {\rm with}\ \lim_{k\to\infty}z^k=\overline{z}\Big\}.
  \]
 \end{definition}

  Generally, it is not easy to characterize the Clarke Jacobian of
  a locally Lipschitz mapping. The following lemmas provide such a
  characterization for the proximal mappings of the weighted $\ell_1$-norm
  and the check loss function.
 %----------------------------------------------------------------------------------
  \begin{lemma}\label{CJacobi-halpha}
   For a given $\omega\in\mathbb{R}_{+}^p$, let $h(x):=\|\omega\circ x\|_1$ for $x\in\mathbb{R}^p$.
   Then,
   \begin{align*}
     \mathcal{P}_{\gamma^{-1}}h(z)
     ={\rm sign}(z)\max\big(|z|-\gamma^{-1}w,0\big)\quad\ \forall z\in\mathbb{R}^p,\qquad\qquad\\
     \partial_C(\mathcal{P}_{\gamma^{-1}}h)(z)
      =\big\{{\rm Diag}(v_1,\ldots,v_n)\,|\, v_i=1\ {\rm if}\ |\gamma z_i|>\omega_i,\,
      {\rm otherwise}\,v_i\in[0,1]\big\}.
  \end{align*}
 \end{lemma}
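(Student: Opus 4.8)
The plan is to exploit the \emph{separability} of $h$. Since $h(x)=\sum_{i=1}^p\omega_i|x_i|$ is a sum of one-variable functions, the proximal subproblem decouples coordinatewise: for each $i$,
\[
 [\mathcal{P}_{\gamma^{-1}}h(z)]_i=\mathop{\arg\min}_{t\in\mathbb{R}}\Big\{\omega_i|t|+\tfrac{\gamma}{2}(t-z_i)^2\Big\},
\]
so it suffices to analyze the scalar map $s_c(t):=\mathrm{sign}(t)\max(|t|-c,0)$ with $c:=\omega_i/\gamma\ge 0$ and then reassemble. Writing the optimality condition $0\in\omega_i\,\partial|t|+\gamma(t-z_i)$ and splitting into the cases $t>0$, $t<0$, $t=0$ gives $t=\mathrm{sign}(z_i)\max(|z_i|-\omega_i/\gamma,0)$; collecting the coordinates yields the claimed closed form $\mathcal{P}_{\gamma^{-1}}h(z)=\mathrm{sign}(z)\max(|z|-\gamma^{-1}\omega,0)$. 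Because $h$ is proper, lsc and convex, $\mathcal{P}_{\gamma^{-1}}h$ is globally Lipschitz with modulus $1$ by the remark in the Introduction, so its Clarke Jacobian is well defined.

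For the Clarke Jacobian I would first record the scalar Clarke subdifferential of $s_c$. This function is piecewise affine with slope $1$ on $\{|t|>c\}$ and slope $0$ on $\{|t|<c\}$, and is nondifferentiable only at $t=\pm c$ (when $c>0$); hence
\[
 \partial_C s_c(t)=\{1\}\ \text{if}\ |t|>c,\quad \partial_C s_c(t)=\{0\}\ \text{if}\ |t|<c,\quad \partial_C s_c(\pm c)=[0,1],
\]
the last value being the convex hull of the one-sided slope limits $0$ and $1$. Translating $c=\omega_i/\gamma$ back, the active region $|t|>c$ is exactly $|\gamma z_i|>\omega_i$.

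The main step is to pass from these coordinatewise subdifferentials to the Clarke Jacobian of the full map $H:=\mathcal{P}_{\gamma^{-1}}h$. Being separable, $H(z)=(s_{c_1}(z_1),\dots,s_{c_p}(z_p))$ with $c_i=\omega_i/\gamma$, its differentiability set factorizes as $D_H=\prod_i D_{s_{c_i}}$, and on it $H'(z)=\mathrm{Diag}(s_{c_1}'(z_1),\dots,s_{c_p}'(z_p))$ is diagonal. Approaching $z$ through $D_H$ with the coordinate sequences chosen \emph{independently} shows that the limiting Jacobians are exactly the diagonal matrices whose $i$th entry ranges over the B-subdifferential $\partial_B s_{c_i}(z_i)$; taking convex hulls and using the identity $\mathrm{conv}(A_1\times\cdots\times A_p)=\mathrm{conv}(A_1)\times\cdots\times\mathrm{conv}(A_p)$ gives
\[
 \partial_C H(z)=\big\{\mathrm{Diag}(v_1,\dots,v_p)\ \big|\ v_i\in\partial_C s_{c_i}(z_i)\big\}.
\]
Substituting the three-region scalar formula, so that $v_i=1$ when $|\gamma z_i|>\omega_i$ and the inactive/boundary coordinates are consolidated into $v_i\in[0,1]$, yields the displayed characterization.

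I expect this factorization to be the only genuinely delicate point. In particular I would carry it out directly rather than through the general Clarke product/chain rules of \cite{Clarke83}, since those deliver only an inclusion; the independence of the coordinate limits is precisely what upgrades the inclusion to equality. The soft-thresholding formula and the scalar subdifferential are routine, so I would isolate the separable-map factorization as a short lemma and keep the remaining verifications brief.
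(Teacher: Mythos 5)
Your argument is correct, and in fact the paper states this lemma in Appendix~A without giving any proof at all, so there is no "official" route to compare against; the path you take --- coordinatewise optimality conditions $0\in\omega_i\partial|t|+\gamma(t-z_i)$ to get soft-thresholding, the three-region scalar Clarke subdifferential of $s_c$, and the product/convex-hull factorization $\mathrm{conv}(A_1\times\cdots\times A_p)=\mathrm{conv}(A_1)\times\cdots\times\mathrm{conv}(A_p)$ exploiting that $D_H$ is a product of cofinite sets so the coordinate limits can be taken independently --- is the standard one and is surely what the authors intend. Your decision to prove the factorization directly rather than cite the product rules of Clarke's calculus is well taken, since those rules only give inclusions and equality is exactly what the independence of the coordinates buys.

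One point deserves to be made explicit. Your scalar computation correctly gives $\partial_C s_{c_i}(z_i)=\{0\}$ when $|\gamma z_i|<\omega_i$, so the \emph{sharp} conclusion of your argument is the three-region description with $v_i=0$ on the strictly inactive coordinates and $v_i\in[0,1]$ only on the boundary $|\gamma z_i|=\omega_i$. The set displayed in the lemma, which lumps the inactive and boundary cases into "otherwise $v_i\in[0,1]$", is therefore only an \emph{outer estimate} of $\partial_C(\mathcal{P}_{\gamma^{-1}}h)(z)$, not an equality (a superset also arises in the corner case $\omega_i=0$, $z_i=0$, where $s_0$ is the identity and the true entry is $\{1\}$). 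When you write that the inactive and boundary coordinates are "consolidated into $v_i\in[0,1]$" you are silently relaxing your own equality to an inclusion. This looseness is inherited from the statement itself --- and it is harmless for the way the lemma is used in Section~5, where any superset containing the Clarke Jacobian suffices to define the matrices $V^j$ for the semismooth Newton step --- but in a written proof you should either state the sharp three-region equality and then observe that the displayed set contains it, or flag that the displayed relation should be read as "$\subseteq$".
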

 %----------------------------------------------------------------------------------
  \begin{lemma}\label{CJacobi-ftau}
   For any given $\tau\in(0,1)$, let $\theta_{\tau}$ and $f_{\tau}$ be
   the function defined as in (2.2). Then,
   for any given $\gamma>0$ and $z\in\mathbb{R}^p$, it holds that
   \[
     \big[\mathcal{P}_{\gamma^{-1}}f_{\tau}(z)\big]_i
     =\max\Big(\max\Big(z_i-\frac{\tau}{n\gamma},0\Big),\frac{\tau-1}{n\gamma}-z_i\Big)
     \ \ {\rm for}\ i=1,2,\ldots,p
   \]
%   \[
%     \big[\mathcal{P}_{\gamma^{-1}}f_{\tau}(z)\big]_i=\left\{\begin{array}{cl}
%      z_i - \frac{\tau}{n\gamma} &{\rm if}\ z_i\ge \frac{\tau}{n\gamma}\\
%       0 & {\rm if}\ \frac{\tau-1}{n\gamma}\le z_i\le\frac{\tau}{n\gamma}\\
%      z_i-\frac{\tau-1}{n\gamma}&{\rm if}\ z_i\le\frac{\tau-1}{n\gamma}
%      \end{array}\right.\ \ {\rm for}\ \ i=1,\ldots,p;
%   \]
  and $\partial_C(\mathcal{P}_{\gamma^{-1}}f_{\tau})(y)=
  \big\{{\rm Diag}(v_1,\ldots,v_n)\ |\ v_i\in\partial_C\big[\mathcal{P}_{\gamma^{-1}}(n^{-1}\theta_{\tau})\big](z_i)\big\}$ with
  \begin{equation}\label{subdiff-thetatau}
   \partial_C\big[\mathcal{P}_{\gamma^{-1}}(n^{-1}\theta_{\tau})\big](t)
    =\left\{\begin{array}{cl}
      \!\{1\} &{\rm if}\ t>\frac{\tau}{n\gamma}\ {\rm or}\ t<\frac{\tau-1}{n\gamma};\\
      \![0,1] &{\rm if}\ t=\frac{\tau}{n\gamma}\ {\rm or}\ \frac{\tau-1}{n\gamma}; \\
      \!\{0\} &{\rm if}\ \frac{\tau-1}{n\gamma}<t<\frac{\tau}{n\gamma}.
   \end{array}\right.
  \end{equation}
  \end{lemma}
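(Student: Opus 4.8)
The plan is to exploit the separability of $f_\tau$ and reduce everything to a one-dimensional computation. Since $f_\tau(z)=\sum_{i=1}^n n^{-1}\theta_\tau(z_i)$ is separable and the quadratic $\frac{\gamma}{2}\|\cdot-z\|^2$ (recall $\frac{1}{2\gamma^{-1}}=\frac{\gamma}{2}$) is also separable, the minimization defining $\mathcal{P}_{\gamma^{-1}}f_\tau(z)$ decouples across coordinates, so that $[\mathcal{P}_{\gamma^{-1}}f_\tau(z)]_i=\mathcal{P}_{\gamma^{-1}}(n^{-1}\theta_\tau)(z_i)$ for each $i$. Thus it suffices to compute the scalar proximal mapping of $g:=n^{-1}\theta_\tau$, which is a convex piecewise-linear function with derivative $\frac{\tau}{n}$ on $(0,\infty)$, derivative $\frac{\tau-1}{n}$ on $(-\infty,0)$, and $\partial g(0)=[\frac{\tau-1}{n},\frac{\tau}{n}]$.

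First I would characterize $t^*=\mathcal{P}_{\gamma^{-1}}g(z)$ through the optimality condition $\gamma(z-t^*)\in\partial g(t^*)$. Splitting into the three cases $t^*>0$, $t^*=0$ and $t^*<0$ and inserting the corresponding value of $\partial g$ yields $t^*=z-\frac{\tau}{n\gamma}$ when $z>\frac{\tau}{n\gamma}$, $t^*=0$ when $\frac{\tau-1}{n\gamma}\le z\le\frac{\tau}{n\gamma}$, and $t^*=z-\frac{\tau-1}{n\gamma}$ when $z<\frac{\tau-1}{n\gamma}$; each case is consistent with its defining inequality because $\tau\in(0,1)$ forces $\frac{\tau-1}{n\gamma}<0<\frac{\tau}{n\gamma}$. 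Rewriting this asymmetric soft-thresholding rule as a composition of $\max$ operations then gives the stated closed-form expression for $[\mathcal{P}_{\gamma^{-1}}f_\tau(z)]_i$.

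For the Clarke Jacobian I would again work at the scalar level with $p:=\mathcal{P}_{\gamma^{-1}}(n^{-1}\theta_\tau)$. From the explicit piecewise-linear formula, $p$ is differentiable with $p'=1$ on the outer pieces $\{t>\frac{\tau}{n\gamma}\}$ and $\{t<\frac{\tau-1}{n\gamma}\}$ and with $p'=0$ on the middle piece $(\frac{\tau-1}{n\gamma},\frac{\tau}{n\gamma})$. At each of the two breakpoints the one-sided derivatives are $0$ and $1$, so taking limits of $p'$ along differentiability points and forming the convex hull gives $\partial_C p(t)=[0,1]$ there; this produces exactly \eqref{subdiff-thetatau}. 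Lifting back to $\mathbb{R}^n$, the map $y\mapsto\mathcal{P}_{\gamma^{-1}}f_\tau(y)$ is the coordinatewise application of $p$, so its Jacobian at any point of differentiability is diagonal, and passing to limits and convex hull preserves the diagonal structure while decoupling across coordinates, giving $\partial_C(\mathcal{P}_{\gamma^{-1}}f_\tau)(y)=\{{\rm Diag}(v_1,\ldots,v_n):v_i\in\partial_C p(y_i)\}$.

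The case analysis and the one-sided derivative computation are routine; the one step requiring care is the last lifting argument, namely justifying that the Clarke Jacobian of the separable (coordinatewise) mapping equals the full product $\partial_C p(y_1)\times\cdots\times\partial_C p(y_n)$ of diagonal matrices rather than merely being contained in it. I expect this to be the main obstacle: it amounts to showing that the differentiability set of the coordinatewise map near $y$ is the product of the scalar differentiability sets, so that every admissible combination of limiting slopes is attainable, which follows from the structure of separable locally Lipschitz mappings (cf.\ \cite{Clarke83}).
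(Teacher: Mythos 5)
Your proposal is sound, and since the paper states this lemma in Appendix~A without any proof, there is nothing to compare against route-wise; the separable reduction to the scalar prox of $n^{-1}\theta_\tau$, the three-case optimality analysis $\gamma(z-t^*)\in\partial g(t^*)$, and the piecewise computation of the Clarke Jacobian are exactly the standard (and essentially only) way to establish this result. You also correctly identify the one step that is not entirely mechanical, namely that the Clarke Jacobian of a coordinatewise mapping equals the full product $\partial_C p(y_1)\times\cdots\times\partial_C p(y_n)$ rather than merely being contained in it; your justification is right, since for a separable map the differentiability set is the product of the scalar differentiability sets, the B-subdifferential is therefore the product of the scalar B-subdifferentials (coordinates can be perturbed independently), and the convex hull of a Cartesian product is the product of the convex hulls.

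One caution: your case analysis correctly yields $t^*=z-\frac{\tau-1}{n\gamma}$ (a negative number) on the branch $z<\frac{\tau-1}{n\gamma}$, but you then assert without checking that this ``gives the stated closed-form expression.'' It does not, literally: the displayed formula $\max\big(\max(z_i-\frac{\tau}{n\gamma},0),\frac{\tau-1}{n\gamma}-z_i\big)$ is always nonnegative and returns $\frac{\tau-1}{n\gamma}-z_i$, the negative of the correct value, on that branch. This is evidently a typographical slip in the paper (the intended expression is $\max(z_i-\frac{\tau}{n\gamma},0)-\max(\frac{\tau-1}{n\gamma}-z_i,0)$, which is the one consistent with the Clarke Jacobian formula \eqref{subdiff-thetatau}, whose derivative on the outer left branch is $1$ rather than $-1$). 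Your derivation is the correct one; just do not paper over the final identification step, since verifying it is precisely how the discrepancy surfaces.
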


 To close this part, we show that under a mild condition, the zero-norm regularized
 composite problem has a nonempty global optimal solution set.
%------------------------------------------------------------------------------
 \begin{lemma}\label{sol-exist}
  Let $A\in\mathbb{R}^{n\times p}$ and $b\in\mathbb{R}^n$ be given,
  and let $g\!:\mathbb{R}^n\to\mathbb{R}$ be an lsc coercive
  function with $\inf_{z\in\mathbb{R}^n}g(z)>-\infty$.
  Then, for any given $\nu>0$, the zero-norm composite problem
  \begin{equation}\label{prob-appendix}
   \min_{x\in\mathbb{R}^p}\Big\{\nu g(b-\!Ax)+\|x\|_0\Big\}
  \end{equation}
  has a nonempty global optimal solution set.
 \end{lemma}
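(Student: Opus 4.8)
The plan is to exploit the discrete, combinatorial nature of the zero-norm by decomposing $\mathbb{R}^p$ according to the support of $x$, and to reduce each piece to a coercive minimization of $g$ over a closed affine subspace. Write $F(x):=\nu g(b-Ax)+\|x\|_0$. The obstruction to a direct compactness argument is that $F$ need not be coercive in $x$: when $A$ has a nontrivial kernel, $b-Ax$ stays bounded along directions in $\ker A$, so the coercivity of $g$ does not transfer to $F$, and moreover $\|x\|_0$ is integer-valued with noncompact level sets.

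First, I would fix a support set $S\subseteq\{1,\ldots,p\}$ and consider the restricted problem $m_S:=\inf\{g(b-Ax):{\rm supp}(x)\subseteq S\}$. As $x$ ranges over all vectors supported on $S$, the vector $b-Ax$ ranges exactly over the closed affine set $C_S:=b-{\rm range}(A_S)$, where $A_S$ is the submatrix of $A$ with columns indexed by $S$; hence $m_S=\inf_{z\in C_S}g(z)$. Since $g$ is real-valued (so $m_S<+\infty$), lsc, and coercive, a standard minimizing-sequence argument applies: any minimizing sequence in the closed set $C_S$ is bounded by coercivity, admits a convergent subsequence whose limit lies in $C_S$, and lower semicontinuity forces this limit to attain $m_S$. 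Pulling this minimizer back through $A_S$ yields a point $\bar{x}^S$ with ${\rm supp}(\bar{x}^S)\subseteq S$ and $g(b-A\bar{x}^S)=m_S$.

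Next, since there are only finitely many subsets $S$, I would collect the finite family $\{\bar{x}^S\}_{S}$ of candidate points and let $x^*$ be one that minimizes $F$ over this family. To verify that $x^*$ is globally optimal, take an arbitrary $x\in\mathbb{R}^p$ and set $S:={\rm supp}(x)$. Then $x$ is supported on $S$, so $g(b-Ax)\ge m_S=g(b-A\bar{x}^S)$; and $\|x\|_0=|S|\ge\|\bar{x}^S\|_0$ because $\bar{x}^S$ is supported on $S$. Adding $\nu$ times the first inequality to the second gives $F(x)\ge F(\bar{x}^S)\ge F(x^*)$, which shows that $x^*$ attains the global minimum and hence that the optimal solution set is nonempty.

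The only place requiring care is the reduction in the second paragraph: the restricted infimum over the non-coercive feasible set $\{x:{\rm supp}(x)\subseteq S\}$ must be rewritten as an infimum of $g$ over the affine image $C_S$, which is where the coercivity of $g$ can finally be invoked to guarantee attainment. Once this reduction and the attainment on each $C_S$ are in hand, the finiteness of the number of supports together with the monotonicity $\|x\|_0=|{\rm supp}(x)|\ge\|\bar{x}^{{\rm supp}(x)}\|_0$ makes the combination into a global minimizer routine.
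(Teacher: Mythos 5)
Your proof is correct, and it takes a genuinely different route from the paper's. You decompose the problem combinatorially over the finitely many supports $S$, observe that $\{b-Ax:\mathrm{supp}(x)\subseteq S\}$ is the closed affine set $b-\mathrm{range}(A_S)$, and invoke coercivity plus lower semicontinuity of $g$ to attain each restricted infimum $m_S$; the global minimizer is then the best of the finitely many candidates $\bar{x}^S$, using $\|\bar{x}^S\|_0\le|S|=\|x\|_0$. The paper instead runs a single minimizing-sequence argument for the full objective: it splits the coordinates of an unbounded minimizing sequence $\{x^k\}$ into the unbounded block $J$ and the bounded block $\overline{J}$, notes that eventually $\|x_J^k\|_0=|J|$, uses coercivity of $g$ to bound $z^k=b-Ax^k$, and then uses the closedness of $A_J(\mathbb{R}^{|J|})$ to produce a finite preimage $u^*$ with $\|u^*\|_0\le|J|$. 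The two arguments hinge on the same two facts --- coercivity of $g$ controls $b-Ax$ along minimizing sequences, and ranges of submatrices are closed --- but your version packages them more modularly: each subproblem is a clean coercive minimization over a closed set, and the delicate subsequence extraction on the divergent coordinates disappears. The paper's version, on the other hand, extracts a minimizer directly from an arbitrary minimizing sequence of the original objective without enumerating the $2^p$ supports, which keeps the argument self-contained in a single pass. Both are complete; yours is arguably the cleaner exposition for this particular statement.
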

 \begin{proof}
  Notice that the objective function of \eqref{prob-appendix}
  is lower bounded. So, it has an infimum, say $\alpha^*$.
  Then there exists a sequence $\{x^k\}\subset\mathbb{R}^p$ such that
  \begin{equation}\label{gxk-ineq}
    \nu g(b-\!Ax^k)+\|x^k\|_0\le \alpha^*+1/k\ \ {\rm for\ each}\ k.
  \end{equation}
  If $\{x^k\}$ is bounded, then by letting $\overline{x}$ be
  an arbitrary limit point of $\{x^k\}$ and using the lsc of
  $x\mapsto g(b-\!A\,x)$ and $\|\cdot\|_0$, we have
  $\nu g(b-\!A\overline{x})+\|\overline{x}\|_0\le\alpha^*$.
  This shows that $\overline{x}$ is a global optimal solution of
  the problem \eqref{prob-appendix}. Next we consider the case that
  $\{x^k\}$ is unbounded. Define
  \[
    J:=\big\{i\in\{1,\ldots,p\}\ |\ \{x_i^k\}\ {\rm is\ unbounded}\big\}
    \ \ {\rm and}\ \
    \overline{J}:=\{1,\ldots,p\}\backslash J.
  \]
  Along with \eqref{gxk-ineq}, it immediately follows that
  for all sufficiently large $k$,
  \begin{equation}\label{gxk-ineq1}
    \nu g(b-\!Ax^k)+|J|+\|x_{\overline{J}}^k\|_0\le\alpha^*+1/k.
  \end{equation}
  This, by the coerciveness of $g$, means that there is a bounded
  sequence $\{z^k\}\subset\mathbb{R}^n$ such that $z^k=b-\!Ax^k$.
  Clearly, $A_Jx_J^k=b-z^k-A_{\overline{J}}x_{\overline{J}}^k$.
  Notice that $\{z^k\}$ and $\{x_{\overline{J}}^k\}$ are bounded.
  We may assume (taking a subsequence if necessary) that $\{z^k\}$
  and $\{x_{\overline{J}}^k\}$ are convergent, say, $z^k\to z^*$
  and $x_{\overline{J}}^k\to\xi^*$.
  Notice that for each $k$, $x_{\!J}^k$ is a solution of the system
  $A_Jy=b-z^k-A_{\overline{J}}x_{\overline{J}}^k$, that is,
  $\{b-z^k-A_{\overline{J}}x_{\overline{J}}^k\}\subset A_J(\mathbb{R}^{|J|})$.
  Together with the closedness of the set $A_J(\mathbb{R}^{|J|})$,
  it follows that $b-z^*-A_{\overline{J}}\xi^*\in A_J(\mathbb{R}^{|J|})$.
  So, there exists $u^*\in\mathbb{R}^{|J|}$ such that
  $A_Ju^*=b-z^*\!-\!A_{\overline{J}}\xi^*$, i.e., $A_Ju^*+A_{\overline{J}}\xi^*\!-\!z^*=b$.
  Taking the limit to the both sides of \eqref{gxk-ineq1}
  and using $b-\!Ax^k=z^k$ gives
  \[
    \nu g(z^*)+|J|+\|\xi^*\|_0\le\alpha^*.
  \]
  Together with $\nu g(b-A_Ju^*-\!A_{\overline{J}}\xi^*)+\|u^*\|_0+\|\xi^*\|_0
  \le \nu g(z^*)+|J|+\|\xi^*\|_0$, we conclude that $(u^*;\xi^*)$ is a global optimal
  solution of the zero-norm composite problem \eqref{prob-appendix}.
 \end{proof}

 \noindent
 {\bf\large Appendix B}

  In this part, for each $k\in\mathbb{N}$ we write $v^k:=e-w^k$
  and $z^{k}\!:=y-\!X\beta^k$. To present the proof of Theorem 2,
  we need the following technical lemma.
%-----------------------------------------------------------------------------------
 \begin{lemma}\label{betak-lemma1}
  Suppose that Assumption 1 holds and
  for some $k\!\ge 1$ there exists $S^{k-1}\!\supseteq S^*$ with
  \(
    \max_{i\in(S^{k-1})^c}w_i^{k-1}\le\frac{1}{2}.
  \)
  Then, when $\lambda\ge 16\overline{\tau}n^{-1}\|X\|_1+8r_k$,
  \[
    \big\|\Delta\beta^{k}_{(S^{k-1})^c}\|_1\le 3\|\Delta\beta^{k}_{S^{k-1}}\|_1.
  \]
 \end{lemma}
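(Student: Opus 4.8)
The plan is to run the classical cone (``basic inequality'') argument for $\ell_1$-type estimators, adapted here to the \emph{inexact} optimality condition and to the nonuniform weights $v^{k-1}:=e-w^{k-1}$. Throughout write $\Delta\beta^k:=\beta^k-\beta^*$ and $S:=S^{k-1}$, and record the two structural facts I will exploit: first, $y-X\beta^*=\varepsilon$, so $z^k=y-X\beta^k=\varepsilon-X\Delta\beta^k$; second, since $S\supseteq S^*$, every $i\in S^c$ satisfies $\beta_i^*=0$, whence $|\beta_i^k|-|\beta_i^*|=|\Delta\beta_i^k|$ on $S^c$. Let $G(\beta):=f_\tau(y-X\beta)+\lambda\sum_{i=1}^p v_i^{k-1}|\beta_i|$ denote the convex objective of \eqref{subprob-betak}, so that the hypothesis $\delta^k\in\partial[\,\cdot\,]_{\beta=\beta^k}$ reads exactly $\delta^k\in\partial G(\beta^k)$ with $\|\delta^k\|\le r_k$.

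Next I would derive the basic inequality. By convexity and $\delta^k\in\partial G(\beta^k)$, $G(\beta^k)-G(\beta^*)\le\langle\delta^k,\Delta\beta^k\rangle\le\|\delta^k\|_\infty\|\Delta\beta^k\|_1\le r_k\|\Delta\beta^k\|_1$, where I used $\|\delta^k\|_\infty\le\|\delta^k\|\le r_k$. To split the left-hand side I lower-bound the check-loss increment by a subgradient $\xi\in\partial f_\tau(\varepsilon)$: from the piecewise-linear form of $\theta_\tau$ one reads off $\partial\theta_\tau\subseteq[\tau-1,\tau]$, hence $\|\xi\|_\infty\le\overline{\tau}/n$ and therefore $\|X^{\mathbb{T}}\xi\|_\infty\le n^{-1}\overline{\tau}\|X\|_1$ by the definition of the maximum-column-sum norm; combined with H\"older this gives $f_\tau(z^k)-f_\tau(\varepsilon)\ge-\langle X^{\mathbb{T}}\xi,\Delta\beta^k\rangle\ge-n^{-1}\overline{\tau}\|X\|_1\|\Delta\beta^k\|_1$. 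Since $\lambda\sum_i v_i^{k-1}(|\beta_i^k|-|\beta_i^*|)=\big[G(\beta^k)-G(\beta^*)\big]-\big[f_\tau(z^k)-f_\tau(\varepsilon)\big]$, the two bounds yield
\[
  \lambda\sum_{i=1}^{p} v_i^{k-1}\big(|\beta_i^k|-|\beta_i^*|\big)
  \le\big(r_k+n^{-1}\overline{\tau}\|X\|_1\big)\,\|\Delta\beta^k\|_1 .
\]

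Then I split the sum over $S$ and $S^c$. On $S^c$ the term equals $v_i^{k-1}|\Delta\beta_i^k|\ge\tfrac12|\Delta\beta_i^k|$, using the hypothesis $w_i^{k-1}\le\tfrac12$ (so $v_i^{k-1}\ge\tfrac12$); on $S$ the reverse triangle inequality and $0\le v_i^{k-1}\le1$ give $v_i^{k-1}(|\beta_i^k|-|\beta_i^*|)\ge-|\Delta\beta_i^k|$. Hence the left-hand side dominates $\lambda\big(\tfrac12\|\Delta\beta^k_{S^c}\|_1-\|\Delta\beta^k_{S}\|_1\big)$, and writing $a:=\|\Delta\beta^k_S\|_1$, $b:=\|\Delta\beta^k_{S^c}\|_1$, $c_0:=r_k+n^{-1}\overline{\tau}\|X\|_1$ turns the estimate into $\lambda(\tfrac12 b-a)\le c_0(a+b)$, i.e. $(\tfrac{\lambda}{2}-c_0)b\le(\lambda+c_0)a$. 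A short rearrangement shows $b\le3a$ as soon as $\lambda\ge8c_0$; the stated threshold $\lambda\ge16\overline{\tau}n^{-1}\|X\|_1+8r_k$ is comfortably sufficient for this (indeed it gives $\tfrac{\lambda}{2}-c_0>0$ and more than the required slack), which is the desired $\|\Delta\beta^k_{(S^{k-1})^c}\|_1\le3\|\Delta\beta^k_{S^{k-1}}\|_1$.

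I do not anticipate a genuine obstacle: the two H\"older bounds and the final scalar rearrangement are routine. The only points needing care are bookkeeping — keeping the chain rule $\partial_\beta f_\tau(y-X\cdot)=-X^{\mathbb{T}}\partial f_\tau$ consistent so that the subgradient bound $\|X^{\mathbb{T}}\xi\|_\infty\le n^{-1}\overline{\tau}\|X\|_1$ is applied at $\varepsilon$, and using $v_i^{k-1}\ge\tfrac12$ \emph{only} on $S^c$ (where the hypothesis supplies it) while using $v_i^{k-1}\le1$ everywhere. These are exactly the places where the assumption $S^{k-1}\supseteq S^*$ and the weight bound $\max_{i\in(S^{k-1})^c}w_i^{k-1}\le\tfrac12$ enter, and they drive the whole conclusion.
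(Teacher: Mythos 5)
Your proof is correct, and it takes a genuinely different (and for this particular lemma, more economical) route than the paper's. The paper does not simply lower-bound the loss increment by a subgradient at $\varepsilon$; instead it exploits the identity $\theta_\tau(u)=\theta_\tau^2(u)/\theta_\tau(u)$ together with the strong convexity of $\theta_\tau^2$ (modulus $2\underline{\tau}^2$), splitting the indices into $\mathcal{I}=\{i:\varepsilon_i\ne 0\}$ and $\mathcal{J}_k=\{i\notin\mathcal{I}: z_i^k\ne 0\}$, to obtain
\[
 f_{\tau}(y-\!X\beta^{k})-f_{\tau}(y-\!X\beta^*)\ \ge\
 \frac{\underline{\tau}^2\|X\Delta\beta^k\|^2}{n\overline{\tau}(\|z^{k}\|_{\infty}+\|\varepsilon\|_\infty)}
 -\frac{2\overline{\tau}}{n}\|X\Delta\beta^k\|_1 .
\]
For the cone inequality the quadratic term is then discarded (only its nonnegativity is used), and the linear term is bounded by $\frac{2\overline{\tau}}{n}\|X\|_1\|\Delta\beta^k\|_1$; the rest of the argument (splitting over $S^{k-1}$ and $(S^{k-1})^c$, using $v_i^{k-1}\in[\frac12,1]$ on the complement, and the scalar rearrangement under $2n^{-1}\overline{\tau}\|X\|_1+r_k\le\lambda/8$) is identical in spirit to yours. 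Your plain-convexity bound $f_\tau(z^k)-f_\tau(\varepsilon)\ge -n^{-1}\overline{\tau}\|X\|_1\|\Delta\beta^k\|_1$ is all that this lemma needs, and it even yields a slightly smaller constant ($c_0=n^{-1}\overline{\tau}\|X\|_1+r_k$ versus the paper's $2n^{-1}\overline{\tau}\|X\|_1+r_k$), so the stated threshold on $\lambda$ is met with room to spare. What the paper's heavier machinery buys is the retained quadratic term $\underline{\tau}^2\|X\Delta\beta^k\|^2/(n\overline{\tau}(\|z^k\|_\infty+\|\varepsilon\|_\infty))$, which is the engine of the subsequent error bound in Lemma 6 (and hence Theorem 2); by proving both lemmas from the single master inequality the authors avoid redoing the loss-decomposition. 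As a standalone proof of the cone condition, however, your argument is complete and correct.
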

 \begin{proof}
  By the approximate optimality of $\beta^k$ to (3.1)
  and Remark 1(iv),
  \begin{align*}
   f_{\tau}(y-\!X\beta^*)+\lambda\langle v^{k-1},|\beta^*|\rangle
   &\ge f_{\tau}(y-\!X\beta^k)+\lambda\langle v^{k-1},|\beta^k|\rangle+\langle\delta^k,\beta^*-\beta^k\rangle
  \end{align*}
  which, after a suitable rearrangement, takes the following form
  \begin{equation}\label{ineq1-ftau}
   f_{\tau}(y-\!X\beta^{k})-f_{\tau}(y-\!X\beta^*)+\langle\delta^k,\beta^*-\beta^k\rangle
   \le \lambda\langle v^{k-1},|\beta^*|-|\beta^{k}|\rangle.
  \end{equation}
  Recall that $\varepsilon=y-X\beta^*$ and $\|\varepsilon\|_\infty>0$.
  We define the following index sets
  \begin{equation}\label{zk-index}
   \mathcal{I}:=\big\{i\in\{1,\ldots,n\}\!: \varepsilon_i\neq0\big\}\ \ {\rm and}\ \
   \mathcal{J}_k:=\big\{i\notin\mathcal{I}\!: z_i^{k}\neq0\big\}.
  \end{equation}
  By the expression of $f_{\tau}$ and $\theta_{\tau}(0)=0$,
  with the index sets $\mathcal{I}$ and $\mathcal{J}_k$,
  \begin{align}\label{ineq2-ftau}
   &f_{\tau}(y-X\beta^{k})-f_{\tau}(y-X\beta^*)
     =\frac{1}{n}\sum_{i=1}^n[\theta_{\tau}(z^{k}_i)-\theta_{\tau}(\varepsilon_i)]\nonumber\\
   &=\frac{1}{n}\bigg[\sum_{i\in \mathcal{J}_k}\frac{\theta_{\tau}^2(z^{k}_i)-\theta_{\tau}^2(\varepsilon_i)}
    {\theta_{\tau}(z^{k}_i)+\theta_{\tau}(\varepsilon_i)}
   +\sum_{i\in \mathcal{I}}\frac{\theta_{\tau}^2(z^{k}_i)-\theta_{\tau}^2(\varepsilon_i)}
    {\theta_{\tau}(z^{k}_i)+\theta_{\tau}(\varepsilon_i)}\bigg]\nonumber\\
   &\ge\frac{1}{n}\bigg[\sum_{i\in \mathcal{J}_k}\frac{\theta_{\tau}^2(z^{k}_i)-\theta_{\tau}^2(\varepsilon_i)}
    {\overline{\tau}\|z^{k}\|_{\infty}}
   +\sum_{i\in \mathcal{I}}\frac{\theta_{\tau}^2(z^{k}_i)-\theta_{\tau}^2(\varepsilon_i)}
    {\theta_{\tau}(z^{k}_i)+\theta_{\tau}(\varepsilon_i)}\bigg].
  \end{align}
  Notice that $\theta_{\tau}^2$ is smooth and strongly convex of modulus $2\underline{\tau}^2$.
  For each $i$,
  \begin{equation}\label{ineq-theta-tau}
    \theta_{\tau}^2(z^{k}_i)-\theta_{\tau}^2(\varepsilon_i)
    \ge 2(\tau-\mathbb{I}_{\mathbb{R}_{-}}(\varepsilon_i))^2\varepsilon_i(z_i^k-\varepsilon_i)
    +\underline{\tau}^2(z^{k}_i-\varepsilon_i)^2.
  \end{equation}
  This implies that $\theta_{\tau}^2(z^{k}_i)-\theta_{\tau}^2(\varepsilon_i)\ge\underline{\tau}^2(z^{k}_i-\varepsilon_i)^2$
  for each $i\in\mathcal{J}_k$, and then
  \begin{equation}\label{ineq3-ftau}
   \sum_{i\in \mathcal{J}_k}\frac{\theta_{\tau}^2(z^{k}_i)-\theta_{\tau}^2(\varepsilon_i)}
    {\overline{\tau}\|z^{k}\|_{\infty}}
   \ge \frac{\underline{\tau}^2}{\overline{\tau}}
    \sum_{i\in \mathcal{J}_k}\frac{(z^{k}_i-\varepsilon_i)^2}{\|z^{k}\|_{\infty}}.
  \end{equation}
  For each $i\in\mathcal{I}$, write $\widetilde{z}_i^{k}:=\frac{2(\tau-\mathbb{I}_{\mathbb{R}_{-}}(\varepsilon_i))^2 \varepsilon_i}{\theta_{\tau}(z_i^{k})+\theta_{\tau}(\varepsilon_i)}$.
  From \eqref{ineq-theta-tau}, it follows that
 \begin{align}\label{ineq4-ftau}
  \!\sum_{i\in \mathcal{I}}\frac{\theta_{\tau}^2(z^{k}_i)-\theta_{\tau}^2(\varepsilon_i)}
    {\theta_{\tau}(z^{k}_i)+\theta_{\tau}(\varepsilon_i)}
   &\ge\sum_{i\in \mathcal{I}}\widetilde{z}_i^k(z_i^k-\varepsilon_i)+\underline{\tau}^2
     \sum_{i\in \mathcal{I}}\frac{(z^{k}_i-\varepsilon_i)^2}{\theta_{\tau}(z^{k}_i)+\theta_{\tau}(\varepsilon_i)}\nonumber\\
   &\ge-\|\widetilde{z}^k\|_\infty\|X(\beta^k\!-\beta^*)\|_1+\underline{\tau}^2
     \sum_{i\in \mathcal{I}}\frac{(z^{k}_i-\varepsilon_i)^2}{\overline{\tau}(\|z^{k}\|_{\infty}+\|\varepsilon\|_\infty)}\nonumber\\
   &\!\ge -2\overline{\tau}\big\|X(\beta^k\!-\!\beta^*)\big\|_{1}
     +\frac{\underline{\tau}^2}{\overline{\tau}}
     \sum_{i\in \mathcal{I}}\frac{(z^{k}_i-\varepsilon_i)^2}{\|z^{k}\|_{\infty}\!+\|\varepsilon\|_\infty}
 \end{align}
 where the second inequality is by $\theta_{\tau}(z^{k}_i)\le\overline{\tau}\|z^k\|_{\infty}$
 for $i\in\mathcal{I}$, and the last one is since
 $|\widetilde{z}_i^{k}|\le\frac{2(\tau-\mathbb{I}_{\mathbb{R}_{-}}(\varepsilon_i))^2 |\varepsilon_i|}{\theta_{\tau}(\varepsilon_i)}\le 2\overline{\tau}$ for each $i\in\mathcal{I}$.
 Substituting the inequalities \eqref{ineq3-ftau}-\eqref{ineq4-ftau} into \eqref{ineq2-ftau},
 we obtain that
 \begin{align*}%\label{ineq5-ftau}
  f_{\tau}(y\!-\!X\beta^{k})-\!f_{\tau}(y-\!X\beta^*)
  &\ge\frac{\underline{\tau}^2}{n\overline{\tau}}
     \sum_{i\in \mathcal{J}_k\cup \mathcal{I}}\frac{(z^{k}_i-\varepsilon_i)^2}
     {\|z^{k}\|_{\infty}+\|\varepsilon\|_\infty}-\!\frac{2\overline{\tau}}{n}\|X(\beta^k\!-\beta^*)\|_1\nonumber\\
  &=\frac{\underline{\tau}^2\|X(\beta^k\!-\beta^*)\|^2}{n\overline{\tau}
     (\|z^{k}\|_{\infty}\!+\!\|\varepsilon\|_\infty)}-\!\frac{2\overline{\tau}}{n}\|X(\beta^k\!-\!\beta^*)\|_1.
  \end{align*}
  Combining this inequality and \eqref{ineq1-ftau} and
  recalling that $\|\delta^k\|\le r_k$, we get
  \begin{align}\label{ftau-mainineq}
  \!\frac{\underline{\tau}^2\|X(\beta^k-\beta^*)\|^2}{n\overline{\tau}(\|z^{k}\|_{\infty}\!+\!\|\varepsilon\|_\infty)}
  &\le \lambda\langle v^{k-1},|\beta^*|-|\beta^{k}|\rangle
       +\frac{2\overline{\tau}}{n}\big\|X(\beta^k\!-\beta^*)\big\|_{1}
       +\langle\delta^k,\beta^k\!-\beta^*\rangle\nonumber\\
  %&\le \lambda\langle v^{k-1},|\beta^*|-|\beta^{k}|\rangle
%       +\big(2n^{-1}\overline{\tau}\|X\|_1+r_k\big)\big\|\beta^k\!-\beta^*\big\|_1\nonumber\\
  &\le \lambda\Big(\textstyle{\sum_{i\in S^*}}v_i^{k-1}|\Delta\beta_i^k|
       -\textstyle{\sum_{i\in (S^{k-1})^c}}v_i^{k-1}|\Delta\beta_i^k|\Big)\nonumber\\
  &\quad\ +\big(2n^{-1}\overline{\tau}\|X\|_1+r_k\big)\|\beta^k\!-\beta^*\|_1\nonumber\\
  &=\lambda\Big(\textstyle{\sum_{i\in S^*}}v_i^{k-1}|\Delta\beta_i^k|
       -\textstyle{\sum_{i\in (S^{k-1})^c}}v_i^{k-1}|\Delta\beta_i^k|\Big)\\
  &\quad +\big(2n^{-1}\overline{\tau}\!\|X\|_1\!+r_k\big)
   \big(\|\Delta\beta_{S^{k-1}}^k\|_{1}+\!\|\Delta\beta_{(S^{k-1})^{c}}^k\|_{1}\big)\nonumber
 \end{align}
  Since $S^{k-1}\supset S^*$ and $v_i^{k-1}\in[0.5,1]$ for $i\in(S^{k-1})^{c}$,
  from the last inequality,
 \begin{align*}
  \frac{\underline{\tau}^2\|X(\beta^k\!-\beta^*)\|^2}{n\overline{\tau}(\|z^{k}\|_{\infty}\!+\!\|\varepsilon\|_\infty)}
  &\le\textstyle{\sum_{i\in S^{k-1}}}\big(\lambda v_i^{k-1}
  +2n^{-1}\overline{\tau}\|X\|_1+r_k\big)\big|\Delta\beta_i^k\big|\nonumber\\
  &\quad +\textstyle{\sum_{i\in (S^{k-1})^c}}\big(2n^{-1}\overline{\tau}\|X\|_1+r_k-\lambda/2\big)
  \big|\Delta\beta_i^k\big|\nonumber\\
  &\le\big(\lambda +2n^{-1}\overline{\tau}\!\|X\|_1+r_k\big)
   \big\|\Delta\beta_{S^{k-1}}^k\big\|_1\nonumber\\
  &\quad +\big(2n^{-1}\overline{\tau}\|X\|_1+r_k-\lambda/2\big)
  \big\|\Delta\beta_{(S^{k-1})^{c}}^k\big\|_1.\nonumber
 \end{align*}
 By the nonnegativity of the left hand side and the given assumption on $\lambda$,
 \[
   \big\|\Delta\beta_{(S^{k-1})^{c}}^k\big\|_1
   \le\frac{\lambda+2n^{-1}\overline{\tau}\|X\|_1+r_k}
   {0.5\lambda-2n^{-1}\overline{\tau}\|X\|_1-r_k}\big\|\Delta\beta_{S^{k-1}}^k\big\|_1
  \le 3\big\|\Delta\beta_{S^{k-1}}^k\big\|_1.
 \]
 The desired result follows. The proof is then completed.
 \end{proof}
%-----------------------------------------------------------------------------------
 \begin{lemma}\label{betak-lemma2}
  Suppose that Assumption 1 holds, that $X$ satisfies the $\kappa$-RSC
  over $\mathcal{C}(S^*)$, and that for some $k\ge 1$ there exists an index set
  $S^{k-1}$ with $|S^{k-1}|\le 1.5s^*$ such that $S^{k-1}\supseteq S^*$ and $\max_{i\in(S^{k-1})^c}w_i^{k-1}\le\frac{1}{2}$.
  Then, when $16 \overline{\tau}n^{-1}\|X\|_1+8r_k \le\lambda
  <\frac{\underline{\tau}^2\kappa -2\overline{\tau}\|X\|_{\rm max}
 (2n^{-1}\overline{\tau}\|X\|_1+r_k)|S^{k-1}|}
 {2\overline{\tau}\|X\|_{\rm max}\|v_{S^*}^{k-1}\|_{\infty}|S^{k-1}|}$,
  \[
    \big\|\Delta\beta^{k}\big\|\le\frac{\overline{\tau}\big(\lambda\|v_{S^*}^{k-1}\|_{\infty}
    +2n^{-1}\overline{\tau}\!\|X\|_1+r_k\big)\sqrt{|S^{k-1}|}\|\varepsilon\|_\infty}
     {\underline{\tau}^2\kappa -2\overline{\tau}\|X\|_{\rm max}\big(\lambda\|v_{S^*}^{k-1}\|_{\infty}
      +2n^{-1}\overline{\tau}\!\|X\|_1+r_k\big)|S^{k-1}|}.
  \]
 \end{lemma}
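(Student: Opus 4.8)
The plan is to combine the cone inequality from Lemma \ref{betak-lemma1} with the $\kappa$-RSC and the energy inequality already derived inside the proof of that lemma. First I would observe that, under the stated hypotheses, the lower bound $\lambda\ge 16\overline{\tau}n^{-1}\|X\|_1+8r_k$ together with the conditions on $S^{k-1}$ lets me invoke Lemma \ref{betak-lemma1}, giving $\|\Delta\beta^{k}_{(S^{k-1})^c}\|_1\le 3\|\Delta\beta^{k}_{S^{k-1}}\|_1$. Since $S^*\subseteq S^{k-1}$ and $|S^{k-1}|\le 1.5s^*$, this places $\Delta\beta^k=\beta^k-\beta^*$ in $\mathcal{C}(S^*)$, so the RSC \eqref{RSC} applies and yields $\|X\Delta\beta^k\|^2\ge 2n\kappa\|\Delta\beta^k\|^2$.

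Next I would reuse the intermediate inequality \eqref{ftau-mainineq} derived in the proof of Lemma \ref{betak-lemma1}, whose left-hand side is $\frac{\underline{\tau}^2\|X\Delta\beta^k\|^2}{n\overline{\tau}(\|z^k\|_\infty+\|\varepsilon\|_\infty)}$, and split its right-hand side over $S^{k-1}$ and $(S^{k-1})^c$. Using $v_i^{k-1}\ge\frac12$ for $i\in(S^{k-1})^c$ together with the lower bound on $\lambda$ makes the whole $(S^{k-1})^c$-contribution nonpositive, while $\sum_{i\in S^*}v_i^{k-1}|\Delta\beta_i^k|\le\|v_{S^*}^{k-1}\|_\infty\|\Delta\beta^{k}_{S^{k-1}}\|_1$. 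Writing $A:=\lambda\|v_{S^*}^{k-1}\|_\infty+2n^{-1}\overline{\tau}\|X\|_1+r_k$, the right-hand side collapses to $A\|\Delta\beta^{k}_{S^{k-1}}\|_1\le A\sqrt{|S^{k-1}|}\,\|\Delta\beta^k\|$. Combining this with the RSC lower bound on the left-hand side and dividing through by $\|\Delta\beta^k\|$ (the conclusion being trivial when $\Delta\beta^k=0$) gives $2\underline{\tau}^2\kappa\|\Delta\beta^k\|\le\overline{\tau}A\sqrt{|S^{k-1}|}(\|z^k\|_\infty+\|\varepsilon\|_\infty)$.

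The remaining task is to control $\|z^k\|_\infty$. Since $z^k=\varepsilon-X\Delta\beta^k$, I would bound $\|z^k\|_\infty+\|\varepsilon\|_\infty\le 2\|\varepsilon\|_\infty+\|X\|_{\max}\|\Delta\beta^k\|_1$, and then apply the cone inequality once more as $\|\Delta\beta^k\|_1\le 4\|\Delta\beta^{k}_{S^{k-1}}\|_1\le 4\sqrt{|S^{k-1}|}\,\|\Delta\beta^k\|$. Substituting this back produces a linear inequality in $\|\Delta\beta^k\|$; collecting the two terms proportional to $\|\Delta\beta^k\|$ gives $(2\underline{\tau}^2\kappa-4\overline{\tau}\|X\|_{\max}A|S^{k-1}|)\|\Delta\beta^k\|\le 2\overline{\tau}A\sqrt{|S^{k-1}|}\,\|\varepsilon\|_\infty$, and the factors of $2$ and $4$ cancel to the stated bound.

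The main obstacle is precisely this last coupling: the $\|z^k\|_\infty$ term feeds $\|\Delta\beta^k\|$ back into the right-hand side, so I must keep every estimate linear in $\|\Delta\beta^k\|$ and verify that the \emph{upper} restriction on $\lambda$ is exactly what forces $2\overline{\tau}\|X\|_{\max}A|S^{k-1}|<\underline{\tau}^2\kappa$, i.e. a strictly positive denominator. Tracking the constants (the factor $4$ from the cone inequality and the factor $2$ from the RSC) so that they reduce to the clean stated form is the delicate bookkeeping rather than any conceptual difficulty.
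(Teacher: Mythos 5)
Your proposal is correct and follows essentially the same route as the paper's proof: invoke Lemma \ref{betak-lemma1} for the cone condition, feed $\Delta\beta^k\in\mathcal{C}(S^*)$ into the RSC, reuse the intermediate inequality \eqref{ftau-mainineq} with $v_i^{k-1}\ge\frac12$ on $(S^{k-1})^c$, and close the loop on $\|z^k\|_\infty$ via $\|X\Delta\beta^k\|_\infty\le 4\|X\|_{\max}\|\Delta\beta^k_{S^{k-1}}\|_1$, with the upper restriction on $\lambda$ guaranteeing a positive coefficient. The only cosmetic difference is that you divide by $\|\Delta\beta^k\|$ before substituting the bound on $\|z^k\|_\infty+\|\varepsilon\|_\infty$, whereas the paper multiplies through first and divides at the end; the constants and the final bound are identical.
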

 \begin{proof}
  Notice that $\|z^k\|_\infty+\|\varepsilon\|_\infty
  \le\|X\Delta\beta^k\|_{\infty}+2\|\varepsilon\|_\infty$. So, we have
  \[
   \frac{\underline{\tau}^2\|X(\beta^k-\beta^*)\|^2}{n\overline{\tau}(\|z^{k}\|_{\infty}+\|\varepsilon\|_\infty)}
   \ge\frac{\underline{\tau}^2\|X\Delta\beta^k\|^2}{n\overline{\tau}(\|X\Delta\beta^k\|+2\|\varepsilon\|_\infty)}.
  \]
  Together with \eqref{ftau-mainineq} and
  $v_i^{k-1}\in[0.5,1]$ for $i\in(S^{k-1})^{c}$, it follows that
 \begin{align*}
 \frac{\underline{\tau}^2\|X\Delta\beta^k\|^2}{n\overline{\tau}(\|X\Delta\beta^k\|_{\infty}\!+\!2\|\varepsilon\|_\infty)}
  &\le\lambda\sum_{i\in S^*}v_i^{k-1}|\Delta\beta_i^k|
           -\frac{\lambda}{2}\!\sum_{i\in (S^{k-1})^c}\!|\Delta\beta_i^k|\nonumber\\
  &\quad +\big(2n^{-1}\overline{\tau}\!\|X\|_1+r_k\big)
   \big(\|\Delta\beta_{S^{k-1}}^k\|_{1}+\|\Delta\beta_{(S^{k-1})^{c}}^k\|_{1}\big)\\
  &\le \Big(\lambda\|v_{S^*}^{k-1}\|_{\infty}+2n^{-1}\overline{\tau}\!\|X\|_1+r_k\Big)
     \|\Delta\beta_{S^{k-1}}^k\|_{1}
%  &\le \Big(\lambda\sqrt{\textstyle{\sum_{i\in S^*}}(v_i^{k-1})^2}
%   +\frac{2\overline{\tau}\!\|X\|_1+nr_k}{n}\sqrt{|S^{k-1}|}\Big)\big\|\Delta\beta_{S^{k-1}}^k\big\|.
 \end{align*}
 where the last inequality is due to $\lambda>16n^{-1}\overline{\tau}\|X\|_1+8r_k$.
 By Lemma \ref{betak-lemma1},
 $\|\Delta\beta^{k}_{(S^{k-1})^c}\|_1\le 3\|\Delta\beta^{k}_{S^{k-1}}\|_1$.
 By the given assumption, $\Delta\beta^{k}\in\mathcal{C}(S^*)$.
 From the $\kappa$-RSC property of $X$ on $\mathcal{C}(S^*)$, it follows that
 $\|X\Delta\beta^k\|^2\ge 2n\kappa\|\Delta\beta^k\|^2$. Then, we obtain
 \[
   \frac{2\underline{\tau}^2\kappa\|\Delta\beta^k\|^2}{\overline{\tau}\big(\|X\Delta\beta^k\|_{\infty}\!+2\|\varepsilon\|_\infty\big)}
   \le \Big(\lambda\|v_{S^*}^{k-1}\|_{\infty}+\frac{2\overline{\tau}\!\|X\|_1}{n}+r_k\Big)
   \big\|\Delta\beta_{S^{k-1}}^k\big\|_1.
 \]
  Multiplying this inequality with
  $\overline{\tau}\big(\|X\Delta\beta^k\|_{\infty}\!+2\|\varepsilon\|_\infty\big)$ yields that
  \begin{align*}
   2\underline{\tau}^2\kappa\|\Delta\beta^k\|^2
   &\le \overline{\tau}\big(\|X\Delta\beta^k\|_{\infty}\!+2\|\varepsilon\|_\infty\big)
       \Big(\lambda\|v_{S^*}^{k-1}\|_{\infty}+\frac{2\overline{\tau}\!\|X\|_1}{n}+r_k\Big)
       \big\|\Delta\beta_{S^{k-1}}^k\big\|_1\\
   &\le \overline{\tau}\|X\Delta\beta^k\|_{\infty} \Big(\lambda\|v_{S^*}^{k-1}\|_{\infty}
       +2n^{-1}\overline{\tau}\!\|X\|_1+r_k\Big)
       \big\|\Delta\beta_{S^{k-1}}^k\big\|_1\\
   &\quad + 2\overline{\tau}\|\varepsilon\|_\infty
   \Big(\lambda\|v_{S^*}^{k-1}\|_{\infty}
       +2n^{-1}\overline{\tau}\!\|X\|_1+r_k\Big)
       \big\|\Delta\beta_{S^{k-1}}^k\big\|_1.
  \end{align*}
  Since $\|X\Delta\beta^k\|_{\infty}\le \|X\|_{\rm max}\|\Delta\beta^k\|_1$,
  along with $\|\Delta\beta^{k}_{(S^{k-1})^c}\|_1\le 3\|\Delta\beta^{k}_{S^{k-1}}\|_1$,
  we have $\|X\Delta\beta^k\|_{\infty}\le 4\|X\|_{\rm max}\|\Delta\beta_{S^{k-1}}^k\|_1$.
  Thus, from the last inequality,
  \begin{align*}
   2\underline{\tau}^2\kappa\|\Delta\beta^k\|^2
   &\le 4\overline{\tau}\|X\|_{\rm max}\Big(\lambda\|v_{S^*}^{k-1}\|_{\infty}
    +2n^{-1}\overline{\tau}\!\|X\|_1+r_k\Big)
       \big\|\Delta\beta_{S^{k-1}}^k\big\|_1^2\\
   &\quad +2\overline{\tau}\Big(\lambda\|v_{S^*}^{k-1}\|_{\infty}+2n^{-1}\overline{\tau}\!\|X\|_1+r_k\Big)
    \big\|\Delta\beta_{S^{k-1}}^k\big\|_1\|\varepsilon\|_{\infty}\\
   &\le 4\overline{\tau}\|X\|_{\rm max}\Big(\lambda\|v_{S^*}^{k-1}\|_{\infty}+\frac{2\overline{\tau}\!\|X\|_1}{n}+r_k\Big)
       |S^{k-1}|\big\|\Delta\beta_{S^{k-1}}^k\big\|^2\\
   &\quad +2\overline{\tau}\Big(\lambda\|v_{S^*}^{k-1}\|_{\infty}+2n^{-1}\overline{\tau}\!\|X\|_1+r_k\Big)
    \sqrt{|S^{k-1}|}\big\|\Delta\beta_{S^{k-1}}^k\big\|\|\varepsilon\|_{\infty}\\
   &\le 4|S^{k-1}|\overline{\tau}\|X\|_{\rm max}\Big(\lambda\|v_{S^*}^{k-1}\|_{\infty}
   +\frac{2\overline{\tau}\!\|X\|_1}{n}+r_k\Big)
        \big\|\Delta\beta^k\big\|^2\\
   &\quad +2\overline{\tau}\Big(\lambda\|v_{S^*}^{k-1}\|_{\infty}+\frac{2\overline{\tau}\!\|X\|_1}{n}+r_k\Big)
    \sqrt{|S^{k-1}|}\big\|\Delta\beta_{S^{k-1}}^k\big\|\|\varepsilon\|_{\infty}.
  \end{align*}
  After a suitable rearrangement, this inequality is equivalent to saying that
 \begin{align*}
  &\Big[2\underline{\tau}^2\kappa -4\overline{\tau}\|X\|_{\rm max}\big(\lambda\|v_{S^*}^{k-1}\|_{\infty}
  +2n^{-1}\overline{\tau}\!\|X\|_1+r_k\big)|S^{k-1}|\Big]\|\Delta\beta^k\|^2\\
  &\le 2\overline{\tau}\Big(\lambda\|v_{S^*}^{k-1}\|_{\infty}+2n^{-1}\overline{\tau}\!\|X\|_1+r_k\Big)
    \sqrt{|S^{k-1}|}\big\|\Delta\beta^k\big\|\|\varepsilon\|_{\infty},
 \end{align*}
 which by $\lambda<\frac{\underline{\tau}^2\kappa -2\overline{\tau}\|X\|_{\rm max}
 (2n^{-1}\overline{\tau}\|X\|_1+r_k)|S^{k-1}|}
 {2\overline{\tau}\|X\|_{\rm max}\|v_{S^*}^{k-1}\|_{\infty}|S^{k-1}|}$
 implies the result.
 \end{proof}

 \noindent
 {\bf\large Proof of Theorem 2}
 \begin{proof}
  For each $k\in\mathbb{N}$, let $S^{k-1}\!:=S^*\cup\{i\notin S^*\!:w^{k-1}_i>\frac{1}{2}\}$.
  If $|S^{k-1}|\leq1.5s^*$, by invoking Lemma \ref{betak-lemma2}
  and using the given assumption, we have
  \begin{align}\label{equa-41}
   \!\big\|\beta^{k}\!-\beta^*\big\|
   &\le\frac{\overline{\tau}\big(\lambda\|v_{S^*}^{k-1}\|_{\infty}
      +2n^{-1}\overline{\tau}\|X\|_1+r_k\big)\sqrt{|S^{k-1}|}\|\varepsilon\|_\infty}
     {\underline{\tau}^2\kappa -2\overline{\tau}\|X\|_{\rm max}\big(\lambda\|v_{S^*}^{k-1}\|_{\infty}
      +2n^{-1}\overline{\tau}\|X\|_1+r_k\big)|S^{k-1}| }\nonumber\\
   &\le \frac{\overline{\tau}\big(\lambda\|v_{S^*}^{k-1}\|_{\infty}
      +2n^{-1}\overline{\tau}\|X\|_1+r_k\big)\sqrt{|S^{k-1}|}\|\varepsilon\|_\infty}
     {\underline{\tau}^2\kappa -3\overline{\tau}\|X\|_{\rm max}\big(\lambda
      +2n^{-1}\overline{\tau}\|X\|_1+\epsilon\big) s^* }\nonumber\\
   &\le c\overline{\tau}\big(\lambda\|v_{S^*}^{k-1}\|_{\infty}
      +2n^{-1}\overline{\tau}\|X\|_1+r_k\big)\sqrt{|S^{k-1}|}\|\varepsilon\|_\infty
  \end{align}
   where the second inequality is by the nondecreasing of $t\mapsto \frac{c_2+t}{c_1-t}$
  for constants $c_1,c_2>0$, and the last one is by the restriction on $\lambda$.
  Since $2n^{-1}\overline{\tau}\|X\|_1+r_k\le\!\frac{\lambda}{8}$
  and $\|v_{S^*}^{k-1}\|_{\infty}\le 1$, it follows that
  $\big\|\beta^{k}-\beta^*\big\|
    \le\frac{9c\overline{\tau}\lambda\|\varepsilon\|_\infty}{8{\color{blue}n }}\sqrt{1.5s^*}$,
  and the desired result holds. So, it suffices to argue that $|S^{k-1}|\le1.5s^*$
  for all $k\in\mathbb{N}$. When $k=1$, the statement holds trivially since $w^{0}=0$
  implies $S^0=S^*$. Assuming that $|S^{k-1}|\le1.5s^*$ holds for $k=l$ with $l\ge 1$,
  we prove that it holds for $k=l+1$. Indeed,
  since $S^{l}\setminus S^*=\big\{i\notin S^*\!:w_i^{l}>\frac{1}{2}\big\}$,
  we have $w_i^{l}\in(\frac{1}{2},1]$ for $i\in S^{l}\setminus S^*$. Together with
  formula (3.3), we deduce that $\rho_l|\beta_i^{l}|\ge 1$, and hence
  the following inequality holds:
  \[
    \sqrt{|S^{l}\setminus S^*|}\le\sqrt{\sum_{i\in S^{l}\setminus S^*}\rho_l^2|\beta_i^{l}|^2}
    =\sqrt{\sum_{i\in S^{l}\setminus S^*}\rho_l^2|\beta_i^{l}-\beta_i^*|^2}.
  \]
  Since the statement holds for $k\!=l$, we get
  $\|\beta^{l}\!-\!\beta^*\|
  \le\!\frac{9c\overline{\tau}\lambda\|\varepsilon\|_\infty\sqrt{1.5s^*}}{8}$. So, it holds that
  \begin{equation}\label{Sl-equa}
    \sqrt{|S^{l}\setminus S^*|}\le \rho_l\|\beta^{l}-\beta^*\|
    \le \frac{9c\overline{\tau}\rho_l\lambda\|\varepsilon\|_\infty}{8}\sqrt{1.5s^*}
    \le \sqrt{0.5s^*}
  \end{equation}
  where the last inequality is due to $\rho_l\lambda\le\rho_3\lambda
  \le\frac{8}{9\sqrt{3}c\overline{\tau}\|\varepsilon\|_\infty}$.
  The inequality \eqref{Sl-equa} implies $|S^{l}|\leq1.5s^*$.
  This shows that the statement follows.
 \end{proof}

  To present the proof of Theorem 3, we need the following lemma
  which upper bounds $ \|v_{S^*}^{k}\|_{\infty}$, whose proof
  is given in Lemma 3 of \cite{TaoPanBi19}.
%--------------------------------------------------------------------------------
 \begin{lemma}\label{FLambdk}
  Let $F^k$ and $\Lambda^k$ be the index sets defined by (4.9). Then,
  \[
    \|v^k_{S^*}\|_{\infty}
    \le\max_{i\in S^*}\mathbb{I}_{\Lambda^k}(i)+\max_{i\in S^*}\mathbb{I}_{F^k}(i)
    \quad{\rm for\ each}\ k\in\{0\}\cup\mathbb{N}.
  \]
 \end{lemma}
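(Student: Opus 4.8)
The plan is to reduce the claim to a coordinatewise implication and then read the conclusion off the closed-form expression of $w_i^k$ in \eqref{wik}. First I would record that, since $w_i^k\in[0,1]$, each coordinate $v_i^k=1-w_i^k$ lies in $[0,1]$, so that $\|v^k_{S^*}\|_\infty\le 1$. Consequently the asserted inequality is immediate whenever its right-hand side is at least $1$, and the only situation demanding work is when both $\max_{i\in S^*}\mathbb{I}_{\Lambda^k}(i)$ and $\max_{i\in S^*}\mathbb{I}_{F^k}(i)$ vanish. It therefore suffices to prove the following pointwise statement for $i\in S^*$: if $i\notin\Lambda^k$ and $i\notin F^k$, then $v_i^k=0$.

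For this pointwise step I would use \eqref{wik} directly. From that formula $w_i^k=1$, hence $v_i^k=0$, exactly when $\frac{(a+1)\rho_k|\beta_i^k|-2}{2(a-1)}\ge 1$, i.e. when $|\beta_i^k|\ge\frac{2a}{(a+1)\rho_k}$. Now suppose $i\notin\Lambda^k$ and $i\notin F^k$. By \eqref{FLambdak}, $i\notin\Lambda^k$ gives $|\beta_i^*|>\frac{4a}{(a+1)\rho_k}$, while $i\notin F^k$ gives $\big||\beta_i^k|-|\beta_i^*|\big|<\frac{1}{\rho_k}$. Combining these through the reverse triangle inequality yields $|\beta_i^k|>|\beta_i^*|-\frac{1}{\rho_k}>\frac{4a}{(a+1)\rho_k}-\frac{1}{\rho_k}=\frac{3a-1}{(a+1)\rho_k}$. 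Since $a>1$ we have $3a-1\ge 2a$, whence $|\beta_i^k|>\frac{2a}{(a+1)\rho_k}$ and therefore $v_i^k=0$, as claimed.

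To conclude for $k\in\mathbb{N}$, I would pick an index $i^{\star}\in S^*$ at which $v_i^k$ attains its maximum over $S^*$. If $v_{i^{\star}}^k=0$ the bound is trivial; otherwise the coordinatewise implication just established forces $i^{\star}\in\Lambda^k\cup F^k$, so that $\max_{i\in S^*}\mathbb{I}_{\Lambda^k}(i)+\max_{i\in S^*}\mathbb{I}_{F^k}(i)\ge 1\ge v_{i^{\star}}^k=\|v^k_{S^*}\|_\infty$. The boundary case $k=0$ is handled separately, because \eqref{wik} describes the iterates $k\ge 1$ rather than the user-supplied initialization $w^0$: here $F^0=S^*$ by definition, so $\max_{i\in S^*}\mathbb{I}_{F^0}(i)=1$ (for $S^*\neq\varnothing$), which already dominates $\|v^0_{S^*}\|_\infty\le 1$.

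The computation carries no genuine difficulty; the single point requiring care is the bookkeeping of the threshold constants, namely verifying that the residual margin $\frac{4a}{(a+1)\rho_k}-\frac{1}{\rho_k}$ clears the activation level $\frac{2a}{(a+1)\rho_k}$. This is precisely where the hypothesis $a>1$ enters, through the elementary inequality $3a-1\ge 2a$, and it is the only place where the particular factor $4a$ in the definition of $\Lambda^k$ is used. The mild $k=0$ case, in which the weight is the initialization and \eqref{wik} does not apply, is the other point to keep track of.
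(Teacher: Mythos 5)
Your proof is correct. The paper itself does not prove this lemma in-text but defers it to Lemma 3 of \cite{TaoPanBi19}; your direct argument --- reducing to the case where both indicators vanish, and then using \eqref{wik} together with $|\beta_i^*|>\frac{4a}{(a+1)\rho_k}$ and $\big||\beta_i^k|-|\beta_i^*|\big|<\frac{1}{\rho_k}$ to get $|\beta_i^k|>\frac{3a-1}{(a+1)\rho_k}\ge\frac{2a}{(a+1)\rho_k}$ and hence $w_i^k=1$ --- is exactly the natural route, and your separate treatment of $k=0$ via $F^0=S^*$ closes the remaining case.
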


 \noindent
 {\bf\large Proof of Theorem 3}:
 \begin{proof}
  For each $k\in\mathbb{N}$, define $S^{k-1}:= S^*\cup\{i\notin S^*\!: w^{k-1}_i>\frac{1}{2}\}$.
  Since the conclusion holds for $k=1$, it suffices to consider $k\ge 2$.
  By the proof of Theorem 2, $|S^{k-1}|\le1.5s^*$
  for all $k\in\mathbb{N}$. Moreover, by \eqref{Sl-equa} and $\rho_k\ge 1$,
  \begin{align}\label{equa-43}
   \sqrt{|S^{k-1}|}
   &=\sqrt{|S^{*}|+|S^{k-1}\setminus S^*|}
   \le\sqrt{s^*}+\sqrt{|S^{k-1}\setminus S^*|}\nonumber\\
  &\le\sqrt{s^*}+\big(2n^{-1}\overline{\tau}\|X\|_1+r_k\big)^{-1}
      \frac{\lambda\rho_{k-1}}{8}\big\|\beta^{k-1}-\beta^*\big\|
  \end{align}
  where the first inequality is due to $\sqrt{a+b}\le\sqrt{a}+\sqrt{b}$ for $a,b\ge0$,
  the last one is due to $\lambda\ge 16n^{-1}\overline{\tau}\|X\|_1+8r_k$.
  From \eqref{equa-41} and Lemma \ref{FLambdk}, we have
  \begin{align*}
   \|\beta^k-\beta^*\|
  &\le c\overline{\tau}\|\varepsilon\|_{\infty}\sqrt{|S^{k-1}|}
      \big[\lambda\big(\max_{i\in S^*}\mathbb{I}_{\Lambda^{k-1}}(i)+\max_{i\in S^*}\mathbb{I}_{F^{k-1}}(i)\big)\big]\\
  &\quad+c\overline{\tau}\|\varepsilon\|_{\infty}\sqrt{|S^{k-1}|}
       \big[2n^{-1}\overline{\tau}\|X\|_1+r_k\big]\\
  &\le c\overline{\tau}\|\varepsilon\|_{\infty}
      \Big[\lambda\sqrt{1.5s^*}\max_{i\in S^*}\mathbb{I}_{\Lambda^0}(i)
      +\lambda\sqrt{1.5s^*}\rho_{k-1}\|\beta^{k-1}\!-\beta^*\|\\
  &\quad +\big(2n^{-1}\overline{\tau}\|X\|_1+r_k\big)\sqrt{|S^{k-1}|}\Big]
  \end{align*}
 where the last inequality is since
 \(
 \max_{i\in S^*}\mathbb{I}_{F^{k-1}}(i)\leq \max_{i\in S^*}\rho_{k-1}\big||\beta_i^{k-1}|-|\beta_i^*|\big|\leq\rho_{k-1}\|\beta^{k-1}-\beta^*\|.
 \)
 Substituting \eqref{equa-43} into this inequality yields
 \begin{align*}
  \|\Delta\beta^{k}\|
  &\le c\overline{\tau}\|\varepsilon\|_{\infty}\sqrt{s^*}
    \big(2n^{-1}\overline{\tau}\|X\|_1+r_k\big)
     +c\overline{\tau}\lambda\|\varepsilon\|_{\infty}\sqrt{1.5s^*}\max_{i\in S^*}\mathbb{I}_{\Lambda^0}(i)\nonumber\\
  &\quad +c\overline{\tau}\|\varepsilon\|_{\infty}\rho_{k-1}\lambda(\sqrt{1.5s^*}+1/8)\|\beta^{k-1}-\beta^*\|\nonumber\\
  &\le 2cn^{-1}\overline{\tau}^2\|\varepsilon\|_{\infty}\sqrt{s^*}\!\|X\|_1
     + c\overline{\tau}\|\varepsilon\|_{\infty}\sqrt{s^*}r_k\nonumber\\
  &\quad +c\overline{\tau}\lambda\|\varepsilon\|_{\infty}\sqrt{1.5s^*}\max_{i\in S^*}\mathbb{I}_{\Lambda^0}(i)
         +\!\frac{\sqrt{3}}{3}\|\Delta\beta^{k-1}\|
 \end{align*}
  where the relation $\rho_{k-1}\lambda\le\rho_3\lambda \le [\sqrt{3}c\overline{\tau}\|\varepsilon\|_\infty(\sqrt{1.5s^*}+1/8)]^{-1}$ is used.
  The desired result follows by using the last recursion inequality.
 \end{proof}

  \noindent
 {\bf\large Appendix C}

 We describe the iterates of the semismooth Newton method
 and those of the semi-proximal ADMM in \cite{Gu16}.
 The iterates of the semismooth Newton method are as follows.
%-------------------------------------------------------------------------------------
 \begin{algorithm}[h]
 \caption{\label{SNCG}{\bf\ \ A semismooth Newton method}}
 \textbf{Initialization:} Fix $k$ and $j$. Choose $0<c_1<c_2<1,\mu=10^{-5}$ and $u^0=0$.\\
 \textbf{while} the stopping conditions are not satisfied \textbf{do}
 \begin{enumerate}
  \item  Choose $U^l\in\mathcal{U}_j(u^l),V^l\in\mathcal{V}_j(u^l)$
         and set $W^l=\gamma_{2,j}^{-1}U^l+\gamma_{1,j}^{-1}XV^lX^{\mathbb{T}}$.
         Then, seek a solution $d^l\in\mathbb{R}^n$ to the following linear system
         \vspace{-0.3cm}
        \begin{equation}\label{SNCG-dj}
         (W^{l}+\mu I)d=-\Phi_{k,j}(u^l).
         \vspace{-0.3cm}
         \end{equation}
  \item Search the step-size $\alpha_l$ in the direction $d^l$ to satisfy
  \vspace{-0.3cm}
        \begin{align*}
         \Psi_{k,j}(u^l+\alpha_ld^l)\leq\Psi_{k,j}(u^l)+c_1\alpha_l\langle\nabla\Psi_{k,j}(u^l),d^l\rangle,\\
         |\langle\nabla\Psi_{k,j}(u^l+\alpha_ld^l),d^l\rangle|\le c_2|\langle\nabla\Psi_{k,j}(u^l),d^l\rangle|.
        \end{align*}
 \item Set $u^{l+1}=u^l+\alpha_ld^l$ and $l\leftarrow l+1$, and then go to Step 1.
 \end{enumerate}
 \vspace{-0.3cm}
 \textbf{end while}
 \end{algorithm}	

 Notice that the subproblem (3.1) can be equivalently written as
 \begin{equation}\label{Eweighted-L1}
  \min_{\beta\in\mathbb{R}^p,z\in\mathbb{R}^n}\Big\{f_{\tau}(z)+\|\omega^{k-1}\circ \beta\|_1
  \ \ {\rm s.t.}\ \ X\beta+z-y=0\Big\}
 \end{equation}
 whose dual problem, after an elementary calculation, takes the form of
 \begin{equation}\label{DEweighted-L1}
  \min_{u\in\mathbb{R}^n}\Big\{f_{\tau}^*(u)+\langle u,y\rangle
      \ \ {\rm s.t.}\ \ |(X^{\mathbb{T}}u)_i|\le \omega_i^{k-1},\ i=1,\ldots,p\Big\}.
 \end{equation}
 For a given $\sigma>0$, the augmented Lagrangian function of \eqref{Eweighted-L1}
 is given by
 \[
    L_{\sigma}(\beta,z,u):=f_{\tau}(z)+\|\omega^{k-1}\circ \beta\|_1
    +\langle u,X\beta+z-y\rangle+\frac{\sigma}{2}\|X\beta+z-y\|^2.
 \]
 The iterate steps of the semi-proximal ADMM in \cite{Gu18} are described as follows.
%-------------------------------------------------------------------------------------------
 \begin{algorithm}[h]
 \caption{\label{sPADMM}{\bf\ \ Semi-proximal ADMM for solving \eqref{Eweighted-L1}}}
 \textbf{Initialization:} Choose $\sigma>0,\gamma=\sigma\|X^{\mathbb{T}}X\|$ and
 $\varrho\in(1,\frac{\sqrt{5}+1}{2})$, and an initial point $(\beta^0,z^0,u^0)\in\mathbb{R}^p\times
 \mathbb{R}^n\times\mathbb{R}^n$ with $\beta^0=\beta^{k-1}$. Set $j=0$.\\
 \textbf{while} the stopping conditions are not satisfied \textbf{do}
 \begin{enumerate}
  \item  Compute the following convex minimization problem
         \begin{subequations}
         \begin{align}\label{ADMM-subprob1}
          \beta^{j+1}&=\mathop{\arg\min}_{\beta\in \mathbb{R}^{p}}L_{\sigma}(\beta,z^j,u^j)
          +\frac{1}{2}\|\beta-\beta^j\|_{\gamma I-\sigma X^{\mathbb{T}}X}^2,\\
         \label{ADMM-subprob2}
          z^{j+1}&=\mathop{\arg\min}_{z\in\mathbb{R}^{n}}L_{\sigma}(\beta^{j+1},z,u^j).
         \end{align}
         \end{subequations}
  \item Update the multiplier by $u^{j+1}=u^j+\varrho\sigma(X\beta^{j+1}+z^{j+1}-y)$.

  \item Set~$j\leftarrow j+1$, and then go to Step 1.
 \end{enumerate}
 \vspace{-0.3cm}
 \textbf{end while}
 \end{algorithm}
 \begin{remark}\label{remark-ADMM}
  {\bf(i)} Algorithm \ref{sPADMM} has a little difference from Algorithm 1 of \cite{Gu18}
  since here the semi-proximal term $\frac{1}{2}\|\beta-\beta^j\|_{\gamma I-\sigma X^{\mathbb{T}}X}^2$,
  rather than $\frac{1}{2}\|\beta-\beta^j\|_{\sigma(\gamma I-X^{\mathbb{T}}X)}^2$, is used.
  Let $h^j=\!\gamma\beta^j+\sigma X^{\mathbb{T}}(\!X\beta^j+z^j-y+u^j/\sigma)$.
  Problems \eqref{ADMM-subprob1} and \eqref{ADMM-subprob2}
  have a closed form solution:
  \begin{align*}
   \beta^{j+1}&={\rm sign}\big(\gamma^{-1}h^j\big)\max\big(|\gamma^{-1}h^j|-\gamma^{-1}\omega^{k-1},0\big),\\
   z^{j+1}&=\mathcal{P}_{\sigma^{-1}}f_{\tau}(y-X\beta^{j+1}-\sigma^{-1}u^j).
  \end{align*}

  \vspace{-0.3cm}
  \noindent
  {\bf(ii)} During our implementation of Algorithm \ref{sPADMM},
  we adjust $\sigma$ dynamically by the ratio of the primal and dual infeasibility.
  By comparing the first-order optimality conditions of \eqref{ADMM-subprob1} and
  \eqref{ADMM-subprob2} with those of \eqref{Eweighted-L1} and using the multiplier
  updating step, we measure the primal and infeasibility and
  the dual gap at $(\beta^{j},z^{j},u^{j})$ in terms of
  $\epsilon_{{\rm pinf}}^{j},\epsilon_{{\rm dinf}}^{j}$ and
  $\epsilon_{{\rm gap}}^{j}$, respectively:
  \begin{subequations}
   \begin{align}\label{dual-gap1}
    \epsilon_{{\rm dinf}}^{j}:=\frac{\sqrt{\|\zeta^j\|^2
   +\|(\varrho^{-1}\!-1)(u^{j}\!-\!u^{j-1})\|^2}}{1+\|y\|},\qquad\quad\\
   \label{dual-gap2}
   \epsilon_{{\rm pinf}}^{j}
   :=\frac{\|u^{j}-u^{j-1}\|}{\varrho\sigma(1+\|y\|)},\quad
   \epsilon_{{\rm gap}}^{j}\!:=\!\frac{|\omega_{\rm prim}^j+\omega_{\rm dual}^j|}
  {\max\big(1,0.5(\omega_{\rm prim}^j+\omega_{\rm dual}^j)\big)}
  \end{align}
  \end{subequations}
  where $\zeta^j\!:=X^{\mathbb{T}}(u^{j}\!-\!u^{j-1}\!-\!\sigma(X\beta^{j-1}\!-y+z^{j-1}))-\gamma(\beta^{j}\!-\!\beta^{j-1})$,
  and $\omega_{\rm prim}^j$ and $\omega_{\rm dual}^j$ are the objective
  values of \eqref{Eweighted-L1} and \eqref{DEweighted-L1}
  at $(\beta^{j},z^{j},u^{j})$. Different from \cite{Gu18}, when
  $\max(\epsilon_{{\rm pinf}}^{j},\epsilon_{{\rm dinf}}^{j},\epsilon_{{\rm gap}}^{j})
  \le\epsilon_{\rm ADMM}$ or $j>j_{\rm max}$, we terminate Algorithm \ref{sPADMM}.
  By comparing with the optimality conditions of \eqref{ADMM-subprob1}-\eqref{ADMM-subprob2}
  with those of \eqref{Eweighted-L1}, such a stopping criterion ensures that
  the obtained $(\beta^{j},z^{j},u^{j})$ is an approximate primal-dual solution pair.
  \end{remark}

 \noindent
 {\bf\large Appendix D}\\
 %------------------------------------------------------------------------------------
 \noindent{\bf D.1. Performance comparisons of three solvers}

  We shall test the performance of MSCRA\_IPM, MSCRA\_ADMM and MSCRA\_PPA for
  computing the estimator $\widehat{\beta}$ in the same setting as in \cite{Fan14a}
  and \cite{Gu18}. Specifically, with $\beta^*\!=(2,\,0,\,1.5,\,0,\,0.8,\,0,\,0,\,1,\,0,\,1.75,\,0,\,0,\,0.75,\,0,\,0,\,0.3,\,{\bf 0}_{p-16}^{\mathbb{T}})^{\mathbb{T}}$ for $(p,n)=(1000,200)$,
  we obtain $n$ observations from (2.1), where the noise $\varepsilon$ comes
  from the distributions in \cite{Gu18}, including {\bf (1)} the normal distribution $N(0,2)$;
  {\bf (2)} the mixture normal distribution $0.9N(0,1)+0.1N(0,25)$,
  denoted by ${\rm MN}_1$; {\bf (3)} the mixture normal distribution
  $N(0,\sigma^2)$ with $\sigma\!\sim\!{\rm Unif}(1,5)$, denoted by ${\rm MN}_2$;
  {\bf (4)} the Laplace distribution with density $d(u)=0.5\exp(-|u|)$;
  {\bf (5)} the scaled Student's $t$-distribution with $4$ degrees of freedom
  $\sqrt{2}\times t_4$; and {\bf (6)} the Cauchy distribution with density
  $d(u)=\frac{1}{\pi(1+u^2)}$. For the covariance matrix $\Sigma_x$,
  we also consider those scenarios from \cite{Gu18}, including
  $\Sigma_x=I$; $\Sigma_x=(0.5^{|i-j|})_{ij}$ and $(0.8^{|i-j|})_{ij}$,
  denoted by ${\rm AR}_{0.5}$ and ${\rm AR}_{0.8}$; and
  $\Sigma_x\!=(\alpha+\!(1-\!\alpha)\mathbb{I}_{\{i=j\}})$ with $\alpha=0.5$
  and $0.8$, denoted by ${\rm CS}_{0.5}$ and ${\rm CS}_{0.8}$.
  We test the estimation and selection performance of the estimators computed
  with the solvers under each scenario in terms of the {\bf $\ell_2$-error},
  the CPU time, and the number of false positives (${\bf FP}$) and negatives (${\bf FN}$).

  As mentioned by \cite{Fan14a}, the cross-validation is not suitable for
  choosing the best $\nu=\lambda^{-1}$ due to the instability of $\ell_2$-error
  under heavy tails. We choose the best $\lambda$ by
 $
  \lambda_i=\max\big(0.01, \gamma_i\|X\|_1/n \big)
  \ \ {\rm with}\ \ \gamma_i=\gamma_{\rm min}+((i-1)/49)(\gamma_{\rm max}-\gamma_{\rm min})
 $
  by seeking the constant $\gamma$ optimally. Inspired by the choice strategy of $\lambda$
  in \cite{Fan14a}, we choose $\gamma$ based on 100 validation data-sets.
  Specifically, for each of data-sets, we ran a grid search to find the best
  $\gamma$ and then the best $\lambda$
  (with the lowest $\ell_2$-error of $\beta^f$) for the particular setting.
  The optimal $\gamma$ was recorded for each of the 100 validation data-sets.
  We denote by $\gamma_{\rm opt}$ the median of the 100 optimal $\gamma$,
  and use $\lambda=\max\big(0.01, \gamma_{\rm opt}\|X\|_1/n\big)$
  for the simulation studies. The best $\gamma$ is searched from
  $\gamma_1,\ldots,\gamma_{51}$ for $\gamma_{\rm min}=0.08$ and
  $\gamma_{\rm max}=0.38$. Such $\gamma_{\rm max}$ is
  such that $N_{\rm nz}(\beta^{f})$ attains or is close to $0$.

  Table \ref{result-1}-\ref{result-5} report the average {\bf $\ell_2$-error},
  ${\bf FP}$ and ${\bf FN}$ for $\tau\!=0.5$ and $0.75$ based on 100 simulations.
  For almost all test problems, MSCRA\_PPA requires only one-fifteenth of
  the CPU time of  MSCRA\_ADMM and MSCRA\_IPM, and its {\bf $\ell_2$-error}
  is comparable with that of MSCRA\_ADMM and MSCRA\_IPM.
  In addition, for all test problems, the ${\bf FP}$ of MSCRA\_PPA are
  lower than that of MSCRA\_ADMM and MSCRA\_IPM though its ${\bf FN}$ is
  a little higher than that of the latter two methods.
%------------------------------------------------------------------------------Table 1
 \begin{table}[h]\tiny
  \caption{\small Estimation and selection performance of three solvers for $\Sigma_x=I$}
  \label{result-1}
  \centering
 \scalebox{0.7}{
 \begin{tabular}{cl|ccccc|cclcc}
  \hline
  \hline
 $\varepsilon$ & Method & $\gamma_{\rm opt}$ & $L_2$-error & FP  &  FN & Time(s) & $\gamma_{\rm opt}$&$L_2$-error & FP& FN &Time(s)\\
  &  & \multicolumn{4}{c}{$\tau=0.5$} &  & \multicolumn{5}{c}{$\tau=0.75$} \\
\hline
\multirow{3}*{$\mathcal{N}(0,2)$}
& IPM  & 0.104 & 0.444(0.107) & 5.100(2.057) & 0.730(0.468) & 4.221& 0.110 & 0.523(0.157) & 7.840(3.034) & 0.670(0.514) & 5.613\\
& ADMM & 0.104 & 0.446(0.106) & 5.100(2.028) & 0.730(0.468) & 3.033& 0.110 & 0.523(0.158) & 7.760(3.079) & 0.670(0.514) & 3.847\\
& PPA  & 0.116 & 0.446(0.119) & 1.920(1.228) & 0.800(0.426) & 0.138& 0.119 & 0.557(0.188) & 3.810(1.937) & 0.840(0.420) & 0.202\\
\hline
\multirow{3}*{${\rm MN_1}$}
& IPM  & 0.104 & 0.345(0.066) & 5.030(2.007) & 0.410(0.494) & 3.566& 0.110 & 0.377(0.078) & 6.860(2.741) & 0.490(0.502) & 4.168 \\
& ADMM & 0.104 & 0.345(0.067) & 5.150(2.110) & 0.410(0.494) & 2.601& 0.110 & 0.377(0.078) & 6.890(2.723) & 0.480(0.502) & 3.062 \\
& PPA  & 0.110 & 0.347(0.066) & 3.260(1.779) & 0.510(0.502) & 0.131& 0.116 & 0.375(0.061) & 5.050(2.333) & 0.590(0.494) & 0.191 \\
\hline
\multirow{3}*{${\rm MN_2}$}
& IPM  & 0.104 & 1.425(0.361) & 6.750(2.955) & 1.860(0.921) & 5.558& 0.122 & 1.764(0.501) & 4.220(2.377) & 2.660(1.085) & 5.568 \\
& ADMM & 0.104 & 1.427(0.356) & 6.760(3.114) & 1.880(0.902) & 3.829& 0.122 & 1.749(0.512) & 4.270(2.432) & 2.670(1.064) & 3.825\\
& PPA  & 0.116 & 1.347(0.343) & 2.480(1.823) & 2.320(0.994) & 0.133& 0.134 & 1.742(0.537) & 1.790(1.690) & 3.260(1.050) & 0.151 \\
 \hline
\multirow{3}*{ Laplace}
& IPM  & 0.098 & 0.324(0.071) & 7.410(2.775) & 0.220(0.416) & 3.835& 0.110 & 0.364(0.089) & 6.550(2.484) & 0.410(0.494) & 3.789 \\
& ADMM & 0.098 & 0.324(0.070) & 7.450(2.797) & 0.220(0.416) & 2.709& 0.110 & 0.365(0.089) & 6.580(2.458) & 0.400(0.492) & 2.761\\
& PPA  & 0.104 & 0.326(0.073) & 4.700(2.209) & 0.280(0.451) & 0.144& 0.116 & 0.382(0.094) & 4.970(2.158) & 0.480(0.502) & 0.204  \\
\hline
\multirow{3}*{${\rm \sqrt{2}\times t_4}$}
& IPM  & 0.104 & 0.487(0.139) & 5.330(2.301) & 0.760(0.474) & 4.677& 0.110 & 0.649(0.238) & 7.300(2.880) & 0.840(0.507) & 4.907 \\
& ADMM & 0.104 & 0.487(0.138) & 5.360(2.325) & 0.760(0.474) & 3.214& 0.110 & 0.647(0.239) & 7.360(2.812) & 0.840(0.507) & 3.340 \\
& PPA  & 0.110 & 0.502(0.180) & 3.160(1.587) & 0.790(0.478) & 0.157& 0.122 & 0.684(0.286) & 2.970(1.861) & 1.010(0.643) & 0.239  \\
\hline
\multirow{3}*{Cauchy}
& IPM  & 0.098 & 0.536(0.217) & 8.340(3.019) & 0.670(0.533) & 4.954& 0.110 & 0.730(0.364) & 6.740(2.493) & 1.000(0.765) & 5.488 \\
& ADMM & 0.098 & 0.531(0.216) & 8.340(2.879) & 0.680(0.530) & 2.989& 0.110 & 0.729(0.360) & 6.720(2.551) & 1.010(0.759) & 3.404\\
& PPA  & 0.116 & 0.560(0.274) & 1.780(1.203) & 0.910(0.637) & 0.166& 0.125 & 0.816(0.381) & 2.760(1.837) & 1.280(0.792) & 0.243\\
\hline
\hline
\end{tabular}}
\end{table}
 %------------------------------------------------------------------------------table 2
\begin{table}[h]\tiny
 \caption{\small Estimation and selection performance of three solvers for ${\rm AR}_{0.5}$}\label{result-2}
\centering
\scalebox{0.7}{
\begin{tabular}{cl|ccccc|cclcc}
\hline
\hline
 $\varepsilon$ & Method & $\gamma_{\rm opt}$ & $L_2$-error & FP  &  FN & Time(s) & $\gamma_{\rm opt}$&$L_2$-error & FP& FN &Time(s)\\
  &  & \multicolumn{4}{c}{$\tau=0.5$} &  & \multicolumn{5}{c}{$\tau=0.75$} \\
\hline
\multirow{3}*{$\mathcal{N}(0,2)$}
& IPM  & 0.104 & 0.467(0.119) & 4.650(2.148) & 0.710(0.456) & 3.744&  0.110 & 0.609(0.222) & 6.830(2.843) & 0.800(0.512) & 4.312 \\
& ADMM & 0.104 & 0.474(0.120) & 4.620(2.112) & 0.730(0.446) & 2.553&  0.110 & 0.606(0.214) & 6.860(2.853) & 0.800(0.512) & 3.143 \\
& PPA  & 0.110 & 0.491(0.145) & 2.810(1.594) & 0.760(0.474) & 0.133&  0.122 & 0.591(0.199) & 3.020(1.664) & 0.870(0.442) & 0.201  \\
\hline
\multirow{3}*{${\rm MN_1}$}
& IPM  & 0.098 & 0.365(0.074) & 7.020(2.515) & 0.410(0.494) & 3.661& 0.110 & 0.399(0.076) & 6.450(2.679) & 0.570(0.498) & 3.729 \\
& ADMM & 0.098 & 0.367(0.073) & 7.070(2.536) & 0.400(0.492) & 2.746& 0.110 & 0.399(0.076) & 6.500(2.676) & 0.570(0.498) & 2.819 \\
& PPA  & 0.098 & 0.366(0.073) & 7.060(2.566) & 0.410(0.494) & 0.139&  0.122 & 0.423(0.127) & 3.390(1.959) & 0.630(0.485) & 0.180 \\
\hline
\multirow{3}*{${\rm MN_2}$}
& IPM  & 0.104 & 1.383(0.394) & 4.990(2.472) & 2.060(0.930) & 5.168& 0.122 & 1.665(0.434) & 3.640(2.013) & 2.610(0.920) & 5.339\\
& ADMM & 0.104 & 1.379(0.384) & 5.220(2.747) & 2.010(0.937) & 3.446& 0.122 & 1.679(0.420) & 3.670(2.080) & 2.590(0.911) & 3.764\\
& PPA  & 0.119 & 1.365(0.420) & 1.590(1.436) & 2.490(0.937) & 0.101&  0.131 & 1.705(0.512) & 2.100(1.755) & 3.010(0.959) & 0.167  \\
 \hline
\multirow{3}*{ Laplace}
& IPM  & 0.098 & 0.349(0.089) & 7.250(2.564) & 0.360(0.482) & 3.818& 0.110 & 0.381(0.099) & 6.320(2.624) & 0.580(0.496) & 4.513 \\
& ADMM & 0.098 & 0.349(0.089) & 7.250(2.591) & 0.360(0.482) & 2.851& 0.110 & 0.381(0.099) & 6.380(2.666) & 0.570(0.498) & 3.130\\
& PPA  & 0.104 & 0.352(0.088) & 4.600(2.079) & 0.410(0.494) & 0.125&  0.116 & 0.408(0.154) & 4.610(2.188) & 0.480(0.522) & 0.209 \\
\hline
\multirow{3}*{${\rm \sqrt{2}\times t_4}$}
& IPM  & 0.104 & 0.534(0.165) & 4.580(2.142) & 0.830(0.473) & 4.341& 0.110 & 0.734(0.291) & 6.920(2.990) & 1.070(0.573) & 5.785 \\
& ADMM & 0.104 & 0.533(0.165) & 4.590(2.109) & 0.830(0.473) & 3.179& 0.110 & 0.736(0.288) & 6.860(3.052) & 1.070(0.573) & 3.891 \\
& PPA  & 0.110 & 0.542(0.180) & 3.020(1.723) & 0.860(0.472) & 0.129& 0.122 & 0.710(0.283) & 3.240(1.782) & 1.150(0.575) & 0.209 \\
  \hline
\multirow{3}*{Cauchy}
& IPM  & 0.101 & 0.544(0.245) & 6.130(2.232) & 0.820(0.539) & 4.912& 0.104 & 0.695(0.343) & 9.450(3.105) & 0.980(0.681) & 5.948 \\
& ADMM & 0.104 & 0.538(0.258) & 4.890(2.136) & 0.860(0.513) & 2.952& 0.104 & 0.693(0.335) & 9.530(2.883) & 0.950(0.672) & 3.686 \\
& PPA  & 0.116 & 0.561(0.280) & 1.740(1.292) & 0.980(0.603) & 0.169&  0.122 & 0.879(0.473) & 3.270(1.814) & 1.430(0.956) & 0.233  \\
\hline
\hline
 \end{tabular}}
\end{table}
%-----------------------------------------------------------------------------table 3
\begin{table}[h]\tiny
 \caption{\small Estimation and selection performance of three solvers for ${\rm AR}_{0.8}$}
 \label{result-3}
\centering
\scalebox{0.7}{
\begin{tabular}{cl|ccccc|cclcc}
\hline
\hline
 $\varepsilon$ & Method & $\gamma_{\rm opt}$ & $L_2$-error & FP  &  FN & Time(s) & $\gamma_{\rm opt}$&$L_2$-error & FP& FN &Time(s)\\
  &  & \multicolumn{4}{c}{$\tau=0.5$} &  & \multicolumn{5}{c}{$\tau=0.75$} \\
\hline
\multirow{3}*{$\mathcal{N}(0,2)$}
& IPM  & 0.095 & 0.852(0.361) & 7.050(2.504) & 1.260(0.733) & 4.117& 0.098 & 0.986(0.408) & 10.740(3.852) & 1.400(0.804) & 6.170 \\
& ADMM & 0.092 & 0.835(0.336) & 8.800(2.723) & 1.240(0.698) & 3.306& 0.098 & 0.996(0.404) & 10.940(3.961) & 1.400(0.816) & 4.721 \\
& PPA  & 0.110 & 0.910(0.404) & 2.390(1.550) & 1.520(0.731) & 0.111&  0.110 & 0.965(0.387) & 5.140(2.454) & 1.440(0.701) & 0.193 \\
\hline
\multirow{3}*{${\rm MN_1}$}
& IPM  & 0.098 & 0.530(0.208) & 5.300(2.368) & 0.780(0.504) & 3.683& 0.098 & 0.622(0.254) & 9.510(4.036) & 0.850(0.557) & 5.205 \\
& ADMM & 0.092 & 0.519(0.184) & 8.460(2.844) & 0.770(0.489) & 2.933& 0.098 & 0.625(0.261) & 9.630(4.099) & 0.850(0.557) & 3.851 \\
& PPA  & 0.104 & 0.550(0.227) & 3.550(1.977) & 0.800(0.512) & 0.132&  0.110 & 0.644(0.321) & 5.120(2.363) & 1.000(0.682) & 0.184 \\
\hline
\multirow{3}*{${\rm MN_2}$}
& IPM  & 0.104 & 1.742(0.616) & 4.350(2.086) & 2.590(0.889) & 4.362& 0.122 & 2.113(0.641) & 3.120(1.981) & 3.020(0.995) & 5.187 \\
& ADMM & 0.104 & 1.713(0.642) & 4.560(2.203) & 2.500(0.959) & 3.187& 0.116 & 2.139(0.629) & 4.230(2.155) & 2.970(0.958) & 4.269 \\
& PPA  & 0.140 & 1.809(0.649) & 0.820(0.936) & 2.920(0.929) & 0.085&  0.152 & 2.125(0.721) & 0.940(0.886) & 3.290(0.868) & 0.126 \\
 \hline
\multirow{3}*{ Laplace}
& IPM  & 0.098 & 0.520(0.257) & 5.810(2.639) & 0.720(0.637) & 3.767& 0.104 & 0.650(0.375) & 6.980(3.291) & 0.980(0.710) & 3.990 \\
& ADMM & 0.098 & 0.510(0.242) & 5.880(2.626) & 0.710(0.608) & 2.864& 0.104 & 0.645(0.370) & 7.140(3.333) & 0.970(0.703) & 3.180 \\
& PPA  & 0.104 & 0.543(0.267) & 3.780(2.177) & 0.840(0.615) & 0.124&  0.116 & 0.679(0.386) & 3.710(2.176) & 1.150(0.716) & 0.167\\
\hline
\multirow{3}*{${\rm \sqrt{2}\times t_4}$}
& IPM  & 0.095 & 0.955(0.412) & 7.180(2.754) & 1.470(0.658) & 4.517& 0.098 & 1.135(0.465) & 10.250(4.029) & 1.660(0.831) & 5.201\\
& ADMM & 0.092 & 0.934(0.407) & 8.700(3.125) & 1.410(0.653) & 3.236& 0.098 & 1.135(0.485) & 10.400(3.929) & 1.660(0.867) & 3.641\\
& PPA  & 0.110 & 1.009(0.400) & 2.570(1.736) & 1.630(0.646) & 0.118&  0.110 & 1.190(0.542) & 5.450(2.516) & 1.870(0.939) & 0.194\\
  \hline
\multirow{3}*{Cauchy}
& IPM  & 0.104 & 0.891(0.452) & 3.440(2.134) & 1.420(0.684) & 3.853& 0.110 & 1.168(0.573) & 4.970(2.676) & 1.790(0.946) & 4.842\\
& ADMM & 0.098 & 0.850(0.435) & 5.590(2.586) & 1.320(0.723) & 2.672&  0.110 & 1.153(0.549) & 4.950(2.668) & 1.770(0.908) & 2.901\\
& PPA  & 0.116 & 0.962(0.452) & 1.380(1.237) & 1.570(0.700) & 0.157&  0.122 & 1.138(0.570) & 2.920(1.895) & 1.800(0.921) & 0.205 \\
\hline
\hline
 \end{tabular}}
\end{table}
%--------------------------------------------------------------------------table 4
\begin{table}[h]\tiny
 \caption{\small Estimation and selection performance of three solvers for ${\rm CS}_{0.5}$}
 \label{result-4}
\centering
\scalebox{0.7}{
\begin{tabular}{cl|ccccc|cclcc}
\hline
\hline
 $\varepsilon$ & Method & $\gamma_{\rm opt}$ & $L_2$-error & FP  &  FN & Time(s) & $\gamma_{\rm opt}$&$L_2$-error & FP& FN &Time(s)\\
  &  & \multicolumn{4}{c}{$\tau=0.5$} &  & \multicolumn{5}{c}{$\tau=0.75$} \\
\hline
\multirow{3}*{$\mathcal{N}(0,2)$}
& IPM  & 0.092 & 0.683(0.266) & 1.710(1.597) & 1.130(0.464) & 3.819& 0.092 & 0.943(0.366) & 3.810(2.759) & 1.340(0.685) & 4.533 \\
& ADMM & 0.092 & 0.700(0.272) & 1.750(1.459) & 1.140(0.472) & 3.336& 0.098 & 0.962(0.388) & 2.780(2.245) & 1.450(0.757) & 3.761 \\
& PPA  & 0.104 & 0.744(0.282) & 0.650(0.880) & 1.260(0.543) & 0.195&  0.116 & 0.934(0.347) & 1.020(1.163) & 1.580(0.684) & 0.227 \\
\hline
\multirow{3}*{${\rm MN_1}$}
& IPM  & 0.092 & 0.437(0.093) & 1.300(1.243) & 0.810(0.394) & 3.366& 0.098 & 0.505(0.157) & 2.070(1.816) & 0.840(0.368) & 3.687\\
& ADMM & 0.098 & 0.441(0.097) & 0.730(0.777) & 0.820(0.386) & 2.981& 0.098 & 0.506(0.148) & 2.030(1.702) & 0.840(0.368) & 3.475 \\
& PPA  & 0.104 & 0.448(0.107) & 0.350(0.557) & 0.930(0.293) & 0.178&  0.116 & 0.523(0.192) & 0.420(0.867) & 1.020(0.200) & 0.235 \\
\hline
\multirow{3}*{${\rm MN_2}$}
& IPM  & 0.110 & 1.919(0.526) & 2.320(1.999) & 3.090(0.877) & 3.447&  0.122 & 2.253(0.492) & 2.690(1.813) & 3.550(0.744) & 3.224 \\
& ADMM & 0.122 & 1.977(0.490) & 3.210(2.271) & 3.100(0.882) & 3.088& 0.143 & 2.268(0.451) & 3.800(2.094) & 3.530(0.745) & 3.241 \\
& PPA  & 0.152 & 2.016(0.545) & 1.650(1.480) & 3.410(0.866) & 0.117&  0.155 & 2.444(0.579) & 2.600(1.717) & 3.830(0.842) & 0.170 \\
 \hline
\multirow{3}*{ Laplace}
& IPM  & 0.086 & 0.445(0.140) & 2.390(2.117) & 0.810(0.394) & 3.926& 0.098 & 0.568(0.253) & 2.290(2.027) & 1.010(0.414) & 3.868 \\
& ADMM & 0.086 & 0.445(0.139) & 2.520(2.134) & 0.800(0.402) & 3.773& 0.092 & 0.559(0.212) & 3.480(2.552) & 0.920(0.442) & 3.889 \\
& PPA  & 0.098 & 0.469(0.167) & 0.930(1.380) & 0.910(0.379) & 0.181&  0.104 & 0.586(0.279) & 1.570(2.171) & 1.110(0.510) & 0.250 \\
\hline
\multirow{3}*{${\rm \sqrt{2}\times t_4}$}
& IPM  & 0.092 & 0.874(0.352) & 1.960(1.780) & 1.400(0.651) & 4.345&  0.092 & 1.206(0.486) & 4.150(2.724) & 1.710(0.868) & 4.657 \\
& ADMM & 0.086 & 0.905(0.339) & 3.600(2.229) & 1.310(0.598) & 4.071& 0.095 & 1.259(0.448) & 3.760(2.527) & 1.800(0.791) & 3.875 \\
& PPA  & 0.110 & 0.966(0.347) & 0.910(1.215) & 1.610(0.680) & 0.165&  0.116 & 1.172(0.429) & 1.290(1.241) & 1.980(0.816) & 0.216 \\
  \hline
\multirow{3}*{Cauchy}
& IPM  & 0.086 & 0.803(0.377) & 3.050(2.208) & 1.330(0.620) & 5.123& 0.092 & 1.239(0.575) & 3.910(2.016) & 1.900(0.859) & 5.142\\
& ADMM & 0.092 & 0.896(0.436) & 2.270(1.869) & 1.480(0.674) & 3.599&  0.095 & 1.392(0.592) & 4.190(2.608) & 2.040(0.887) & 3.471 \\
& PPA  & 0.101 & 0.880(0.415) & 1.200(1.198) & 1.460(0.658) & 0.278&  0.113 & 1.237(0.502) & 1.470(1.540) & 2.030(0.834) & 0.333 \\
\hline
\hline
 \end{tabular}}
\end{table}
%--------------------------------------------------------------------table 5
\begin{table}[h]\tiny
 \caption{\small Estimation and selection performance of three solvers for ${\rm CS}_{0.8}$}\label{result-5}
\centering
\scalebox{0.7}{
\begin{tabular}{cl|ccccc|cclcc}
\hline
\hline
 $\varepsilon$ & Method & $\gamma_{\rm opt}$ & $L_2$-error & FP  &  FN & Time(s) & $\gamma_{\rm opt}$&$L_2$-error & FP& FN &Time(s)\\
  &  & \multicolumn{4}{c}{$\tau=0.5$} &  & \multicolumn{5}{c}{$\tau=0.75$} \\
\hline
\multirow{3}*{$\mathcal{N}(0,2)$}
& IPM  & 0.092 & 1.572(0.411) & 1.020(1.263) & 2.630(0.761) & 2.879& 0.098 & 1.803(0.469) & 1.480(1.337) & 2.890(0.840) & 2.907 \\
& ADMM & 0.131 & 1.683(0.365) & 2.050(1.617) & 2.820(0.796) & 2.979& 0.116 & 1.923(0.462) & 3.050(2.057) & 2.950(0.903) & 3.077 \\
& PPA  & 0.140 & 1.709(0.423) & 0.650(1.029) & 3.010(0.759) & 0.229&  0.140 & 1.939(0.460) & 1.210(1.233) & 3.220(0.773) & 0.177  \\
\hline
\multirow{3}*{${\rm MN_1}$}
& IPM  & 0.086 & 0.971(0.339) & 0.330(0.604) & 1.750(0.657) & 3.269& 0.086 & 1.118(0.405) & 0.700(0.835) & 1.840(0.762) & 3.355 \\
& ADMM & 0.086 & 0.952(0.363) & 0.910(1.173) & 1.600(0.696) & 3.178& 0.098 & 1.249(0.365) & 1.620(1.523) & 1.980(0.738) & 3.230 \\
& PPA  & 0.110 & 1.128(0.336) & 0.110(0.314) & 2.070(0.655) & 0.202&  0.110 & 1.283(0.392) & 0.460(0.784) & 2.270(0.777) & 0.150 \\
\hline
\multirow{3}*{${\rm MN_2}$}
& IPM  & 0.134 & 3.087(0.643) & 3.890(2.331) & 4.510(0.893) & 2.683&  0.125 & 3.371(0.602) & 4.780(2.729) & 4.910(0.911) & 2.739 \\
& ADMM & 0.137 & 2.897(0.496) & 7.840(3.589) & 4.250(0.903) & 3.432& 0.134 & 3.197(0.477) & 8.640(3.586) & 4.600(0.964) & 3.491 \\
& PPA  & 0.158 & 3.161(0.681) & 3.910(2.708) & 4.680(0.898) & 0.146&  0.149 & 3.507(0.625) & 4.710(2.467) & 5.120(0.868) & 0.117  \\
 \hline
\multirow{3}*{ Laplace}
& IPM  & 0.086 & 1.066(0.409) & 0.380(0.708) & 1.910(0.753) & 3.352& 0.086 & 1.372(0.493) & 1.130(1.284) & 2.350(0.903) & 3.417\\
& ADMM & 0.098 & 1.177(0.441) & 1.350(1.591) & 2.010(0.745) & 3.248& 0.104 & 1.540(0.494) & 2.510(2.267) & 2.510(0.904) & 3.223 \\
& PPA  & 0.110 & 1.254(0.427) & 0.220(0.561) & 2.350(0.783) & 0.192&  0.128 & 1.558(0.496) & 0.710(0.977) & 2.800(0.829) & 0.157  \\
\hline
\multirow{3}*{${\rm \sqrt{2}\times t_4}$}
& IPM  & 0.101 & 1.795(0.435) & 1.300(1.314) & 2.940(0.789) & 2.923& 0.104 & 2.160(0.517) & 2.280(1.735) & 3.230(0.827) & 2.980 \\
& ADMM & 0.128 & 1.889(0.409) & 3.320(2.344) & 2.920(0.813) & 3.215& 0.110 & 2.210(0.462) & 5.180(3.439) & 3.250(0.833) & 3.345 \\
& PPA  & 0.146 & 1.923(0.454) & 1.150(1.507) & 3.200(0.816) & 0.166& 0.152 & 2.261(0.547) & 1.580(1.505) & 3.570(0.807) & 0.137 \\
  \hline
\multirow{3}*{Cauchy}
& IPM  & 0.095 & 1.986(0.618) & 1.560(1.486) & 3.230(0.874) & 3.267& 0.113 & 2.498(0.734) & 2.390(1.933) & 3.850(1.019) & 3.122 \\
& ADMM & 0.128 & 2.181(0.564) & 4.210(2.552) & 3.440(0.903) & 2.870& 0.116 & 2.417(0.587) & 5.240(3.108) & 3.630(1.012) & 2.881 \\
& PPA  & 0.158 & 2.357(0.700) & 1.460(1.374) & 3.800(0.888) & 0.212& 0.134 & 2.667(0.805) & 2.650(2.167) & 4.160(1.080) & 0.178 \\                                                           \hline
\hline
 \end{tabular}}
\end{table}

%-------------------------------------------------------------------------------------------

 \noindent{\bf D.2. Performance on a real data example}

  Now we test the performance of MSCRA\_PPA on a real data set from
  \url{https://www.ncbi.nlm.nih.gov}, which is used by \cite{Scheetz06}
  to illustrate the gene regulation in mammalian eyes
  and to gain insight into genetic variation related to human eyes.
  This microarray data comprises gene expression levels of $31,042$ probes
  on $120$ twelve-week-old laboratory rats. For the $31,042$ probes,
  as suggested by \cite{Scheetz06}, we first carry out the preprocessing
  to obtain $18,986$ probes. Among those probes,
  there is one probe, 1389163$\_$at, corresponding to gene TRIM32, that was found to be
  associated with the Bardet-Biedl syndrome (see \cite{Chiang06}). We are interested in how
  the expression of this gene depends on the expressions of all other 18,985 genes.
  To achieve this goal, we select 3,000 probes with the largest variances and
  then standardize the selected 3,000 probes such that they have mean $0$ and
  standard deviation $1$, as \cite{Gu16} and \cite{Wang12} did.
  Thus, we obtain an $n\times p$ sample matrix $X'$ with $n=120$ and $p=3000$,
  and set $X=[e\ \ X']\in\mathbb{R}^{n\times(p+1)}$.

  Since the previous numerical tests show that MSCRA\_IPM
  and MSCRA\_ADMM have very similar performance, we use MSCRA\_PPA and
  MSCRA\_ADMM with $\tau=0.25, 0.5$ and $0.75$
  to analyze the data on all 120 rats. The parameter $\nu=\lambda^{-1}$ is
  used with $\lambda=\max\big(0.01, \gamma\|X\|_1/n\big)$, where
  $\gamma$ is selected via five-fold cross-validation.
  The results are reported on the third and fourth columns of Table \ref{T-6}.
  We also conduct 50 random partitions on the data,
  each of which has 80 rats in the training set and 40 rats in the validation set.
  We apply MSCRA\_ADMM and MSCRA\_PPA to the training set with $\lambda$ chosen
  as above and evaluate its prediction error on the validation set by calculating
  $\frac{1}{40}\sum_{i\in {\rm validation}}\theta_{\tau}(y_i-\beta_0-x_i^\mathbb{T}\widehat{\beta}^f)$,
  where $x_i^\mathbb{T}$ means the $i$th row of $X'$. The average number of
  selected genes, prediction errors and times over the $50$ partitions
  are listed in the last three columns of Table \ref{T-6}.
  We see that the average number of the genes selected by MSCRA\_PPA is less
  than that of the genes selected by MSCRA\_ADMM, the average prediction error
  of the former is lower than that of the latter, and the average CPU time of
  the former is about one-fifteenth of the latter.

%----------------------------------------------------------------------------------
 \begin{table}[h]\tiny
  \caption{\small Analysis of the microarray data by MSCRA\_PPA and MSCRA\_ADMM}\label{T-6}
  \centering
  \begin{tabular}{cc|cc|ccc}
   \hline
   \hline
   \multirow{2}*{Method}& \multirow{2}*{$\tau$} &\multicolumn{2}{c|}{All data}&  \multicolumn{3}{c}{Random partition} \\
   \cline{5-7}
  & &   $\#$genes &Time(s) & Ave.$\#$genes  &Pre\_error &Time(s)\\
   \hline
  \multirow{3}*{ADMM} & 0.25 & 17 & 3.843   & 17.200(1.807)        &0.050(0.009)   &4.686(0.804)   \\
                      & 0.5  & 27 & 4.141   & 20.960(4.323)        &0.029(0.005)   &3.555(0.496)   \\
                      & 0.75 & 19 & 4.314   & 21.280(2.611)        &0.040(0.005)   &3.534(0.405)     \\
 \hline
 \multirow{3}*{PPA}   & 0.25 & 20 & 0.208    &  16.440(3.721)        &0.023(0.006)   &0.235(0.056) \\
                      & 0.5  & 27 & 0.226    &  20.740(4.237)        &0.029(0.005)   &0.247(0.136)   \\
                      & 0.75 & 17 & 0.181    &  12.500(3.032)        &0.024(0.004)   &0.352(0.068)   \\
 \hline
 \hline
 \end{tabular}
 \end{table}	
 \end{document}